\date{}
\newtheorem{tr}{Theorem}[section]
\newtheorem*{tr*}{Theorem}
\newtheorem{lemma}[tr]{Lemma}
\newtheorem*{st*}{Statement}
\newtheorem*{pr*}{Proposition}
\newtheorem*{cl}{Claim}
\newtheorem{cor}[tr]{Corollary}
\theoremstyle{definition}
\newtheorem{df}[tr]{Definition}
\newtheorem*{df*}{Definition}
\newtheorem*{not*}{Notation}
\newtheorem*{ex}{Example}
\newtheorem{rem}[tr]{Remark}
\newlength{\myevenmargin}
\def\id{{ id}}
\def\bb#1{\mathbb #1}
\def\cal#1{\mathcal #1}
\def\ra{\rightarrow}
\def\xra{\xrightarrow}
\def\mto{\mapsto}
\def\exp{{\rm exp}}
\def\eqdef{=:}
\def\defeq{:=}
\DeclareMathOperator\Image{Im}
\renewcommand{\Im}{\Image}
\DeclareMathOperator\Ker{Ker}
\DeclareMathOperator\End{End}
\DeclareMathOperator\pr{pr}
 \DeclareMathOperator\idd{id}
\DeclareMathOperator\GL{GL}
\DeclareMathOperator\Pic{Pic}
\DeclareMathOperator\Deck{Deck}
\DeclareMathOperator\diag{diag}
\def\Vect{{\sf Vect}}
\renewcommand{\id}{\idd}
\def\ten{\otimes}
\def\XX{\tilde X}
\def\xx{\tilde x}
\let\bigwedge\Lambda
\title{Vector bundles on elliptic curves and factors of automorphy}
\author{Oleksandr Iena}
\address{SISSA\\
via Bonomea, 265\\
34136 Trieste\\
Italy}
\email{iena@sissa.it}
\begin{document}
\begin{abstract}
We translate the Atiyah's results on classification of vector bundles on elliptic curves to the language of factors of automorphy.
\end{abstract}
\maketitle

\tableofcontents
\section{Introduction}
\subsection{Motivation}
The problem of classification of vector bundles over an elliptic curve was considered and  completely solved by Atiyah in~\cite{Atiyah}.

For a group $\Gamma$ acting on a complex manifold $Y$, an $r$-dimensional factor of automorphy  is a holomorphic function $f:\Gamma\times Y\ra \GL_r(\bb C)$ satisfying $f(\lambda\mu, y)=f(\lambda, \mu y)f(\mu, y)$. 
Two factors of automorphy $f$  and $f'$ are equivalent if
there exists a holomorphic function $h:Y\ra \GL_r(\bb C)$ such that
\(
h(\lambda y)f(\lambda, y)=f'(\lambda, y)h(y).
\)

Given a complex manifold $X$  and the universal covering $Y\xra{p} X$, let $\Gamma$ be the fundamental group of $X$ acting naturally on $Y$ by deck transformations. Then there is a one-to-one correspondence between equivalence classes of $r$-dimensional factors of automorphy and isomorphism classes of vector bundles on $X$ with trivial pull-back along $p$. In particular, if $Y$ does not possess any non-trivial vector bundles, one obtains a one-to-one correspondence between  equivalence classes of $r$-dimensional factors of automorphy and isomorphism classes of vector bundles on $X$. In particular this is the case for complex tori.

Since it is known that  one-dimensional complex tori correspond to elliptic curves and since
the classification of holomorphic
vector bundles on a projective variety over $\bb C$ is equivalent to the
classification of algebraic vector bundles (cf.~\cite{Serre}), it is possible to formulate the Atiyah's results in the language of factors of automorphy.

This paper is a shortened version of the diploma thesis~\cite{MyGermanDiplom} and aims to give an accessible reference to the proofs of some results definitely known to the experts but still unpublished or difficult to find. The statement of the main result of this manuscript, Theorem~\ref{classmatr}, coincides with the statement of Proposition 1 from~\cite{Pol}, which was given without any proof.

The author thanks Igor Burban, Bernd Kreu\ss ler,  and G\"unther Trautmann, who motivated him to prepare this manuscript.
\subsection{Structure of the paper}
In Section~\ref{section:basic correspondence} we establish a correspondence between vector bundles and factors of automorphy. 
Section~\ref{section:properties} deals with properties of factors of automorphy, in particular we discuss  a correspondence  between operations on vector bundles and operations on factors of automorphy. From  Section~\ref{section:VB on tori} on we restrict ourselves to the case of vector bundles on complex tori. It is shown in Theorem~\ref{tr:A(1)} that to define a vector bundle of rank $r$ on a complex one-dimensional torus is the same as to fix a holomorphic function $\bb C^*\ra \GL_r(\bb C)$. In Section~\ref{section:classification} we first present in Theorem~\ref{degzerobundle} a classification of indecomposable vector bundles of degree zero, using this we give then in Theorem~\ref{classmatr} a complete classification of indecomposable vector bundles of fixed rank and degree in terms of factors of automorphy.

\subsection{Notations and conventions}
Following Atiyah's paper~\cite{Atiyah} we denote by $\cal E(r,d)=\cal E_X(r,d)$ the
set of isomorphism classes of indecomposable vector bundles over $X$ of rank
$r$ and degree $d$.
For a vector bundle $E$ we usually denote  the corresponding locally free sheaf of its sections by $\cal E$.
By $\Vect$ we denote the category of finite dimensional vector
spaces.  For a divisor $D$ we denote by $[D]$ the corresponding line bundle.

\section{Correspondence between vector bundles and factors of automorphy}
\label{section:basic correspondence}
\def\T{({\bf  T})\ }
Let $X$ be a  complex manifold
and let $p:Y\ra X$ be a covering of $X$. Let $\Gamma\subset
\Deck(Y/X)$  be a subgroup in the group of deck transformations
$\Deck(Y/X)$ such that for any two points $y_1$ and $y_2$ with
$p(y_1)=p(y_2)$ there exists an  element $\gamma\in \Gamma$ such that
$\gamma(y_1)=y_2$. In other words, $\Gamma$ acts transitively in each
fiber. We call this property \T.
\begin{rem}
Note that for any two points $y_1$ and $y_2$ there can be only one
$\gamma \in \Deck(Y/X)$ with $\gamma(y_1)=y_2$ (see~\cite{Forst}, Satz
{\bf 4.8}). Therefore, $\Gamma=\Deck(Y/X)$ and the property \T  simply
means that
$p: Y\ra X$ is a normal (Galois) covering.
\end{rem}

We have an action of $\Gamma$ on $Y$:
\[
\Gamma\times Y\ra Y, \quad  y\mto \gamma(y)\eqdef \gamma y.
\]



\begin{df}
A holomorphic function $f:\Gamma\times Y \ra \GL_r(\bb C)$, $r\in \bb
N$ is called
an $r$-dimensional factor of automorphy if it satisfies the relation
\[
f(\lambda\mu,y)=f(\lambda,\mu y)f(\mu,y).
\]
Denote by $Z^1(\Gamma,r)$ the set of all $r$-dimensional factors of
automorphy.
\end{df}
We introduce  the relation $\sim$ on $Z^1(\Gamma, r)$. We say that $f$ is
equivalent to $f'$ if
there exists a holomorphic function $h:Y\ra \GL_r(\bb C)$ such that
\[
h(\lambda y)f(\lambda, y)=f'(\lambda, y)h(y).
\]
We write in this case $f\sim f'$.
\begin{cl}
The relation $\sim$ is  an equivalence relation on $Z^1(\Gamma, r)$.
\end{cl}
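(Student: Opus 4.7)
The plan is to verify the three axioms of an equivalence relation (reflexivity, symmetry, transitivity) directly from the definition, using the constant function, pointwise inverse, and pointwise product of holomorphic functions $Y \to \GL_r(\bb C)$ as the required intertwiners. The only nontrivial observation is that holomorphicity is preserved under these operations, and that matrix inversion makes sense because the values lie in $\GL_r(\bb C)$ (so $\det h(y)$ is a nowhere-vanishing holomorphic function).

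For reflexivity, I would take $h \colon Y \to \GL_r(\bb C)$ to be the constant function with value the identity matrix $I_r$. Then trivially $h(\lambda y) f(\lambda, y) = f(\lambda, y) = f(\lambda, y) h(y)$, showing $f \sim f$.

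For symmetry, suppose $f \sim f'$ via $h$, so $h(\lambda y) f(\lambda, y) = f'(\lambda, y) h(y)$. Since $h$ takes values in $\GL_r(\bb C)$, the map $y \mapsto h(y)^{-1}$ is a well-defined holomorphic function $Y \to \GL_r(\bb C)$. Multiplying the given relation on the left by $h(\lambda y)^{-1}$ and on the right by $h(y)^{-1}$ yields $f(\lambda, y) h(y)^{-1} = h(\lambda y)^{-1} f'(\lambda, y)$, which is exactly the equivalence $f' \sim f$ witnessed by $h^{-1}$.

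For transitivity, suppose $f \sim f'$ via $h_1$ and $f' \sim f''$ via $h_2$. Set $h(y) \defeq h_2(y) h_1(y)$; this is a holomorphic function $Y \to \GL_r(\bb C)$. Then
\[
h(\lambda y) f(\lambda, y) = h_2(\lambda y) \bigl(h_1(\lambda y) f(\lambda, y)\bigr) = h_2(\lambda y) f'(\lambda, y) h_1(y) = f''(\lambda, y) h_2(y) h_1(y) = f''(\lambda, y) h(y),
\]
so $f \sim f''$. There is no real obstacle here; the claim is purely formal, and the only point worth flagging is the use of $\GL_r(\bb C)$-valuedness to ensure $h^{-1}$ is again a legal intertwiner.
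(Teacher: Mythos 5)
Your proof is correct and is exactly the ``straightforward verification'' the paper leaves to the reader: reflexivity via the constant identity, symmetry via $h^{-1}$ (holomorphic since $h$ is $\GL_r(\bb C)$-valued), and transitivity via the pointwise product $h_2h_1$. Nothing further is needed.
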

\begin{proof}
Straightforward verifications.
\end{proof}

We denote the set of equivalence classes of $Z^1(\Gamma, r)$ with
respect to $\sim$ by $H^1(\Gamma, r)$.

Consider $f \in Z^1(\Gamma, r)$ and a trivial vector bundle
$Y\times\bb C^r\ra Y$. Define a holomorphic action of $\Gamma$
on $Y\times\bb C^r$:
\[
\Gamma\times Y\times\bb C^r\ra Y\times\bb C^r,\quad
(\lambda, y, v)\mto (\lambda y, f(\lambda, y)v)\eqdef\lambda(y, v).
\]
Denote $E(f)=Y\times\bb C^r/\Gamma$ and note that for two equivalent
points $(y, v)\sim_\Gamma (y', v')$ with respect to the action of
$\Gamma$ on $Y\times \bb C^r$ it follows that $p(y)=p(y')$. In fact,
$(y, v)\sim_\Gamma (y', v')$ implies in particular that $y=\gamma y'$
for some $\gamma\in \Gamma$ and by the definition of deck
transformations $p(y)=p(\gamma y')=p(y')$. Hence the
projection $Y\times\bb C^r\ra Y$ induces the map
\[
\pi:E(f)\ra X,\quad 
[y, v]\mto p(y).
\]
We equip $E(f)$ with the quotient topology.
\begin{tr}
$E(f)$ inherits a complex structure from $Y\times \bb C^r$ and the
  map $\pi:E(f)\ra X$ is a holomorphic vector bundle on $X$.\label{AutoB}
\end{tr}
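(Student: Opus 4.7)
The plan is to realise $E(f)$ as a quotient complex manifold of $Y\times \bb C^r$, then exhibit explicit bundle charts for $\pi$ using sheets of the covering $p\colon Y\to X$.

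First I would check that the prescribed map
\[
\Gamma\times (Y\times \bb C^r)\to Y\times \bb C^r,\quad (\lambda,(y,v))\mto (\lambda y,\, f(\lambda,y)v),
\]
is a genuine left action. Associativity is exactly the factor-of-automorphy relation $f(\lambda\mu,y)=f(\lambda,\mu y)f(\mu,y)$. Plugging $\lambda=\mu=e$ into this relation gives $f(e,y)=f(e,y)^2$, hence $f(e,y)=\id$, so the identity acts trivially. Because $p$ is a (normal) covering, the $\Gamma$-action on $Y$ is free and properly discontinuous: every $y\in Y$ has a neighbourhood $V$ with $\gamma V\cap V=\emptyset$ for all $\gamma\neq e$. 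The induced action on $Y\times \bb C^r$ is therefore free as well, and $V\times \bb C^r$ is a correspondingly disjoint open neighbourhood. Consequently the quotient map $q\colon Y\times \bb C^r\to E(f)$ is itself a covering, and $E(f)$ inherits a unique complex structure making $q$ a local biholomorphism.

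Next I would build local trivialisations of $\pi$. For $x\in X$, choose an open $U\ni x$ with $p^{-1}(U)=\bigsqcup_{\gamma\in \Gamma}\gamma V$, where $V$ is a sheet mapped biholomorphically onto $U$ by $p|_V$, with inverse section $s\colon U\to V$. The map
\[
\Phi\colon U\times \bb C^r\to \pi^{-1}(U),\quad (u,v)\mto [s(u),v],
\]
is bijective, because every class in $\pi^{-1}(U)$ has a unique representative in $V\times \bb C^r$. It is holomorphic as the composition $q\circ(s\times \id)$, and a straightforward computation gives a local inverse in terms of a projection to $U$ followed by multiplication by a value of $f$, which is holomorphic as well. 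Since $\pi\circ \Phi$ is the projection $U\times \bb C^r\to U$, the map $\pi$ is holomorphic.

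Finally I would verify the vector bundle structure. Two choices of sheet $V_1,V_2$ over an overlap $U_1\cap U_2$ are related by the unique $\gamma\in \Gamma$ with $V_2=\gamma V_1$; the identity $[s_1(u),v]=[\gamma s_1(u),\, f(\gamma,s_1(u))v]=[s_2(u),\, f(\gamma,s_1(u))v]$ then gives the transition
\[
\Phi_2^{-1}\!\circ \Phi_1\colon (u,v)\mto \bigl(u,\; f(\gamma,s_1(u))\,v\bigr),
\]
which is $\bb C$-linear in $v$ and holomorphic in $u$. Hence $\pi\colon E(f)\to X$ is a rank-$r$ holomorphic vector bundle whose transition cocycle is supplied by $f$. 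The only real obstacle is bookkeeping: one has to keep track of how a representative shifts between sheets of the cover and check that the linear structure on each fibre is independent of the chosen sheet. Once this is organised, holomorphy of every map in sight reduces to holomorphy of $f$ together with holomorphy of local sections of $p$.
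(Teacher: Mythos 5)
Your proposal is correct and follows essentially the same route as the paper: local trivializations of $\pi$ over evenly covered neighbourhoods of $X$, with the transition cocycle $f(\gamma, s_1(u))$ read off from the unique deck transformation relating two sheets. The only (harmless) organizational difference is that you first endow $E(f)$ with a complex structure abstractly, via the free and properly discontinuous $\Gamma$-action on $Y\times \bb C^r$, whereas the paper obtains the complex structure directly by declaring the trivialization maps $\varphi_U$ to be charts; both yield the same atlas and the same holomorphic cocycle.
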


\begin{proof}
First we prove that $\pi$ is a topological vector bundle. Clearly
$\pi$ is a continuous map.
Consider the commutative diagram
\[
\xymatrix
{
Y\times\bb C^r\ar[d]\ar[r]&E(f)\ar[d]^\pi\\
Y\ar[r]^p&X.
}
\]
Let $x$ be a point of $X$. Since $p$ is a covering, one can
choose an open
neighbourhood $U$ of $x$ such that its preimage is a
disjoint union of open sets biholomorphic to U, i. e.,
$p^{-1}(U)=\bigsqcup\limits_{i\in \cal I} V_i$, $p_i\defeq p|_{V_i}:V_i\ra U$ is a biholomorphism for each $i\in
\cal I$. For each pair $(i,j)\in \cal I\times\cal I$ there exists a unique
$\lambda_{ij}\in \Gamma$ such that
$\lambda_{ij}p_j^{-1}(x)=p_i^{-1}(x)$ for all $x\in U$. This follows
from the property \T.

We have  $\pi^{-1}(U)=((\bigsqcup\limits_{i \in \cal I}V_i)\times
\bb C^r)/\Gamma$.

Choose  some $i_U\in \cal I$. Consider the holomorphic map
\[
\varphi_U':(\bigsqcup\limits_{i\in \cal I}
V_i)\times \bb C^r\ra U\times \bb C^r,\quad
(y_i, v)\mto (p(y_i), f(\lambda_{i_Ui},  y_i)v),\  y_i\in V_i.
\]

Suppose that $( y_i, v')\sim_\Gamma( y_j, v)$. This means
\[
( y_i, v')=\lambda_{ij}( y_j, v)=(\lambda_{ij} y_j, f(\lambda_{ij}, y_j)v).
\]
 Therefore,
\begin{align*}
\varphi_U'( y_i, v')=&(p( y_i), f(\lambda_{i_Ui},
 y_i)v')=(p(\lambda_{ij} y_j), f(\lambda_{i_Ui},
\lambda_{ij} y_j)f(\lambda_{ij}, y_j)v)=\\&(p( y_j),
f(\lambda_{i_Uj}, y_j)v)=\varphi_U'( y_j, v).
\end{align*}
Thus
$\varphi_U'$ factorizes through $((\bigsqcup\limits_{i\in \cal  I}
V_i)\times \bb C^r)/\Gamma$, i. e., the map
\[
\varphi_U:((\bigsqcup\limits_{i\in \cal I}
V_i)\times \bb C^r)/\Gamma\ra U\times \bb C^r,\quad
[( y_i, v)]\mto (p( y_i), f(\lambda_{i_Ui},  y_i)v),\  y_i\in V_i
\]
is well-defined and continuous. We claim that $\varphi_U$ is bijective.

Suppose $\varphi_U([( y_i, v')])=\varphi_U([( y_j, v)])$, where
$y_i\in V_i$, $y_j\in V_j$. By definition this is
equivalent to $(p( y_i), f(\lambda_{i_Ui},  y_i)v')=(p( y_j),
f(\lambda_{i_Uj},  y_j)v)$, which means $ y_i=\lambda_{ij} y_j$ and
\begin{align*}
f(\lambda_{i_Ui},
\lambda_{ij} y_j)v'=&f(\lambda_{i_Ui}, y_i)v'=f(\lambda_{i_Uj}, y_j)v=\\
&f(\lambda_{i_Ui}\lambda_{ij},
 y_j)v=f(\lambda_{i_Ui},\lambda_{ij} y_j)f(\lambda_{ij}, y_j)v.
\end{align*}
We conclude  $v'=f(\lambda_{ij}, y_j)v$ and $[( y_i, v')]=[( y_j, v)]$,
which means injectivity of $\varphi_U$.

At the same time for each
element $(y,v)\in U\times\bb C^r$ one has
\begin{align*}
\varphi&_U([(p_{i}^{-1}(y),
f(\lambda_{i_Ui},  p_{i}^{-1}(y))^{-1}v)])=\\&(pp_{i}^{-1}(y),
f(\lambda_{i_Ui}, p_i^{-1}(y))f(\lambda_{i_Ui},
p_{i}^{-1}(y))^{-1}v)=(y, v),
\end{align*} i. e., $\varphi_U$ is surjective and we obtain that
$\varphi_U$ is a  bijective map.

This means, that $\varphi_U$ is a trivialization for $U$ and that
$\pi:E(f)\ra X$ is a (continuous) vector bundle.
If $U$ and $V$ are two neighbourhoods of $X$ defined as above for
which $E(f)|_{U}$, $E(f)|_{V}$ are trivial, then the corresponding
transition function is
\[
\varphi_{U}\varphi_{V}^{-1}:(U\cap V)\times\bb C^r\ra (U\cap
V)\times \bb C^r, \quad
(x,v)\mto(x, g_{UV}(x)v),
\]
where $g_{UV}:U\cap V\ra\GL_r(\bb C)$ is a cocycle defining $E(f)$.
But from the construction of $\varphi_U$  it follows
that
\[
g_{UV}(x)=f(\lambda_{i_{U}i_{V}},
p_{i_{V}}^{-1}(x) ).
\]
Therefore, $g_{UV}$ is a
holomorphic map,  hence  $\varphi_{U}\varphi_{V}^{-1}$ is also a
holomorphic map. Thus the maps $\varphi_U$ give $E(f)$ a complex
structure. Since $\pi$ is locally a projection, one sees that
$\pi$ is a holomorphic map.
\end{proof}
\begin{rem}
Note that $p^*E(f)$ is isomorphic to $ Y\times\bb C^r$. An isomorphism
can be given by the map
\[
p^*E(f)\ra  Y\times \bb C^r,\quad
( y, [\tilde{y}, v])\mto ( y,f(\lambda, \tilde y)v),\quad \lambda\tilde y= y.
\]
\end{rem}
Now we have the map from $Z^1(\Gamma, r)$ to the set
$K_r=\{[E]\ |\ p^*(E)\simeq  Y\times\bb C^r\}$  of isomorphism classes of
vector bundles of rank $r$ over $X$
with trivial pull back with respect to $p$.
\[
\phi': Z^1(\Gamma, r)\ra K_r;\quad f\mto [E(f)].
\]
\begin{tr}\label{automain}
Let $K_r$ denote the set of  isomorphism classes  of vector bundles
of rank $r$
on $X$ with trivial pull back with respect to $p$. Then the map
\[
H^1(\Gamma, r)\ra K_r,\quad [f]\mto [E(f)]
\]
is a bijection.
\end{tr}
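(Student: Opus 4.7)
The plan is to verify three things: that $[f]\mapsto[E(f)]$ descends to a well-defined map from equivalence classes, that this map is injective, and that it is surjective. The unifying principle is that a trivialization of a pullback bundle $p^*E$ transports the canonical $\Gamma$-action on $p^*E$ into a cocycle-shaped action on $Y\times\bb C^r$, so factors of automorphy and vector bundles in $K_r$ really are two descriptions of the same data.

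For well-definedness, suppose $f\sim f'$ via a holomorphic $h:Y\ra\GL_r(\bb C)$ satisfying $h(\lambda y)f(\lambda,y)=f'(\lambda,y)h(y)$. The map $\tilde\Phi:Y\times\bb C^r\ra Y\times\bb C^r$, $(y,v)\mapsto (y,h(y)v)$, intertwines the $\Gamma$-actions defined by $f$ and by $f'$: applying the $f$-action and then $\tilde\Phi$ gives $(\lambda y,h(\lambda y)f(\lambda,y)v)$, which by the equivalence relation equals $(\lambda y,f'(\lambda,y)h(y)v)$, i.e.\ $\tilde\Phi$ followed by the $f'$-action. Hence $\tilde\Phi$ descends to a holomorphic vector bundle isomorphism $E(f)\ra E(f')$.

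For injectivity, let $\Phi:E(f)\ra E(f')$ be a bundle isomorphism. Pulling back along $p$ and composing with the canonical identifications $p^*E(f)\simeq Y\times\bb C^r\simeq p^*E(f')$ from the remark following Theorem~\ref{AutoB} yields an automorphism of $Y\times\bb C^r$ over $Y$, necessarily of the form $(y,v)\mapsto(y,h(y)v)$ for a unique holomorphic $h:Y\ra\GL_r(\bb C)$. Since $p^*\Phi$ is automatically $\Gamma$-equivariant with respect to the two induced actions, expressing equivariance in coordinates recovers precisely $h(\lambda y)f(\lambda,y)=f'(\lambda,y)h(y)$, so $f\sim f'$.

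For surjectivity, start with $[E]\in K_r$ and a trivialization $\psi:p^*E\ra Y\times\bb C^r$. Because $p\circ\gamma=p$ for every $\gamma\in\Gamma$, the bundle $p^*E$ inherits a canonical lift of the $\Gamma$-action on $Y$. Transporting this action through $\psi$ produces a holomorphic $\Gamma$-action on $Y\times\bb C^r$ that covers the action on $Y$ and is fiberwise linear, hence has the form $(y,v)\mapsto(\gamma y,f(\gamma,y)v)$ with $f(\gamma,y)\defeq\psi_{\gamma y}\circ\psi_y^{-1}\in\GL_r(\bb C)$, where $\psi_y$ denotes the restriction of $\psi$ to the fiber over $y$. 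The identity $\psi_{\lambda\mu y}\circ\psi_y^{-1}=(\psi_{\lambda\mu y}\circ\psi_{\mu y}^{-1})\circ(\psi_{\mu y}\circ\psi_y^{-1})$ is the cocycle relation, so $f\in Z^1(\Gamma,r)$. The composition $Y\times\bb C^r\xra{\psi^{-1}} p^*E\ra E$ is $\Gamma$-invariant by construction and descends to a bundle isomorphism $E(f)\ra E$, showing that $[E]$ lies in the image.

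The step that requires the most care is surjectivity: one must check that the cocycle $f$ just produced is holomorphic jointly in $\gamma$ and $y$ (holomorphy in $y$ is immediate from holomorphy of $\psi$, and the ``joint'' statement is automatic since $\Gamma$ is discrete, so each $\gamma$-slice is a holomorphic function on $Y$), and that the induced continuous bijection $E(f)\ra E$ is a biholomorphism of vector bundles, not merely a set-theoretic isomorphism. Both points follow from the local triviality of $p$ and the fact that, locally, $\psi$ identifies $p^*E$ with a product, reducing the verification to the explicit trivializations $\varphi_U$ built in the proof of Theorem~\ref{AutoB}.
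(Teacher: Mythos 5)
Your proof is correct and follows essentially the same route as the paper: both arguments hinge on transporting the canonical $\Gamma$-action on $p^*E$ through a trivialization to produce the factor of automorphy, and on using the intertwining function $h$ to pass between equivalent factors and isomorphic bundles. The only cosmetic difference is that you argue well-definedness with a global intertwiner on $Y\times\bb C^r$ descending to the quotient, where the paper compares the induced cocycles $g_{UV}$ directly.
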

\begin{proof} This proof generalizes the proof
    from~\cite{Lange} given only for line bundles.

Consider the map $\phi':Z^1(\Gamma, r)\ra K_r$ and let $f$ and $f'$ be two
equivalent $r$-dimensional factors of automorphy. It means that there exists a holomorphic
function $h: Y\ra \GL_r(\bb C)$ such that
\[
f'(\lambda, y)=h(\lambda y)f(\lambda,
 y)h( y)^{-1} .
\]
Therefore, for two neighbourhoods $U$, $V$ constructed as
above we have the following relation for cocycles corresponding to $f$ and $f'$.
\begin{align*}
g'_{UV}(x)=&f'(\lambda_{UV},p^{-1}_{i_{V}}(x))= h(\lambda_{UV}p^{-1}_{i_{V}}(x)) f(\lambda_{{U}{V}},
p_{i_{V}}^{-1}(x))
h( p^{-1}_{i_{V}}(x) )^{-1} =\\ &h(p^{-1}_{i_{U}}(x))g_{UV}(x)
h( p^{-1}_{i_{V}}(x) )^{-1} =  h_{U}(x) g_{UV}(x)
h_{V}(x)^{-1},
\end{align*}
where $\lambda_{UV}=\lambda_{i_U i_V}$,   $h_{U}(x)= h( p^{-1}_{i_{U}}(x)
)$  and  $h_{V}(x)= h( p^{-1}_{i_{V}}(x)
)$. We obtained
\[
g'_{UV}= h_{U} g_{UV}
h_{U}^{-1},
\]
which is exactly the condition for two cocycles to
define isomorphic vector bundles. Therefore,  $E(f)\simeq E(f')$ and
it means that $\phi'$ factorizes through $H^1(\Gamma, r)$, i. e., the
map
\[
\phi: H^1(\Gamma, r)\ra K_r;\quad [f]\mto [E(f)]
\]
is well-defined.

It remains to construct the inverse map.
Suppose $E\in K_r$, in other words $p^*(E)$ is the trivial  bundle of
rank $r$
over  $Y$. Let $\alpha:p^*E\ra  Y\times\bb C^r$ be a
trivialization. The action of $\Gamma$ on $ Y$ induces a holomorphic
action of $\Gamma$ on $p^*E$ :
\[
\lambda( y,e)\defeq(\lambda y,e) \text{ for $( y,e)\in p^*E=Y\times_X E$.}
\]
 Via $\alpha$ we get for every
$\lambda\in \Gamma$ an automorphism $\psi_\lambda$ of the trivial
bundle $ Y\times \bb C^r$. Clearly  $\psi_\lambda$ should be of the
form
\[
\psi_\lambda
( y,v)=(\lambda y, f(\lambda, y)v),
\]
where $f:\Gamma\times Y\ra\GL_r(\bb C)$ is a holomorphic map. The equation for the action
$\psi_{\lambda\mu}=\psi_\lambda\psi_\mu$ implies that f should be an
$r$-dimensional factor of automorphy.

Suppose $\alpha'$ is  an another trivialization of $p^*E$. Then there
exists a holomorphic map $h: Y\ra \GL_r(\bb C)$ such that
$\alpha'\alpha^{-1}( y,v)=( y, h( y)v)$. Let $f'$ be a factor of
automorphy corresponding to $\alpha'$. From
\begin{align*}
(\lambda y,f'(\lambda, y)v)=&\psi_\lambda'( y,v)=\alpha'\lambda\alpha'^{-1}( y,v)=\alpha'\alpha^{-1}\alpha\lambda\alpha^{-1}\alpha\alpha'^{-1}( y,v)=\\&\alpha'\alpha^{-1}\psi_\lambda(\alpha'\alpha^{-1})^{-1}( y,v)=\alpha'\alpha^{-1}\psi_\lambda( y,h( y)^{-1}v)=\\&\alpha'\alpha^{-1}(\lambda y,f(\lambda, y)h( y)^{-1}v)=(\lambda y,h(\lambda y)f(\lambda, y)h( y)^{-1}),
\end{align*} we obtain $f'(\lambda, y)=h(\lambda y)f(\lambda, y)h( y)^{-1}$. The
last means that $[f]=[f']$, in other words,
the class of a factor of automorphy
in $H^1(\Gamma, r)$ does not depend on the trivialization and we get a
map $K_r\ra H^1(\Gamma, r)$. This map is the inverse of $\phi$.
\end{proof}

Let $X$ be a connected complex manifold, let $p:\XX\ra X$ be a
universal covering of $X$, and let
$\Gamma=\Deck(Y/X)$.  Since universal
coverings are normal coverings, $\Gamma$ satisfies the property \T (see~\cite[Satz~5.6]{Forst}). Moreover, $\Gamma$ is isomorphic to the
fundamental group $\pi_1(X)$ of $X$ (see~\cite[Satz~5.6]{Forst}).
An isomorphism is given as follows.

Fix $x_0\in X$ and $\xx_0\in \XX$ with $p(\xx_0)=x_0$. We define a map
\[
\Phi:\Deck(\XX/ X)\ra \pi_1(X, x_0)
\]
as follows.  Let $\sigma\in \Deck(\XX/X)$ and $v:[0;1]\ra \XX$ be a
curve with $v(0)=\xx_0$ and $v(1)=\sigma(\xx_0)$. Then a curve
\[
pv:[0;1]\ra X, \quad t\mto pv(t)
\] is such that $pv(0)=pv(1)=x_0$. Define $\Phi(\sigma)\defeq [pv]$,
where $[pv]$ denotes a homotopy class of $pv$. The map $\Phi$ is well
defined and is an isomorphism of groups.

So we can identify $\Gamma$ with $\pi_1(X)$. Therefore, we have an
action of $\pi_1(X)$ on $\XX$ by deck transformations.

Consider an element $[w]\in \pi_1(X,x_0)$ represented by a path
$w:[0;1]\ra X$.
We denote $\sigma=\Phi^{-1}([w])$.
Consider any $\xx_0\in X$ such that $p(\xx_0)=w(0)=w(1)$, then the path  $w$ can be
uniquely lifted to the path
\[
v:[0;1]\ra \XX
\]
with $v(0)=\xx_0$ (see~\cite{Forst}, Satz {\bf 4.14}). Denote
$\xx_1=v(1)$. Then $\sigma$ is a unique element in $\Deck(\XX/X)$ such
that $\sigma(\xx_0)=\xx_1$.
This gives a description of the action of $\pi_1(X,x_0)$ on $\XX$.

Now we have a corollary to Theorem~\ref{automain}.
\begin{cor}
Let $X$ be a connected complex manifold, let $p:\XX \ra X$ be the
universal covering, let $\Gamma$ be the fundamental group of $X$
naturally acting on $\XX$ by deck transformations. As above, $H^1(\Gamma, r)$
denotes the set of equivalence classes of $r$-dimensional factors of
automorphy
\[
\Gamma\times\XX\ra \GL_r(\bb C).
\]
Then there is a bijection
\[
H^1(\Gamma, r)\ra K_r,\quad [f]\mto E(f),
\]
where $K_r$ denotes the set  of isomorphism classes of vector bundles
of rank $r$
on $X$ with trivial pull back with respect to $p$.
\end{cor}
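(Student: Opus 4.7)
The plan is to deduce the corollary as a direct specialization of Theorem~\ref{automain}. The only thing to check is that the hypotheses of Theorem~\ref{automain} are satisfied in the universal covering setting, namely that $p:\tilde X\to X$ is a covering and that the subgroup $\Gamma\subset\Deck(\tilde X/X)$ under consideration satisfies property \T.

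First, I would recall that a universal covering is, in particular, a covering map, so the first hypothesis holds trivially. For property \T, the key input is the fact cited in the excerpt from~\cite[Satz~5.6]{Forst}: the universal covering is a normal (Galois) covering, so $\Deck(\tilde X/X)$ acts transitively on each fiber. Since the remark following the definition of \T\ observes that \T\ is equivalent to the covering being normal with $\Gamma=\Deck(\tilde X/X)$, taking the full deck transformation group makes property \T\ automatic.

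Next, I would invoke the identification $\Gamma=\Deck(\tilde X/X)\cong\pi_1(X,x_0)$ provided by the isomorphism $\Phi$ described just above the corollary. Under this identification, the natural action of $\pi_1(X)$ on $\tilde X$ agrees with the action by deck transformations, so the set $Z^1(\pi_1(X),r)$ of factors of automorphy and its quotient $H^1(\pi_1(X),r)$ coincide with $Z^1(\Gamma,r)$ and $H^1(\Gamma,r)$ respectively.

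Having verified the hypotheses, the bijection $H^1(\Gamma,r)\to K_r$, $[f]\mapsto [E(f)]$, is exactly the statement of Theorem~\ref{automain} applied to the universal covering. There is no genuine obstacle here; the content of the corollary is merely to record that the construction of $E(f)$ and the bijection of Theorem~\ref{automain} specialize cleanly to the universal covering, so that factors of automorphy for $\pi_1(X)$ parametrize isomorphism classes of rank $r$ bundles with trivial pull-back to $\tilde X$.
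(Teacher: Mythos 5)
Your proposal is correct and matches the paper's treatment exactly: the corollary is obtained by noting that the universal covering is a normal covering (so property ({\bf T}) holds for $\Gamma=\Deck(\XX/X)\cong\pi_1(X)$) and then applying Theorem~\ref{automain}. No further argument is needed.
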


\section{Properties of factors of automorphy}
\label{section:properties}
\begin{df}\label{thetafunction}
Let $f:\Gamma\times Y\ra \GL_r(\bb C)$ be an $r$-dimensional factor of automorphy. A holomorphic
function $s:Y\ra\bb C^r$ is called an $f$-theta function
if it satisfies
\[
s(\gamma y)=f(\gamma, y)s(y) \text{ for all $\gamma\in \Gamma$, $y\in
  Y$.}
\]
\end{df}

\begin{tr}\label{sections}
Let $f:\Gamma\times Y\ra \GL_r(\bb C)$ be an $r$-dimensional factor of
automorphy. Then there is a one-to-one correspondence between sections of $E(f)$ and
$f$-theta functions.
\end{tr}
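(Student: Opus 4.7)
The plan is to construct mutually inverse maps between $f$-theta functions and sections of $E(f)$, based on the observation that $E(f)$ is by definition the quotient of the trivial bundle $Y\times\bb C^r\to Y$ by the $\Gamma$-action $\lambda(y,v)=(\lambda y, f(\lambda,y)v)$, and that the theta function condition $s(\gamma y)=f(\gamma,y)s(y)$ is exactly the condition that the map $\tilde s:Y\to Y\times\bb C^r$, $y\mapsto(y,s(y))$, be $\Gamma$-equivariant. Given an $f$-theta function $s$, I would post-compose $\tilde s$ with the quotient map $q:Y\times\bb C^r\to E(f)$; since $q\circ\tilde s$ is $\Gamma$-invariant, it descends through $X=Y/\Gamma$ to a map $\sigma_s:X\to E(f)$ satisfying $\pi\circ\sigma_s=\id_X$, i.e., a section. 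Holomorphicity is checked locally: on a small $U\subset X$ with sheet $V_{i_U}\subset p^{-1}(U)$, one has $\sigma_s|_U = \varphi_U^{-1}\circ (\id_U, s\circ p_{i_U}^{-1})$ in the notation of the proof of Theorem~\ref{AutoB}, a composition of holomorphic maps.

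Conversely, given a section $\sigma:X\to E(f)$, I would define $s:Y\to\bb C^r$ by declaring $s(y)$ to be the unique vector with $\sigma(p(y))=[y,s(y)]$; existence and uniqueness come from the fact that, for fixed $y$, the map $\bb C^r\to\pi^{-1}(p(y))$, $v\mapsto[y,v]$, is a bijection (this is inherent to the quotient construction). To confirm the theta function relation, note that $\sigma(p(\gamma y))=\sigma(p(y))$ forces $[\gamma y,s(\gamma y)]=[y,s(y)]$, which when unwound via the $\Gamma$-action together with the identity $f(\gamma,y)f(\gamma^{-1},\gamma y)=I$ (an easy consequence of the cocycle relation) yields exactly $s(\gamma y)=f(\gamma,y)s(y)$. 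Holomorphicity of $s$ on each sheet $V_{i_U}$ follows by pulling $\sigma|_U$ through the trivialization $\varphi_U$ and projecting onto the $\bb C^r$-factor, then precomposing with the biholomorphism $p_{i_U}$.

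The two constructions are inverse to each other immediately from their defining properties: starting from $s$ and going around gives back $s(y)$ as the unique vector with $[y,s(y)]=\sigma_s(p(y))$, and starting from $\sigma$ and going around reconstructs $\sigma$ pointwise by the formula $\sigma(p(y))=[y,s(y)]$. The main obstacle is really only bookkeeping: the one point requiring some care is the holomorphicity of $s$ in the reverse direction, and this in turn reduces to carefully tracking the explicit trivializations $\varphi_U$ from the proof of Theorem~\ref{AutoB}. Since those trivializations are holomorphic by construction and everything in sight is local and linear, no serious technical difficulty is anticipated.
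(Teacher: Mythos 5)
Your proof is correct, but it is organized differently from the paper's. You work directly with the quotient presentation $E(f)=(Y\times\bb C^r)/\Gamma$: the theta relation is recast as $\Gamma$-equivariance of the graph map $y\mapsto(y,s(y))$, a section is recovered by descending the composite with the quotient map, and the inverse direction uses that $v\mapsto[y,v]$ is a bijection onto the fibre (which, as you should perhaps say explicitly, relies on the freeness of the deck action, i.e.\ the uniqueness of the deck transformation carrying $y$ to $\gamma y$, noted in the paper's first Remark). The paper instead identifies a section with its family of local representatives $s_i:U_i\to\bb C^r$ satisfying the cocycle compatibility $s_i=g_{ij}s_j$ with $g_{ij}(x)=f(\lambda_{U_iU_j},\varphi_j(x))$, and glues these into a function on $Y$ via the local inverses $\varphi_i=(p|_{V_i})^{-1}$, checking well-definedness and the theta relation by direct computation with the cocycle. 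Your route is more conceptual and avoids the gluing/well-definedness bookkeeping entirely, at the cost of having to argue separately that the descended map and the extracted function are holomorphic (which you correctly reduce to the explicit trivializations $\varphi_U$ of Theorem~\ref{AutoB}; note only that on a sheet $V_i$ with $i\neq i_U$ the formula for $s$ acquires the extra factor $f(\lambda_{i_Ui},y)^{-1}$, so you should either insert it or choose the distinguished sheet through the point in question). The paper's route keeps everything in the transition-function language that the rest of the text uses, which is why the cocycle form of the correspondence is immediately available in later arguments such as Theorem~\ref{degreetheta}.
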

\begin{proof}
Let $\{V_i\}_{i\in \cal I}$ be a covering of $Y$ such that $p$
restricted to $V_i$ is a homeomorphism. Denote $\varphi_i\defeq
(p|_{V_i})^{-1}$, $U_i\defeq p(V_i)$. Then $\{U_i\}$ is a covering of
$X$ such that $E(f)$ is trivial over each $U_i$.

Consider  a section of $E(f)$ given by  functions $s_i:U_i\ra \bb C^r$
satisfying
\[
s_i(x)=g_{ij}(x)s_j(x)\text{ for $x\in U_i\cap U_j$,}
\]where
\[
g_{ij}(x)=f(\lambda_{U_i U_j}, \varphi_j(x)),\quad x\in U_i\cap U_j
\]
is  a cocycle defining $E(f)$ (see the proof of Theorem~\ref{automain}).
Define $s:Y\ra \bb C^r$ by
$s(\varphi_i(x))\defeq s_i(x)$. To prove that this  is
well-defined we need to show that $s_i(x)=s_j(x)$ when
$\varphi_i(x)=\varphi_j(x)$. But since $\varphi_i(x)=\varphi_j(x)$ we
obtain $\lambda_{U_i U_j}=1$. Therefore,
\begin{align*}
s_i(x)=g_{ij}(x)s_i(x)=f(\lambda_{U_i U_j},\varphi_j(x))s_j(x)=f(1, \varphi_j(x))s_j(x)=s_j(x).
\end{align*}
For any $\gamma\in \Gamma$ for any point $y\in Y$ take
$i, j\in \cal I$ and $x\in X$ such that $y=\varphi_j(x)$ and
$\gamma y=\gamma\varphi_j(x)=\varphi_i(x)$. Thus $\gamma =
\lambda_{U_iU_j}$ and one obtains
\begin{align*}
s(\gamma y)=&s(\varphi_i(x))=s_i(x)=g_{ij}(x)s_j(x)=\\&f(\lambda_{U_iU_j},
\varphi_j(x))s_j(x)=f(\gamma, y)s(\varphi_j(x))= f(\gamma, z)s(y).
\end{align*}
In other words, $s$ is an $f$-theta function.

Vice versa, let $s:Y\ra \bb C^r$ be an $f$-theta function. We
define $s_i:U_i\ra \bb C^r$ by $s_i(x)\defeq s(\varphi_i(x))$. Then
for a point $x\in U_i\cap U_j$ we have
\begin{align*}
s_i(x)=&s(\varphi_i(x))=s(\lambda_{U_iU_j}\varphi_j(x))=\\&f(\lambda_{U_iU_j},\varphi_j(x))s(\varphi_j(x))=g_{ij}(x)s_j(x),
\end{align*}
which means that the functions $s_i$ define a section of $E(f)$. The described
correspondences are clearly inverse to each other.
\end{proof}

The following statement will be useful in the sequel.
\begin{tr}\label{extension}
Let
\(
f(\lambda, y)=
\begin{pmatrix}
f'(\lambda, y)&\tilde{f}(\lambda, y)\\
0&f''(\lambda,  y)
\end{pmatrix}
\) be  an $r'+r''$-dimensional factor of automorphy, where
$f'(\lambda,y)\in \GL_{r'}(\bb C)$, $f''(\lambda, y)\in \GL_{r''}(\bb C)$. Then

(a)$f':\Gamma\times  Y\ra \GL_{r'}(\bb C)$
and  $f'':\Gamma\times  Y\ra \GL_{r''}(\bb C)$ are $r'$ and
$r''$-dimensional factors of automorphy respectively;

(b) there is an extension of vector
bundles
\[
\xymatrix{
0\ar[r]&E(f')\ar[r]^i&E(f)\ar[r]^\pi&E(f'')\ar[r]&0
}
.\]
\end{tr}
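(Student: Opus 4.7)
I would simply unpack the cocycle identity
$f(\lambda\mu,y) = f(\lambda,\mu y)f(\mu,y)$ in block-matrix form. Since block upper-triangular matrices multiply blockwise on the diagonal, the $(1,1)$-entry gives $f'(\lambda\mu,y) = f'(\lambda,\mu y)f'(\mu,y)$ and the $(2,2)$-entry gives the analogous identity for $f''$. Each value $f(\lambda,y)\in\GL_{r'+r''}(\bb C)$ is block upper-triangular and invertible, which forces both diagonal blocks to lie in $\GL_{r'}(\bb C)$ and $\GL_{r''}(\bb C)$ respectively. Holomorphy of $f'$ and $f''$ is inherited from that of $f$. So (a) is essentially immediate from reading off blocks.

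\textbf{Plan for part (b).} The idea is to realize $i$ and $\pi$ as the maps induced by the standard inclusion $\bb C^{r'}\hra \bb C^{r'+r''}$, $v'\mto (v',0)$, and projection $\bb C^{r'+r''}\twoheadrightarrow \bb C^{r''}$, $(v',v'')\mto v''$. Concretely I would define
\[
\tilde i : Y\times \bb C^{r'} \ra Y\times \bb C^{r'+r''},\quad (y,v')\mto (y,(v',0)),
\]
\[
\tilde \pi : Y\times \bb C^{r'+r''}\ra Y\times \bb C^{r''},\quad (y,(v',v''))\mto (y,v''),
\]
and check that both are $\Gamma$-equivariant for the actions determined by $f'$, $f$, $f''$. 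For $\tilde i$ equivariance follows because $f(\lambda,y)\binom{v'}{0}=\binom{f'(\lambda,y)v'}{0}$, and for $\tilde\pi$ because projecting $f(\lambda,y)\binom{v'}{v''}=\binom{f'v'+\tilde f v''}{f''v''}$ onto the second factor gives $f''(\lambda,y)v''$; the off-diagonal block $\tilde f$ is harmlessly killed by $\tilde\pi$. Hence $\tilde i$ and $\tilde\pi$ descend to holomorphic bundle maps $i:E(f')\to E(f)$ and $\pi:E(f)\to E(f'')$ over $X$.

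\textbf{Exactness.} All three assertions (injectivity of $i$, surjectivity of $\pi$, $\Ker\pi=\Im i$) are fiberwise linear-algebraic statements. The simplest way I would argue this is to pull back along $p:Y\ra X$: by the remark after Theorem~\ref{AutoB} the pullback $p^*E(f)$ is canonically $Y\times\bb C^{r'+r''}$ (and similarly for $f'$ and $f''$), and under these identifications $p^*i$ and $p^*\pi$ become $\tilde i$ and $\tilde\pi$, which are obviously the standard inclusion/projection. So the pulled-back sequence is exact fiber by fiber. Since $p$ is a surjective local biholomorphism, fiberwise exactness on $Y$ implies fiberwise exactness on $X$, giving the claimed short exact sequence.

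\textbf{Main obstacle.} There is no serious obstacle: the whole content is the observation that block upper-triangularity of $f$ is exactly the condition needed for $\bb C^{r'}\subset \bb C^{r'+r''}$ to be a $\Gamma$-invariant subbundle of $Y\times\bb C^{r'+r''}$, with quotient $Y\times\bb C^{r''}$ equipped with the factor $f''$. The only point requiring any care is bookkeeping of which coordinate lives in which bundle when checking equivariance of $\tilde\pi$; the off-diagonal block $\tilde f$ does not appear in the induced action on either $E(f')$ or $E(f'')$, which is exactly why the sequence is exact rather than split.
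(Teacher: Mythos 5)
Your proposal is correct and follows essentially the same route as the paper: part (a) by reading off the diagonal blocks of the cocycle identity, and part (b) by showing the standard inclusion $\bb C^{r'}\hra\bb C^{r'+r''}$ and projection onto $\bb C^{r''}$ are compatible with the $\Gamma$-actions defined by $f'$, $f$, $f''$ (your equivariance check of $\tilde i$ and $\tilde\pi$ is exactly the paper's well-definedness check of $i$ and $\pi$ on equivalence classes), with exactness reduced to fiberwise linear algebra. The only cosmetic difference is that you phrase the final exactness step via the pullback identification $p^*E(f)\simeq Y\times\bb C^{r'+r''}$, while the paper verifies injectivity of $i$ and surjectivity of $\pi$ directly in each fiber.
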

\begin{proof}
The statement of (a) follows from straightforward verification.
To prove (b) we define  maps $i$ and $\pi$ as follows.
\begin{align*}
i:&E(f')\ra E(f),\quad [ y,v]\mto [ y,\begin{pmatrix}v\\0\end{pmatrix}],\quad v\in \bb
C^{r'},\quad \begin{pmatrix}v\\0\end{pmatrix}\in \bb C^{r'+r''}\\
\pi:&E(f)\ra E(f''),\quad [ y,\begin{pmatrix}v\\w\end{pmatrix}]\ra
   [ y,w],\quad v\in \bb C^{r'},\quad w\in \bb C^{r''}
\end{align*}
Since $[\lambda y,f'(\lambda, y)v]$ is mapped via $i$ to
$[\lambda y,\begin{pmatrix}f'(\lambda, y)v\\0\end{pmatrix}]=[\lambda y,f(\lambda, y)\begin{pmatrix}v\\0\end{pmatrix}]$,
one concludes that $i$ is well-defined. Analogously, since $[\lambda y,f''(\lambda, y)w]=[ y,w]$ one sees
that $\pi$ is well-defined. Using the charts from the proof of (\ref{AutoB}) one easily sees that
the defined maps are holomorphic.

Notice that $i$ and $\pi$ respect fibers, $i$ is injective  and $\pi$ is surjective in each
fiber. This proves the statement.
\end{proof}

Now we recall one standard construction from linear algebra.
Let A be an $m\times n$ matrix. It represents some morphism $\bb
C^n\ra \bb C^m$ for fixed standard bases in $\bb C^n$ and $\bb C^m$.

Let $\cal F:\Vect^p\ra \Vect$ be a covariant functor. Let $A_1,\dots,
A_p$ be the matrices representing morphisms $\bb
C^n_1\stackrel{f_1}{\ra} \bb C^m_1, \dots, \bb
C^n_p\stackrel{f_p}{\ra} \bb C^m_p$ in standard bases.

If for each  object $\cal F(\bb C^m)$ we fix some basis, then the
matrix corresponding to the morphism $\cal F(f_1,\dots, f_p)$ is
denoted by $\cal F(A_1,\dots, A_p)$. Clearly it satisfies
\[
\cal F(A_1B_1,\dots, A_pB_p)=\cal F(A_1,\dots, A_p)\cal F(B_1,\dots, B_p).
\]

In this way  $A\ten B$, $S^q(A)$, $\bigwedge^q(A)$ can be defined. As
$\cal F$ one considers the functors
\[
\_\ten\_:\Vect^2\ra \Vect,\quad
S^n:\Vect \ra \Vect,\quad
\bigwedge:\Vect\ra \Vect
\]
respectively.

Recall that every holomorphic functor $\cal F:\Vect^n\ra \Vect$  can be canonically extended to the
category of vector bundles of finite rank over $X$. By abuse of notation we will denote the extended functor by $\cal F$ as well.

\begin{tr}\label{funct}
Let $\cal F:\Vect^n\ra \Vect$ be a covariant holomorphic functor.
Let $f_1,\dots,f_n$ be $r_i$-dimensional factors of automorphy.
Then $f=\cal F(f_1,\dots,f_n)$ is  a factor of automorphy
defining $\cal F(E(f_1), \dots, E(f_n))$.
\end{tr}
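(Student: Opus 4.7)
The plan is to split the statement into two assertions and handle them separately: first, that $f=\cal F(f_1,\dots,f_n)$ is indeed an $r$-dimensional factor of automorphy (with $r$ the dimension of $\cal F(\bb C^{r_1},\dots,\bb C^{r_n})$), and second, that the associated bundle $E(f)$ is isomorphic to $\cal F(E(f_1),\dots,E(f_n))$.

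For the first part I would verify the two required properties of a factor of automorphy directly. Holomorphicity of $f:\Gamma\times Y\ra\GL_r(\bb C)$ follows from the fact that each $f_i$ is holomorphic in $y$ and from the assumption that $\cal F$ is a holomorphic functor, so that the map on morphism spaces $\Hom(\bb C^{r_1},\bb C^{r_1})\times\cdots\times\Hom(\bb C^{r_n},\bb C^{r_n})\ra\Hom(\cal F(\bb C^{r_1},\dots),\cal F(\bb C^{r_1},\dots))$ is holomorphic. The cocycle identity is an immediate consequence of the multiplicativity recalled just before the statement:
\[
f(\lambda\mu,y)=\cal F\bigl(f_1(\lambda\mu,y),\dots,f_n(\lambda\mu,y)\bigr)=\cal F\bigl(f_1(\lambda,\mu y)f_1(\mu,y),\dots\bigr),
\]
which equals $\cal F(f_1(\lambda,\mu y),\dots)\,\cal F(f_1(\mu,y),\dots)=f(\lambda,\mu y)f(\mu,y)$.

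For the second part, the natural strategy is to compare transition cocycles on a common trivializing cover. Choose a cover $\{U\}$ of $X$ of the kind used in the proof of Theorem~\ref{AutoB}, refined so that each $E(f_i)$ is trivialized over every $U$ by the charts $\varphi_U$ constructed there. From that construction the transition cocycle of $E(f_i)$ is $g^{(i)}_{UV}(x)=f_i(\lambda_{i_U i_V},p_{i_V}^{-1}(x))$. By the canonical extension of $\cal F$ to vector bundles (which acts on cocycles entrywise), the bundle $\cal F(E(f_1),\dots,E(f_n))$ has transition cocycle
\[
\cal F\bigl(g^{(1)}_{UV}(x),\dots,g^{(n)}_{UV}(x)\bigr)=\cal F\bigl(f_1(\lambda_{i_U i_V},p_{i_V}^{-1}(x)),\dots,f_n(\lambda_{i_U i_V},p_{i_V}^{-1}(x))\bigr),
\]
which is exactly $f(\lambda_{i_U i_V},p_{i_V}^{-1}(x))$, i.e.\ the transition cocycle of $E(f)$. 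Hence the two bundles are isomorphic.

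The only real subtlety is the second part: one must be slightly careful about the canonical extension of a holomorphic functor $\cal F$ from $\Vect^n$ to finite-rank vector bundles, and in particular that its effect on transition cocycles is precisely to apply $\cal F$ entrywise (this is the content of the ``canonical extension'' that the paper invokes before the statement). Once this is granted, the comparison of cocycles reduces the theorem to the identity $\cal F(f_1(\lambda,y),\dots,f_n(\lambda,y))=f(\lambda,y)$, which holds by definition. I expect this identification of the extended functor with the entrywise application on cocycles to be the step most in need of a careful justification; the remaining verifications are essentially formal.
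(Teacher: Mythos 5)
Your proposal is correct and follows essentially the same route as the paper's proof: verify the cocycle identity via the multiplicativity of $\cal F$ on matrices, and then identify the transition cocycle of $\cal F(E(f_1),\dots,E(f_n))$ with that of $E(f)$ by applying $\cal F$ entrywise to the cocycles $g^{(i)}_{UV}(x)=f_i(\lambda_{i_U i_V},p_{i_V}^{-1}(x))$. The only point the paper makes explicit that you leave implicit is that $\cal F(f_1,\dots,f_n)$ takes values in $\GL_r(\bb C)$ because a functor sends isomorphisms to isomorphisms; conversely, you are more explicit than the paper about the step where the canonical extension of $\cal F$ to bundles acts on cocycles entrywise.
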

\begin{proof}One clearly has
\begin{align*}
\cal F(f_1,\dots, f_n)(\lambda\mu, y)=&\cal
F(f_1(\lambda\mu, y),\dots, f_n(\lambda\mu, y))=\\
&\cal F(f_1(\lambda,\mu  y)f_1(\mu,  y),\dots,f_n(\lambda,\mu
 y)f_n(\mu,  y))=\\
&F(f_1(\lambda,\mu  y),\dots, f_n(\lambda,\mu
 y))F(f_1(\mu,  y),\dots, f_n(\mu,  y))=\\
&\cal F(f_1,\dots, f_n)(\lambda,\mu
 y)\cal F(f_1,\dots, f_n)(\mu,  y).
\end{align*}

Since $(f_1,\dots, f_n)$ represents an isomorphism in $\Vect^n$, $\cal
F(f_1,\dots, f_n)$ also represents an isomorphism $\bb C^r\ra \bb C^r$
for some $r\in \bb N$. Therefore, $f$ is an $r$-dimensional factor of automorphy.

Since $f=\cal F(f_1,\dots, f_n)$, the same holds for cocycles  defining the
corresponding vector bundles, i. e., $g_{U_1U_2}=\cal
F({g_1}_{U_1U_2},\dots, {g_n}_{U_1U_2})$, where ${g_i}_{U_1U_2}$ is a
cocycle defining $E(f_i)$. This is exactly the condition
$E(f)=\cal F (E(f_1),\dots, E(f_n))$.
\end{proof}
For example for $\cal F=\_\ten\_:\Vect^2\ra \Vect$ we get the following obvious corollary.
\begin{cor}
Let $f':\Gamma\times  Y\ra \GL_{r'}(\bb C)$
and  $f'':\Gamma\times  Y\ra \GL_{r''}(\bb C)$ be two factors of automorphy. Then
$f=f'\ten f'':\Gamma\times  Y\ra \GL_{r'r''}(\bb C)$ is also a factor
of automorphy. Moreover, $E(f)\simeq E(f')\ten E(f'')$.\label{tensor}
\end{cor}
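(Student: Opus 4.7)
The plan is to observe that this corollary is essentially a direct instance of Theorem~\ref{funct} applied to the tensor product functor, so there is almost nothing to do beyond verifying the hypothesis.

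First I would note that $\_\ten\_:\Vect^2\ra\Vect$ is a covariant holomorphic bifunctor: on objects it sends $(V,W)\mto V\ten W$, and on morphisms it sends a pair of linear maps $(A,B)$ to their Kronecker product $A\ten B$, whose matrix entries are bilinear (hence holomorphic) in the entries of $A$ and $B$. So the hypotheses of Theorem~\ref{funct} are satisfied with $n=2$ and $\cal F=\_\ten\_$.

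Then I would simply apply Theorem~\ref{funct}: for factors of automorphy $f'$ and $f''$ of dimensions $r'$ and $r''$, the function $f=f'\ten f''$, defined pointwise by $f(\lambda,y)=f'(\lambda,y)\ten f''(\lambda,y)\in\GL_{r'r''}(\bb C)$, is an $r'r''$-dimensional factor of automorphy, and the associated vector bundle is $E(f)\simeq \cal F(E(f'),E(f''))=E(f')\ten E(f'')$. Both the cocycle relation for $f$ and the bundle isomorphism come for free from the general statement in Theorem~\ref{funct}.

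The only minor point worth flagging is that $f'(\lambda,y)\ten f''(\lambda,y)$ indeed lies in $\GL_{r'r''}(\bb C)$, which follows from $\det(A\ten B)=(\det A)^{r''}(\det B)^{r'}$ so that invertibility of $A$ and $B$ implies invertibility of $A\ten B$; this is what underlies the fact that $\_\ten\_$ preserves isomorphisms, a requirement already absorbed into being a functor on $\Vect^2$. There is no real obstacle here; the content of the statement has been fully established in Theorem~\ref{funct}, and this corollary merely specializes it.
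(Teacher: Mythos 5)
Your proposal is correct and matches the paper exactly: the paper introduces this corollary with the phrase ``For example for $\cal F=\_\ten\_:\Vect^2\ra \Vect$ we get the following obvious corollary,'' i.e.\ it too derives the result as an immediate specialization of Theorem~\ref{funct} to the covariant holomorphic bifunctor $\_\ten\_$. Your extra remark on $\det(A\ten B)=(\det A)^{r''}(\det B)^{r'}$ is a harmless elaboration of a point the paper absorbs into the general statement that $\cal F$ applied to an isomorphism is an isomorphism.
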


It is not essential that the functor in Theorem~\ref{funct} is
covariant. The following theorem is a generalization of
Theorem~\ref{funct}.
\begin{tr}
Let $\cal F:\Vect^n\ra \Vect$ be a holomorphic functor. Let $\cal F$ be covariant in $k$ first variables
and  contravariant in $n-k$ last variables.
Let $f_1,\dots,
f_n$ be $r_i$-dimensional factors of automorphy. Then $f=\cal F(f_1,\dots,
f_k,f_{k+1}^{-1},\dots, f_n^{-1})$ is  a factor of automorphy defining $\cal F(E(f_1), \dots, E(f_n))$.
\end{tr}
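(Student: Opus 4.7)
The plan is to reduce the mixed-variance case to the covariant case already handled in Theorem~\ref{funct}. The key observation is that for any $r_i$-dimensional factor of automorphy $f_i$, its pointwise inverse satisfies the \emph{reversed} cocycle identity
\[
f_i^{-1}(\lambda\mu, y) \;=\; \bigl(f_i(\lambda, \mu y) f_i(\mu, y)\bigr)^{-1} \;=\; f_i^{-1}(\mu, y)\, f_i^{-1}(\lambda, \mu y),
\]
and this order reversal is precisely what is compensated by the contravariance of $\mathcal{F}$ in the last $n-k$ slots.

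Concretely, I would verify the cocycle identity for $f=\mathcal{F}(f_1,\dots,f_k,f_{k+1}^{-1},\dots,f_n^{-1})$ by expanding $f(\lambda\mu,y)$ one slot at a time: in covariant slots $i\le k$ the identity $f_i(\lambda\mu,y)=f_i(\lambda,\mu y)f_i(\mu,y)$ splits under $\mathcal{F}$ in the natural order, while in contravariant slots $i>k$ the factor $f_i^{-1}(\lambda\mu, y)=f_i^{-1}(\mu,y)f_i^{-1}(\lambda,\mu y)$ splits under $\mathcal{F}$ with reversed order by contravariance, placing $f_i^{-1}(\lambda,\mu y)$ in the first $\mathcal{F}$-factor and $f_i^{-1}(\mu,y)$ in the second. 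The two order reversals cancel, and the multifunctor composition rule assembles these slot-by-slot splittings into a single product $f(\lambda,\mu y)\,f(\mu,y)$, as required. The identification $E(f)\simeq \mathcal{F}(E(f_1),\dots,E(f_n))$ is then immediate from the proof of Theorem~\ref{funct}: over overlaps in the trivializing cover constructed in Theorem~\ref{AutoB}, the cocycle of $f$ is $\mathcal{F}$ applied (with inverses in contravariant slots) to the cocycles of the $f_i$, which is by definition the cocycle describing the canonical extension of $\mathcal{F}$ to vector bundles.

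A more streamlined variant of the same argument is to define, on the groupoid of finite-dimensional complex vector spaces and isomorphisms, the covariant functor $\tilde{\mathcal{F}}(\alpha_1,\dots,\alpha_n):=\mathcal{F}(\alpha_1,\dots,\alpha_k,\alpha_{k+1}^{-1},\dots,\alpha_n^{-1})$; the contravariance of $\mathcal{F}$ together with the preinversion turns composition in contravariant slots back into its natural order, so $\tilde{\mathcal{F}}$ is genuinely covariant and the statement becomes an instance of Theorem~\ref{funct} applied to $\tilde{\mathcal{F}}$. The only real obstacle here is combinatorial bookkeeping: making sure that the order reversal from inverting the cocycle identity is matched by the order reversal built into contravariance in precisely the same set of slots. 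Once this is tracked correctly, both the cocycle relation and the bundle identification drop out with no further work.
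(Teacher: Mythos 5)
Your proposal is correct and follows essentially the same route as the paper, which simply asserts that the proof is analogous to that of Theorem~\ref{funct}; you carry out that analogy explicitly, correctly observing that inverting $f_i$ reverses the order in the cocycle identity and that contravariance of $\mathcal F$ in those slots reverses it back, so the multifunctor composition rule yields $f(\lambda\mu,y)=f(\lambda,\mu y)f(\mu,y)$ and the cocycle identification goes through as before. Your reformulation via the covariant functor $\tilde{\mathcal F}$ on the groupoid of isomorphisms is a clean way to package the same bookkeeping.
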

\begin{proof}
The proof is analogous to the proof of Theorem~\ref{funct}.
\end{proof}

\section{Vector bundles on complex tori}
\label{section:VB on tori}
\subsection{One dimensional complex tori}\label{chaptertori}
Let $X$ be a complex torus, i. e., $X=\bb C/\Gamma$, $\Gamma=\bb Z\tau+\bb Z$, $\Im\tau > 0$. Then the universal covering is $\XX=\bb
C$, namely
\[
\pr:\bb C\ra \bb C/\Gamma,\quad x\mto [x].
\]
We have an action of $\Gamma$ on $\bb C$:
\[
\Gamma\times \bb C\ra \bb C, \quad (\gamma, y)\mto \gamma + y.
\]
Clearly $\Gamma$ acts on $\bb C$ by deck transformations and
satisfies the property \T.

 Since $\bb C$ is a non-compact Riemann surface, by \cite[Theorem~30.4, p. 204]{Forst}, there are only trivial bundles on $\bb C$. Therefore,  we have a one-to-one correspondence  between
classes of isomorphism of vector bundles of rank $r$  on $X$  and
equivalence classes of
factors of automorphy
\[
f:\Gamma\times\bb C\ra\GL_r(\bb C).
\]

As usually, $V_a$ denotes the standard parallelogram constructed at
point $a$, $U_a$ is  the image of $V_a$ under the
projection, $\varphi_a:U_a\ra V_a$ is the local inverse of the projection.

\begin{rem}
Let $f$ be an $r$-dimensional  factor of automorphy. Then
\[
g_{ab}(x)=f(\varphi_a(x)-\varphi_b(x), \varphi_b(x))
\] is a cocycle
defining $E(f)$. This follows from the construction of the cocycle in
the proof of Theorem~\ref{automain}.
\end{rem}

\begin{ex}
There are  factors of automorphy corresponding to classical theta
functions.
For any theta-characteristic $\xi=a\tau+b$, where $a,b\in \bb R$, there is a holomorphic function
$\theta_\xi:\bb C\ra \bb C$ defined by
\begin{align*}
\theta_\xi(z)=\sum\limits_{n\in \bb Z}\exp(\pi i (n+a)^2\tau)\exp(2\pi
i (n+a)(z+b)),
\end{align*}
which satisfies
\[
\theta_\xi(\gamma+ z)=\exp(2\pi i a \gamma -\pi i
p^2\tau - 2 \pi i p(z+\xi))\theta_\xi(z)=e_\xi(\gamma,
  z)\theta_\xi(z),
\]
where $\gamma=p\tau+q$ and  $e_\xi(\gamma,z)=\exp(2\pi i a\gamma-\pi i
p^2\tau - 2 \pi i p(z+\xi))$. Since
\[
e_\xi(\gamma_1+\gamma_2,z)=e_\xi(\gamma_1,\gamma_2+z)e_\xi(\gamma_2,z),
\]
we conclude that $e_\xi(\gamma, z)$ is a factor of automorphy.

By Theorem~\ref{sections} $\theta_\xi(z)$ defines a section of $E(e_\xi(\gamma, z))$.


For more information on classical theta functions see~\cite{MumThetaI, MumThetaII, MumThetaIII}.
\end{ex}

\begin{tr}\label{degreetheta}
$\deg E(e_\xi)=1$.
\end{tr}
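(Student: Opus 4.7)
The plan is to exploit the explicit holomorphic section of $E(e_\xi)$ that has already been produced, namely $\theta_\xi$, and to compute its degree as the number of zeros of this section on the elliptic curve $X=\bb C/\Gamma$.

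First I would invoke Theorem~\ref{sections}: because $\theta_\xi(\gamma+z)=e_\xi(\gamma,z)\theta_\xi(z)$, the entire function $\theta_\xi$ is an $e_\xi$-theta function, hence corresponds to a global holomorphic section $s$ of the line bundle $L\defeq E(e_\xi)$. One should briefly remark that $\theta_\xi\not\equiv 0$ (the summand $n=0$ gives $\exp(\pi i a^2\tau)\exp(2\pi i a(z+b))$, which is nowhere zero, so the Laurent expansion of the series in $q=\exp(2\pi i z)$ has a nonzero leading coefficient). Since $s$ is not identically zero on the compact connected curve $X$, the line bundle $L$ has a non-trivial effective divisor of sections and
\[
\deg L=\sum_{x\in X}\ord_x s.
\]

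Next I would compute this sum by the argument principle applied on a fundamental parallelogram $V_a$ with vertices $a,\ a+1,\ a+1+\tau,\ a+\tau$, choosing $a$ generic so that no zero of $\theta_\xi$ lies on $\partial V_a$. Each zero of $s$ on $X$ corresponds, under the projection $\pr:\bb C\ra X$, to exactly one zero of $\theta_\xi$ in $V_a$ with the same multiplicity, so
\[
\deg L=\frac{1}{2\pi i}\oint_{\partial V_a}\frac{\theta_\xi'(z)}{\theta_\xi(z)}\,dz.
\]

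The key computation is to pair opposite sides of the parallelogram using the quasi-periodicity of $\theta_\xi$. Taking logarithmic derivatives of
\[
\theta_\xi(z+1)=e_\xi(1,z)\,\theta_\xi(z),\qquad \theta_\xi(z+\tau)=e_\xi(\tau,z)\,\theta_\xi(z),
\]
and using that $e_\xi(1,z)=\exp(2\pi i a)$ is constant in $z$ while $e_\xi(\tau,z)=\exp(2\pi i a\tau-\pi i\tau-2\pi i(z+\xi))$ is linear in $z$, I obtain
\[
\frac{\theta_\xi'(z+1)}{\theta_\xi(z+1)}=\frac{\theta_\xi'(z)}{\theta_\xi(z)},\qquad \frac{\theta_\xi'(z+\tau)}{\theta_\xi(z+\tau)}=\frac{\theta_\xi'(z)}{\theta_\xi(z)}-2\pi i.
\]
Hence the integrals along the two sides parallel to $\tau$ cancel, while the integrals along the two sides parallel to $1$ differ by $\int_0^1 2\pi i\,dt=2\pi i$. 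Dividing by $2\pi i$ yields $\deg L=1$, which is the claim.

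The only delicate point is really the contour manipulation — choosing $a$ so $\theta_\xi$ does not vanish on $\partial V_a$, keeping track of orientations, and matching the $z$-derivative of $\log e_\xi(\tau,z)$ (which equals $-2\pi i$) with the correct sign. Once the bookkeeping is handled, the rest is immediate from the functional equation.
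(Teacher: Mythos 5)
Your argument is correct, and it diverges from the paper's at the key step. Both proofs begin identically: $\theta_\xi$ is an $e_\xi$-theta function, hence by Theorem~\ref{sections} it gives a nonzero holomorphic section $s_\xi$ of $E(e_\xi)$, and $\deg E(e_\xi)$ is the number of zeros of $s_\xi$ counted with multiplicity, i.e.\ the number of zeros of $\theta_\xi$ in a fundamental parallelogram. The paper then simply quotes the classical description of the zero locus of $\theta_\xi$ --- simple zeros exactly at $\frac12+\frac\tau2+\xi+\Gamma$ --- to conclude $E(e_\xi)\simeq[p]$ with $p=[\frac12+\frac\tau2+\xi]$, citing Griffiths--Harris. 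You instead count the zeros directly by the argument principle on $\partial V_a$, using only the quasi-periodicity relations; your bookkeeping is right (the sides parallel to $\tau$ cancel since $e_\xi(1,z)$ is constant in $z$, and the sides parallel to $1$ contribute $2\pi i$ coming from $\partial_z\log e_\xi(\tau,z)=-2\pi i$), and $\theta_\xi\not\equiv 0$ does follow because every Fourier coefficient $\exp(\pi i(n+a)^2\tau+2\pi i(n+a)b)$ is nonzero --- though it is cleaner to argue via linear independence of the characters $e^{2\pi i(n+a)z}$ than via a ``leading coefficient'' of a two-sided series. Your route is more self-contained, requiring no external facts about theta functions beyond the functional equation; the paper's shorter route buys something extra, namely the exact location of the zero, which is reused immediately afterwards in the proof of Theorem~\ref{tr:shift}.
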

\begin{proof}
We know that sections of $E(e_\xi)$ correspond to $e_\xi$ - theta functions. The classical $e_\xi$-theta
function $\theta_\xi(z)$ defines
a section $s_\xi$ of $E(e_\xi)$. Since $\theta_\xi$ has only
simple zeros and  the set of zeros of
$\theta_\xi(z)$ is $\frac{1}{2}+\frac{\tau}{2}+\xi+\Gamma$, we conclude
that $s_\xi$ has exactly one zero at point
$p=[\frac{1}{2}+\frac{\tau}{2}+\xi]\in X$.  Hence by~\cite[p.~136]{GrHar}
we get $E(e_\xi)\simeq [p]$  and thus  $\deg
E(e_\xi)=1$.
\end{proof}
\begin{tr}\label{tr:shift}
Let $\xi$ and $\eta$ be two theta-characteristics. Then
\[
E(e_\xi)\simeq t^*_{[\eta-\xi]}E(e_\eta),
\]
where
\(
t_{[\eta-\xi]}:X\ra X,\quad x\mto x+ [\eta-\xi]
\)
is the
translation by $[\eta-\xi]$.
\end{tr}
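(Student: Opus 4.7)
My plan is to reduce the claim to a computation in $\Pic(X)$, using the explicit identification of $E(e_\xi)$ as a divisor line bundle obtained in the proof of Theorem~\ref{degreetheta}. That proof shows $E(e_\xi) \simeq [p_\xi]$, where $p_\xi \defeq [\tfrac{1}{2}+\tfrac{\tau}{2}+\xi] \in X$ is the unique zero in $X$ of the section of $E(e_\xi)$ induced by the classical theta function $\theta_\xi$; the identical argument applied to $\eta$ yields $E(e_\eta) \simeq [p_\eta]$ with $p_\eta \defeq [\tfrac{1}{2}+\tfrac{\tau}{2}+\eta]$.

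The second ingredient is the elementary behaviour of line bundles under translation on $X$: for any $a, q \in X$, the translation $t_a : X \to X$, $x \mapsto x+a$, satisfies $t_a^{-1}(q) = q - a$, and therefore pulls back the point divisor $q$ to $q-a$. Hence $t_a^*[q] \simeq [q-a]$ as line bundles. Applying this with $a = [\eta-\xi]$ and $q = p_\eta$, and observing that $p_\eta - [\eta-\xi] = p_\xi$ in $X$, we obtain
\[
t_{[\eta-\xi]}^* E(e_\eta) \simeq t_{[\eta-\xi]}^* [p_\eta] \simeq [p_\eta - [\eta-\xi]] = [p_\xi] \simeq E(e_\xi),
\]
which is the desired isomorphism.

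The only point requiring care is the sign in the divisor pullback formula, $t_a^*[q] \simeq [q-a]$ rather than $[q+a]$. This is immediate from the definition of $[D]$ by local defining equations: a local equation $f$ for $D$ pulls back under $t_a$ to $f \circ t_a$, whose vanishing locus is $t_a^{-1}(D)$. A direct but noticeably more computational alternative would be to observe that, since a lift of $t_{[\eta-\xi]}$ to $\bb C$ is the translation $z \mapsto z + (\eta-\xi)$, the pullback $t_{[\eta-\xi]}^* E(e_\eta)$ is described by the factor of automorphy $e_\eta(\gamma,\, z + \eta - \xi)$, and one would then have to exhibit a holomorphic $h : \bb C \to \bb C^*$ implementing the equivalence $e_\xi(\gamma,z) \sim e_\eta(\gamma,\,z+\eta-\xi)$. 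This is possible but messier than the divisor route, so I would not take this approach.
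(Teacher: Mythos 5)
Your proof is correct and is essentially identical to the paper's: both identify $E(e_\xi)\simeq[p_\xi]$ and $E(e_\eta)\simeq[p_\eta]$ via the zeros of the classical theta sections from Theorem~\ref{degreetheta}, and then conclude from $t_{[\eta-\xi]}(p_\xi)=p_\eta$ that $t^*_{[\eta-\xi]}[p_\eta]\simeq[p_\xi]$. Your explicit check of the sign in $t_a^*[q]\simeq[q-a]$ is a welcome extra precaution, but it is the same argument.
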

\begin{proof}
As in the proof of Theorem~\ref{degreetheta} $E(e_\xi)\simeq [p]$ and
$E(e_\eta)=[q]$ for $p=[\frac{1}{2}+\frac{\tau}{2}+\xi]$ and
$q=[\frac{1}{2}+\frac{\tau}{2}+\eta]$. Since $t_{[\eta-\xi]}p=q$, we
get
\[
E(e_\xi)\simeq [p]\simeq t^*_{[\eta-\xi]}[q]\simeq t^*_{[\eta-\xi]}E(e_\eta),
\]
which completes the proof.
\end{proof}

Now we are going to investigate the extensions of the type
\[
0\ra X\times\bb C\ra E\ra X\times\bb C\ra 0.
\]
In this case the transition functions are given by matrices of the
type
\[
\begin{pmatrix}
1&*\\
0&1
\end{pmatrix}.
\]
 and $E$ is isomorphic to $E(f)$ for some factor of automorphy $f$ of the form
\(
f(\lambda,\xx)=
\begin{pmatrix}
1&\mu(\lambda,\xx)\\0&1
\end{pmatrix}
\). Note that the condition for $f$ to be a factor of automorphy  in this case is equivalent
to the condition
\[
\mu(\lambda+\lambda',\xx)=\mu(\lambda,\lambda'+\xx)+\mu(\lambda',\xx),
\]
where we use the additive notation for the group operation since $\Gamma$
is commutative.

\begin{tr}
$f$ defines trivial bundle if and only if
  $\mu(\lambda,\xx)=\xi(\lambda\xx)-\xi(\xx)$ for some holomorphic
  function $\xi:\bb C\ra\bb C$.
\label{triv}
\end{tr}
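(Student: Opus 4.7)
The plan is to prove both implications via the equivalence relation on factors of automorphy: $E(f)$ is trivial if and only if $f\sim I_2$, i.e., there exists a holomorphic $h:\bb C\ra\GL_2(\bb C)$ satisfying $h(\lambda+\tilde x)f(\lambda,\tilde x)=h(\tilde x)$. Everything else is bookkeeping with $2\times 2$ upper-triangular unipotent matrices, together with the compactness of $X$.

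For the \emph{if} direction, given $\mu(\lambda,\tilde x)=\xi(\lambda+\tilde x)-\xi(\tilde x)$, I take the candidate trivialization $h(\tilde x)=\begin{pmatrix}1&-\xi(\tilde x)\\0&1\end{pmatrix}$ and verify by a direct matrix multiplication that $h(\lambda+\tilde x)f(\lambda,\tilde x)=h(\tilde x)$. By Theorem~\ref{automain} this shows $f\sim I_2$, so $E(f)$ is trivial.

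For the \emph{only if} direction, assume $E(f)$ is trivial and pick a holomorphic $h=\begin{pmatrix}a&b\\c&d\end{pmatrix}:\bb C\ra\GL_2(\bb C)$ with $h(\lambda+\tilde x)f(\lambda,\tilde x)=h(\tilde x)$. Writing out the four matrix entries I get that $a$ and $c$ are $\Gamma$-periodic, while $a(\lambda+\tilde x)\mu=b(\tilde x)-b(\lambda+\tilde x)$ and $c(\lambda+\tilde x)\mu=d(\tilde x)-d(\lambda+\tilde x)$. Since $X=\bb C/\Gamma$ is compact, the periodic holomorphic functions $a$ and $c$ descend to holomorphic functions on $X$, hence are constants $a_0,c_0\in\bb C$. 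If $a_0\neq 0$, I set $\xi(\tilde x)=-b(\tilde x)/a_0$ and obtain $\mu(\lambda,\tilde x)=\xi(\lambda+\tilde x)-\xi(\tilde x)$ immediately.

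The main (still very minor) obstacle is the case $a_0=0$, which has to be excluded or handled separately. When $a_0=0$, the equation for $b$ forces $b$ itself to be $\Gamma$-periodic and thus constant $b_0$; then $\det h(\tilde x)=-b_0 c_0$ must be a nonzero constant, so in particular $c_0\neq 0$, and I define $\xi(\tilde x)=-d(\tilde x)/c_0$ to conclude again that $\mu(\lambda,\tilde x)=\xi(\lambda+\tilde x)-\xi(\tilde x)$. In either case the coboundary description of $\mu$ is obtained, completing the proof.
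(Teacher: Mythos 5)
Your proof is correct and follows essentially the same route as the paper: write out the triviality condition $h(\lambda+\xx)f(\lambda,\xx)=h(\xx)$ entrywise, use double periodicity plus Liouville to make two entries of $h$ constant, and use $\det h\neq 0$ to divide. The only cosmetic differences are that you write the equivalence with $h$ on the other side (so the periodic entries are $a,c$ rather than $c,d$) and that you spell out the case split on which constant is nonzero slightly more explicitly than the paper does.
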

\begin{proof}
We know that $E$ is trivial if and only if
$h(\lambda\xx)=f(\lambda,\xx)h(\xx)$ for some holomorphic
function $h:\XX\ra \GL_2(\bb C)$. Let
\(
h=
\begin{pmatrix}
a(\xx)&b(\xx)\\
c(\xx)&d(\xx)
\end{pmatrix}
\), then the last condition is

\begin{align*}
\begin{pmatrix}
a(\lambda\xx)&b(\lambda\xx)\\
c(\lambda\xx)&d(\lambda\xx)
\end{pmatrix}
=
&\begin{pmatrix}
1&\mu(\lambda,\xx)\\
0&1
\end{pmatrix}
\begin{pmatrix}
a(\xx)&b(\xx)\\
c(\xx)&d(\xx)
\end{pmatrix}
=\\
&\begin{pmatrix}
a(\xx)+c(\xx)\mu(\lambda,\xx)&b(\xx)+d(\xx)\mu(\lambda,\xx)\\
c(\xx)&d(\xx)
\end{pmatrix}.
\end{align*}
In particular it means $c(\lambda\xx)=c(\xx)$ and
$d(\lambda\xx)=d(\xx)$, i. e., $c$ and $d$ are doubly periodic functions
on $\XX=\bb C$, so they should be constant, i. e., $c(\lambda,\xx)=c\in \bb
C$, $d(\lambda,\xx)=d\in \bb C$.

Now we have
\begin{align*}
a(\xx)+c\mu(\lambda,\xx)&=a(\lambda\xx)\\
b(\xx)+d\mu(\lambda,\xx)&=b(\lambda\xx)
\end{align*} which implies
\begin{align*}
c\mu(\lambda,\xx)&=a(\lambda\xx)-a(\xx)\\
d\mu(\lambda,\xx)&=b(\lambda\xx)-b(\xx).
\end{align*}
Since $\det h(\xx)\neq 0$ for all $\xx\in\XX=\bb
C$ one of the numbers $c$ and $d$ is not equal to zero. Therefore, one
concludes that $\mu(\lambda,\xx)=\xi(\lambda\xx)-\xi(\xx)$ for some
holomorphic function $\xi:\XX=\bb C\ra \bb C$.

Now suppose $\mu(\lambda,\xx)=\xi(\lambda\xx)-\xi(\xx)$ for some
holomorphic function $\xi:\bb C\ra \bb C$. Clearly for
\(
h(\xx)=
\begin{pmatrix}
1&\xi(\xx)\\
0&1
\end{pmatrix}
\) one has that $\det h(\xx)=1\neq 0$ and
\begin{align*}
f(\lambda,\xx)h(\xx)=&
\begin{pmatrix}
1&\mu(\lambda,\xx)\\
0&1
\end{pmatrix}
\begin{pmatrix}
1&\xi(\xx)\\
0&1
\end{pmatrix}
=\\
&
\begin{pmatrix}
1&\xi(\xx)+\mu(\lambda,\xx)\\
0&1
\end{pmatrix}
=
\begin{pmatrix}
1&\xi(\lambda\xx)\\
0&1
\end{pmatrix}
=
h(\lambda\xx).
\end{align*}
We have shown, that $f$ defines the trivial bundle.
This proves the statement of the theorem.
\end{proof}
\begin{tr}
Two factors of automorphy
\(
f(\lambda,\xx)=
\begin{pmatrix}
1&\mu(\lambda,\xx)\\
0&1
\end{pmatrix}
\) and
\(
f'(\lambda,\xx)=
\begin{pmatrix}
1&\nu(\lambda,\xx)\\
0&1
\end{pmatrix}
\) defining non-trivial bundles are equivalent if and only if
\begin{align*}
\mu(\lambda,\xx)-k\nu(\lambda,\xx)=\xi(\lambda\xx)-\xi(\xx),\quad k\in
\bb C, \quad k\neq 0
\end{align*} for some holomorphic function $\xi:\bb C=\XX\ra \bb C$.
\end{tr}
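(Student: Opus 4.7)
The plan is to unwind the equivalence relation directly on $2\times 2$ matrices and extract the scalar identity. Writing
\(
h(\tilde x)=\begin{pmatrix} a(\tilde x)&b(\tilde x)\\ c(\tilde x)&d(\tilde x)\end{pmatrix}
\)
for the holomorphic map $\bb C\to\GL_2(\bb C)$ witnessing $f\sim f'$, the equation $h(\lambda\tilde x)f(\lambda,\tilde x)=f'(\lambda,\tilde x)h(\tilde x)$ becomes four scalar equations obtained by expanding both sides; in particular the bottom row gives $c(\lambda\tilde x)=c(\tilde x)$ and $c(\tilde x)\mu(\lambda,\tilde x)+d(\lambda\tilde x)=d(\tilde x)$. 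Since $c$ is doubly periodic on $\bb C$, it must be a constant $c\in\bb C$, exactly as in the proof of Theorem~\ref{triv}.

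The forward direction then splits on whether $c=0$. If $c\neq 0$, the second of the bottom-row relations reads $\mu(\lambda,\tilde x)=\bigl(d(\tilde x)-d(\lambda\tilde x)\bigr)/c$, so $\mu$ would be a coboundary of the form $\xi(\lambda\tilde x)-\xi(\tilde x)$ with $\xi=-d/c$; by Theorem~\ref{triv} this forces $E(f)$ to be trivial, contradicting the hypothesis. Hence $c=0$, and the remaining equations give $a(\lambda\tilde x)=a(\tilde x)$ (so $a$ is a constant), $d(\lambda\tilde x)=d(\tilde x)$ (so $d$ is a constant), with $ad=\det h\neq 0$, together with the top-right equation
\[
a\mu(\lambda,\tilde x)+b(\lambda\tilde x)=b(\tilde x)+d\,\nu(\lambda,\tilde x).
\]
Rearranging, $\mu(\lambda,\tilde x)-(d/a)\nu(\lambda,\tilde x)=\xi(\lambda\tilde x)-\xi(\tilde x)$ with $k:=d/a\neq 0$ and $\xi:=-b/a$, as required.

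For the converse, assume $\mu-k\nu=\xi(\lambda\tilde x)-\xi(\tilde x)$ with $k\in\bb C^*$ and $\xi$ holomorphic. I would define
\(
h(\tilde x)=\begin{pmatrix}1&-\xi(\tilde x)\\0&k\end{pmatrix},
\)
which is holomorphic with $\det h=k\neq 0$, and check directly that $h(\lambda\tilde x)f(\lambda,\tilde x)=f'(\lambda,\tilde x)h(\tilde x)$ by computing both sides: both equal $\begin{pmatrix}1 & \mu-\xi(\lambda\tilde x)\\ 0 & k\end{pmatrix}$ in view of the assumed identity. This exhibits the equivalence $f\sim f'$ explicitly.

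The only delicate point is the case distinction $c=0$ vs.\ $c\neq 0$: invoking Theorem~\ref{triv} to rule out $c\neq 0$ is essential, since otherwise one gets spurious equivalences that do not preserve the affine structure of the upper-triangular factors. Everything else is a routine matrix computation combined with the standard fact that a bounded (here: doubly periodic) entire function on $\bb C$ is constant.
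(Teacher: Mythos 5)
Your proposal is correct and follows essentially the same route as the paper: expand the $2\times 2$ matrix identity, use double periodicity to force $c$ (and then $a$, $d$) to be constant, invoke Theorem~\ref{triv} together with non-triviality of the bundles to exclude $c\neq 0$, and exhibit an explicit upper-triangular $h$ for the converse. The only differences are cosmetic (you place $h$ on the side prescribed by the paper's original definition of equivalence and normalize $h$ to $\begin{pmatrix}1&-\xi\\0&k\end{pmatrix}$ rather than keeping general constants $a$, $d$), and both choices yield the same statement since $k$ ranges over all of $\bb C^*$.
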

\begin{proof}
Suppose the factors of automorphy
\(
f(\lambda,\xx)=
\begin{pmatrix}
1&\mu(\lambda,\xx)\\
0&1
\end{pmatrix}
\) and
\(
f'(\lambda,\xx)=
\begin{pmatrix}
1&\nu(\lambda,\xx)\\
0&1
\end{pmatrix}
\) are
equivalent. This  means that
$f(\lambda,\xx)h(\xx)=h(\lambda\xx)f(\lambda,\xx)$ for some
holomorphic function $h:\bb C=\XX\ra \GL_2(\bb C)$. Let
\[
h(\xx)=
\begin{pmatrix}
a(\xx)&b(\xx)\\
c(\xx)&d(\xx)
\end{pmatrix}.
\] The condition for equivalence of $f$ and $f'$ can be rewritten in
the following way:
\begin{align*}
\begin{pmatrix}
1&\mu(\lambda,\xx)\\
0&1
\end{pmatrix}
\begin{pmatrix}
a(\xx)&b(\xx)\\
c(\xx)&d(\xx)
\end{pmatrix}
&=
\begin{pmatrix}
a(\lambda\xx)&b(\lambda\xx)\\
c(\lambda\xx)&d(\lambda\xx)
\end{pmatrix}
\begin{pmatrix}
1&\nu(\lambda,\xx)\\
0&1
\end{pmatrix}
\\
\begin{pmatrix}
a(\xx)+c(\xx)\mu(\lambda,\xx)&b(\xx)+d(\xx)\mu(\lambda,\xx)\\
c(\xx)&d(\xx)
\end{pmatrix}
&=
\begin{pmatrix}
a(\lambda\xx)&a(\lambda\xx)\nu(\lambda,\xx)+b(\lambda\xx)\\
c(\lambda\xx)&c(\lambda\xx)\nu(\lambda,\xx) + d(\lambda\xx)
\end{pmatrix}.
\end{align*}
This  leads to the system of equations
\begin{align*}
\begin{cases}
a(\xx)+c(\xx)\mu(\lambda,\xx)=a(\lambda\xx)\\
b(\xx)+d(\xx)\mu(\lambda,\xx)=a(\lambda\xx)\nu(\lambda,\xx)+b(\lambda\xx)\\
c(\xx)=c(\lambda\xx)\\
d(\xx)=c(\lambda\xx)\nu(\lambda,\xx) + d(\lambda\xx).
\end{cases}
\end{align*}
The third equation means that $c$ is a double periodic
function. Therefore, $c$ should be a constant function.

If $c\neq 0$
from the first and the last equations using Theorem~\ref{triv} one
concludes that $f$ and $f'$ define the trivial bundle.

In  the case $c=0$ one has
\begin{align*}
\begin{cases}
a(\xx)=a(\lambda\xx)\\
b(\xx)+d(\xx)\mu(\lambda,\xx)=a(\lambda\xx)\nu(\lambda,\xx)+b(\lambda\xx)\\
d(\xx)=d(\lambda\xx),
\end{cases}
\end{align*}
i. e., as above, $a$ and $d$ are constant and both not equal to zero
since $\det(h)\neq 0$. Finally one concludes that
\begin{align}
d\mu(\lambda,\xx)-a\nu(\lambda,\xx)=b(\lambda\xx)-b(\xx),\quad a,d\in
\bb C, \quad ad\neq 0\label{*}
\end{align}

Vice versa, if $\mu$ and $\nu$ satisfy~(\ref{*})
for
\(
h(\xx)=
\begin{pmatrix}
a&b(\xx)\\
0&d
\end{pmatrix}
\)
we have

\begin{align*}
f(\lambda,\xx)h(\xx)=
&\begin{pmatrix}
1&\mu(\lambda,\xx)\\
0&1
\end{pmatrix}
\begin{pmatrix}
a&b(\xx)\\
0&d
\end{pmatrix}=
\begin{pmatrix}
a&b(\xx)+d\mu(\lambda, \xx)\\
0&d
\end{pmatrix}=\\
&\begin{pmatrix}
a&b(\lambda\xx)+a\nu(\lambda, \xx)\\
0&d
\end{pmatrix}=
\begin{pmatrix}
a&b(\lambda\xx)\\
0&d
\end{pmatrix}
\begin{pmatrix}
1&\nu(\lambda,\xx)\\
0&1
\end{pmatrix}=
h(\lambda\xx)f(\lambda,\xx).
\end{align*}

 This means
that $f$ and $f'$ are equivalent.
\end{proof}
\subsection{Higher dimensional complex tori}
One can also consider higher dimensional complex tori. Let $\Gamma\subset
\bb C^g$ be a lattice,
\[
\Gamma=\Gamma_1\times\dots
\times\Gamma_g,\quad \Gamma_i=\bb Z+ \bb Z\tau_i,\quad \Im \tau >0.
\]
Then as for one dimensional complex tori  we obtain that $X=\bb
C^g/\Gamma$ is a complex manifold. Clearly the map
\[
\bb C^g\ra \bb C^g/\Gamma=X, \quad x\mto [x]
\]
is the universal covering of $X$. Since all vector  bundles on $C^g$
are trivial, we obtain a one-to-one correspondence between equivalence
classes of
$r$-dimensional factors of
automorphy
\[
f:\Gamma\times \bb C^g\ra \GL_r(\bb C)
\]
and vector bundles of rank $r$ on $X$.

Let $\Gamma=\bb Z^g+\Omega \bb Z^g$, where $\Omega$ is a  symmetric
complex
$g\times g$ matrix with positive definite real part. Note that $\Omega$
is a generalization of $\tau$ from  one dimensional case.

For any theta-characteristic $\xi=\Omega a+b$, where $a\in \bb R^g$, $b\in \bb R^g$ there is a holomorphic function
$\theta_\xi:\bb C^g\ra \bb C$ defined by
\begin{align*}
\theta_\xi(z)=\theta^a_b(z, \Omega) =\sum\limits_{n\in \bb
  Z^g}\exp(\ \pi i (n+a)^t\Omega (n+a)\tau\ )\exp(\ 2\pi
i (n+a)^t\Omega(z+b)\ ),
\end{align*}
which satisfies
\[
\theta_\xi(\gamma+ z)=\exp(2\pi i a^t \gamma -\pi i
p^t\Omega p - 2 \pi i p^t(z+\xi))\theta_\xi(z)=e_\xi(\gamma,
  z)\theta_\xi(z),
\]
where $\gamma=\Omega p+q$ and  $e_\xi(\gamma,z)=\exp(2\pi i a^t \gamma -\pi i
p^t\Omega p - 2 \pi i p^t(z+\xi))$.
 Since
\[
e_\xi(\gamma_1+\gamma_2,z)=e_\xi(\gamma_1,\gamma_2+z)e_\xi(\gamma_2,z),
\]
we conclude that $e_\xi(\gamma, z)$ is a factor of automorphy.

As above $\theta_\xi(z)$ defines a section of $E(e_\xi(\gamma, z))$.

For more detailed information on  higher dimensional theta functions
see~\cite{MumThetaI, MumThetaII, MumThetaIII}.

\subsection{Factors of automorphy depending only on the
  $\tau$-direction of the \hbox{lattice $\Gamma$}}
\label{subsection:only TAU factors of automorphy}
Here $X$ is  a complex torus, $X=\bb C/\Gamma$, $\Gamma=\bb
Z\tau+\bb Z$, $\Im\tau > 0$.
Denote $q=e^{2\pi i \tau}$. Consider the canonical projection
\[
\pr:\bb C^*\ra \bb C^*/<q>, \quad u\ra [u]=u<q>.
\]
 Clearly one
can equip $\bb C^*/<q>$ with the quotient
topology. Therefore, there is a natural complex structure on $\bb C^*/<q>$.

Consider the homomorphism
\[
\bb C\stackrel{\exp}{\ra}\bb C^*\stackrel{\pr}{\ra} \bb C^*/<q> ,\quad z\mto e^{2\pi i z}\mto [e^{2\pi i z}].
\]It is clearly surjective. An element $z\in \bb C$ is in the kernel of this homomorphism if and
only if $e^{2\pi i z}=q^k=e^{2\pi i k \tau}$ for some integer $k$. But
this holds if and only if  $z-k\tau\in \bb Z$ or, in other words, if
$z\in \Gamma$. Therefore, the kernel of the map is exactly $\Gamma$,
and we obtain an isomorphism of groups
\[
iso:\bb C/\Gamma\ra \bb C^*/<q>= \bb C^*/\bb Z,\quad [z]\ra [e^{2\pi i z}].
\]

Since the diagram
\[
\xymatrix{
\bb C\ar[r]^{\pr}\ar[d]_{\exp}&\ar@{<->}[d]^{iso} \bb C/\Gamma\\
\bb C^*\ar[r]^{\pr}& \bb C^*/\bb Z\ar@{=}[r]&\bb C^*/<q>
}
\] is commutative, we conclude that the complex structure on $\bb
C^*/<q>$ inherited  from $\bb C/\Gamma$ by the isomorphism $iso$
coincides with  the natural complex structure on $\bb C^*/<q>$. Therefore,
$iso$ is an isomorphism of complex manifolds.
Thus complex tori can be represented as $\bb C^*/<q>$, where $q=e^{2\pi
i \tau}$, $\tau\in \bb C$, $\Im \tau>0$.

So for any complex torus $X=\bb C^*/<q>$ we have a natural surjective holomorphic
map
\[
\bb C^*\ra \bb C^*/<q>=X,\quad u\ra [u].
\]
This map is moreover a covering of $X$. Consider the group $\bb Z$. It
acts holomorphically on $X=\bb C^*$:
\[
\bb Z\times \bb C^*\ra \bb C^*,\quad (n,u)\mto q^nu.
\]
Moreover, since $\pr(q^nu)=\pr(u)$, $\bb Z$ is naturally identified
with a subgroup in the group  of deck transformations $\Deck(X/\bb C^*)$. It is easy
to see that $\bb Z$ satisfies the property \T. We obtain that there is
a one-to-one correspondence between classes of isomorphism of vector
bundles over $X$ and classes of equivalence of
factors of automorphy
\[
f:\bb Z\times\bb C\ra\GL_r(\bb C).
\]

Consider the following action of $\Gamma$ on $\bb C^*$:
\[
\Gamma\times\bb C^*\ra \bb C^*;\quad (\lambda,u)\mto \lambda u\eqdef
e^{2\pi i \lambda}u
\]
Let $A:\Gamma\times \bb C^*\ra \GL_r(\bb C)$ be a holomorphic function
satisfying
\[
A(\lambda+\lambda',u)=A(\lambda,\lambda'u)A(\lambda',u)\eqno{(*)}
\]
for all $\lambda,\lambda'\in \Gamma$. We call such functions $\bb C^*$-factors of automorphy. Consider the map
\[
\id_{\Gamma}\times \exp:\Gamma\times \bb C \ra  \Gamma\times \bb C^*
,\quad
(\lambda,x)\ra(\lambda,e^{2\pi i x})
\]

Then the function
\[
f_A=A\circ(\id_{\Gamma}\times \exp):\Gamma\times \bb C\ra \GL_r(\bb C)
\]
is an $r$-dimensional factor of
automorphy, because
\begin{align*}
f_A(\lambda+\lambda',x)=A(\lambda+\lambda',e^{2\pi i
  x})=A(\lambda,e^{2\pi i \lambda'}e^{2\pi i x})A(\lambda', e^{2\pi i
  x})=\\
A(\lambda,e^{2\pi i (\lambda'+x)})A(\lambda', e^{2\pi i
  x})=f_A(\lambda,\lambda'+x)f_A(\lambda',x).
\end{align*}
So, factors of automorphy on $\bb C^*$ define factors of automorphy on
$\bb C$.

We restrict ourselves to factors of automorphy  $f:\Gamma\times \bb C\ra \GL_r(\bb
C)$ with the property
\begin{align}
\label{onlyM}
f(m\tau+n,x)=f(m\tau,x),\quad m,n\in \bb Z.
\end{align}
It follows from this property that $f(n,x)=f(0,x)=\id_{\bb C^r}$. Therefore,
\[
f(\lambda+k,x)=f(\lambda,k+x)f(k,x)=f(\lambda,k+x)\text{ for all $\lambda
\in \Gamma$, $k\in \bb Z$}
\] and it is possible to define the function
\begin{align*}
A_f:\Gamma\times \bb C^* \ra \GL_r(\bb C),\quad
(\lambda,e^{2\pi i
  x})\mto f(\lambda, x),
\end{align*}
which is well-defined because from $e^{2\pi i x_1}=e^{2\pi i x_2}$
follows $x_1=x_2+k$ for some $k\in \bb Z$ and
$f(\lambda,x_1)=f(\lambda,x_2+k)=f(\lambda,x_2)$.

Consider $A$ with the  property $A(m\tau+n,u)=A(m\tau,
u)\eqdef A(m,u)$. Then clearly $f_A(m\tau+n,u)=f_A(m\tau,
u)$.
So for any $\bb C^*$-factor of automorphy $A:\Gamma\times\bb C^*\ra
\GL_r(\bb C)$ with the property $A(m\tau+n,u)=A(m\tau,u)$ one obtains
the factor of automorphy $f_A$ satisfying~(\ref{onlyM}). We proved the
following
\begin{tr}\label{polauto}
Factors of automorphy $f:\Gamma\times \bb C\ra \GL_r(\bb C)$ with the property~(\ref{onlyM}) are in a one-to-one correspondence with
$\bb C^*$-factors of automorphy with property $A(m\tau+n,u)=A(m\tau,u)$.
\end{tr}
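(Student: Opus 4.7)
The theorem reduces to checking that the two assignments $A\mapsto f_A$ and $f\mapsto A_f$ defined in the paragraphs preceding the statement land in the correct classes and are mutually inverse. The plan is to verify these two points in turn; no new ideas beyond what has already been set up are needed.

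For the forward direction, given a $\bb C^*$-factor of automorphy $A$ with $A(m\tau+n,u)=A(m\tau,u)$, the computation just before the theorem shows that $f_A\defeq A\circ(\id_\Gamma\times\exp)$ is a factor of automorphy, and the invariance $f_A(m\tau+n,x)=A(m\tau+n,e^{2\pi i x})=A(m\tau,e^{2\pi i x})=f_A(m\tau,x)$ gives property~(\ref{onlyM}). For the reverse direction, starting with $f$ satisfying~(\ref{onlyM}), well-definedness of $A_f(\lambda,e^{2\pi i x})\defeq f(\lambda,x)$ is the content of the discussion in the text. Holomorphicity of $A_f$ in $u$ follows because $\exp\colon\bb C\ra\bb C^*$ is a local biholomorphism: on any simply connected open $U\subset\bb C^*$ there is a holomorphic branch of $\frac{1}{2\pi i}\log$, and on $\Gamma\times U$ the function $A_f$ is literally the composition of this branch with $f$. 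The cocycle relation $(*)$ for $A_f$ is then a direct rewriting of the cocycle relation for $f$:
\[
A_f(\lambda+\lambda',u)=f(\lambda+\lambda',x)=f(\lambda,\lambda'+x)\,f(\lambda',x)=A_f(\lambda,\lambda' u)\,A_f(\lambda',u),
\]
where $u=e^{2\pi i x}$ and $\lambda' u=e^{2\pi i\lambda'}u=e^{2\pi i(\lambda'+x)}$. The invariance $A_f(m\tau+n,u)=A_f(m\tau,u)$ is the exact translation of~(\ref{onlyM}).

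Finally, the two compositions are the identity by direct substitution: $f_{A_f}(\lambda,x)=A_f(\lambda,e^{2\pi i x})=f(\lambda,x)$, and $A_{f_A}(\lambda,u)=f_A(\lambda,x)=A(\lambda,e^{2\pi i x})=A(\lambda,u)$, the latter being independent of the choice of $x$ with $e^{2\pi i x}=u$ since $A$ depends only on $u$. The only step that is not pure formality is the holomorphicity of $A_f$, but this is routine given that $\exp$ is a covering map, so I do not expect a real obstacle.
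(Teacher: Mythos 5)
Your proposal is correct and follows essentially the same route as the paper, whose ``proof'' is precisely the preceding discussion constructing $f_A$ and $A_f$; you merely make explicit a few points the paper leaves implicit (the cocycle relation $(*)$ for $A_f$, holomorphicity of $A_f$ via local branches of $\frac{1}{2\pi i}\log$, and the check that the two assignments are mutually inverse). No gaps.
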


Now we want to translate the conditions for factors of automorphy with the
property~(\ref{onlyM}) to be equivalent in the language of $\bb
C^*$-factors of automorphy with the same property.
\begin{tr}
Let $f$, $f'$ be $r$-factors of automorphy with the property (\ref{onlyM}). Then $f\sim
f'$ if and only if there exists a holomorphic function $B:\bb C^*\ra
\GL_r(\bb C)$ such that
\[
A_f(m,u)B(u)=B(q^{m}u)A_{f'}(m,u))
\]
for $q\defeq e^{2\pi i \tau}$, where $A(m,u)\defeq A(m\tau,u)$. In
this case we also say $A_f$ is equivalent to $A_{f'}$ and write $A_f\sim A_{f'}$.
\end{tr}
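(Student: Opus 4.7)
The plan is a direct translation between equivalence of factors via the universal covering $\exp:\bb C\to\bb C^*$. The crucial preliminary observation is that any factor of automorphy $f$ satisfying~(\ref{onlyM}) is trivial on $\bb Z\subset\Gamma$. Indeed, setting $m=0$ in~(\ref{onlyM}) gives $f(n,x)=f(0,x)$ for every $n\in\bb Z$, and the cocycle relation applied at $\lambda=\lambda'=0$ forces $f(0,x)=f(0,x)^2$, hence $f(0,x)=\id$. Consequently $f(n,x)=f'(n,x)=\id$ for all $n\in\bb Z$, which is exactly what is needed to push the equivalence through the kernel of $\exp$.

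For the forward direction, assume $f\sim f'$ via a holomorphic $h:\bb C\to\GL_r(\bb C)$ satisfying $h(\lambda+x)f(\lambda,x)=f'(\lambda,x)h(x)$. Specializing to $\lambda=n\in\bb Z$ and using the preliminary observation gives $h(n+x)=h(x)$, so $h$ descends to a holomorphic map $B_0:\bb C^*\to\GL_r(\bb C)$ with $h(x)=B_0(e^{2\pi i x})$. Specializing instead to $\lambda=m\tau$ and writing $u=e^{2\pi i x}$ (so that $e^{2\pi i(m\tau+x)}=q^{m}u$) converts the equivalence into $B_0(q^{m}u)A_f(m,u)=A_{f'}(m,u)B_0(u)$. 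Setting $B:=B_0^{-1}$ and inverting this identity yields exactly $A_f(m,u)B(u)=B(q^{m}u)A_{f'}(m,u)$. Conversely, given such a $B$, put $h(x):=B(e^{2\pi i x})^{-1}$, which is a holomorphic, manifestly $\bb Z$-periodic map $\bb C\to\GL_r(\bb C)$. For an arbitrary $\lambda=m\tau+n\in\Gamma$, the $\bb Z$-periodicity of $h$ together with~(\ref{onlyM}) applied to both $f$ and $f'$ reduces the desired identity $h(\lambda+x)f(\lambda,x)=f'(\lambda,x)h(x)$ to the case $\lambda=m\tau$, which is precisely the inverse of the hypothesis on $B$.

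There is no serious obstacle here; the argument is essentially bookkeeping. The only mild subtlety worth flagging is the tracking of inverses: the convention for equivalence $h(\lambda y)f(\lambda,y)=f'(\lambda,y)h(y)$ puts $f$ and $f'$ in asymmetric positions, whereas the theorem's relation $A_f(m,u)B(u)=B(q^{m}u)A_{f'}(m,u)$ swaps their roles, so the bijection $h\leftrightarrow B$ is necessarily $B=h^{-1}$ rather than $B=h$. Once this is noted, both implications are formal manipulations relying only on the preliminary triviality $f(n,x)=\id$ and on the definition $A_f(m,u)=f(m\tau,x)$ with $u=e^{2\pi i x}$.
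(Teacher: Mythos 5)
Your proof is correct and follows essentially the same route as the paper's: use the fact that property~(\ref{onlyM}) forces $f(n,x)=\id$ so that the equivalence function $h$ is $\bb Z$-periodic and descends through $\exp$ to a function $B$ on $\bb C^*$, and conversely lift $B$ back to $h=B\circ\exp$ (up to the inverse). The only difference is cosmetic: the paper tacitly uses the symmetric reformulation $f(\lambda,x)h(x)=h(\lambda+x)f'(\lambda,x)$ of the equivalence so that $B$ is the descent of $h$ itself, whereas you work from the literal definition and correctly obtain $B$ as the descent of $h^{-1}$ --- a point you rightly flag.
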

\begin{proof}
Let $f\sim f'$. By definition it means that there exists a holomorphic
function $h:\bb C\ra \GL_r(\bb C)$ such that
$f(\lambda,x)h(x)=h(\lambda x)f'(\lambda,x)$. Therefore, from
$f(n,x)h(x)=h(n+x)f'(n,x)$ and $f(n,x)=f'(n,x)=\id_{\bb C^r}$ it
follows $h(x)=h(n+x)$ for all $n\in \bb Z$. Therefore, the function
\[
B:\bb C^*\ra \GL_r(\bb C),\quad
e^{2\pi i x}\mto h(x)
\]
 is well-defined. We have
\begin{align*}
A_f(m,e^{2\pi i x})B(e^{2\pi i
  x})=&f(m\tau,x)h(x)=h(m\tau+x)f'(m\tau,x)=\\&B(e^{2\pi i
  (m\tau+x)})f'(m,e^{2\pi i x})=B(q^me^{2\pi i
  x})A_{f'}(m,e^{2\pi i x}).
\end{align*}
Vice versa, let $B$ be such that
$A_f(m,u)B(u)=B(q^{m}A_{f'}(m,u))$. Define $h=B\circ \exp$. We obtain
\begin{align*}
f(m\tau+n,x)&h(x)=A_f(m\tau+n,e^{2\pi i x})B(e^{2\pi i x})=\\
&B(q^me^{2\pi i x})A_{f'}(m\tau+n,e^{2\pi i x})=B(e^{2\pi
i (m\tau+x)})A_{f'}(m\tau+n,e^{2\pi i x})=\\
&B(e^{2\pi i (m\tau+n+x)})A_{f'}(m\tau+n,e^{2\pi i x})=h(m\tau +n+x)f'(m\tau+n,x),
\end{align*}
which means that $f\sim f'$ and completes the proof.
\end{proof}
\begin{rem}
 The last two  theorems allow us  to embed the set $Z^1(\bb Z, r)$ of factors of automorphy $\bb Z\times X\ra
 \GL_r(\bb C)$ to the set $Z^1(\Gamma, r)$. The embedding is
\[
\Psi:Z^1(\bb Z, r)\ra Z^1(\Gamma, r), \quad f\mto g,\quad g(n\tau+m,x)\defeq f(n,x).
\]
Two factors of automorphy
 from $Z^1(\bb Z, r)$ are equivalent if and only if their images under
 $\Psi$ are equivalent in $Z^1(\Gamma, r)$.
That is why it is enough to consider only factors of automorphy
\[
\Gamma\times \bb C \ra \GL_r(\bb C)
\]
satisfying~(\ref{onlyM}).
\end{rem}
\begin{cor}A factor of automorphy $f$ with property (\ref{onlyM}) is trivial if and
  only if $A_f(m,u)=B(q^mu)B(u)^{-1}$ for some holomorphic function
  $B:\bb C^*\ra \GL_r(\bb C)$.
\end{cor}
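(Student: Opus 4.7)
The plan is to deduce this corollary as a direct application of the preceding theorem to the special case where one of the two factors of automorphy is the constant identity.

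First I would recall that a factor of automorphy $f$ is called trivial precisely when $E(f)$ is the trivial bundle, and by Theorem~\ref{automain} this is equivalent to $f \sim f_0$, where $f_0 : \Gamma \times \bb C \ra \GL_r(\bb C)$ is the constant factor $f_0(\lambda, x) \defeq \id_{\bb C^r}$. This $f_0$ manifestly satisfies property~(\ref{onlyM}), and its associated $\bb C^*$-factor is the constant $A_{f_0}(m, u) = \id_{\bb C^r}$.

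Now apply the preceding theorem to the pair $(f, f_0)$: since both factors satisfy (\ref{onlyM}), we have $f \sim f_0$ if and only if there exists a holomorphic $B : \bb C^* \ra \GL_r(\bb C)$ such that
\[
A_f(m, u) B(u) = B(q^m u) A_{f_0}(m, u) = B(q^m u).
\]
Multiplying on the right by $B(u)^{-1}$ (which is holomorphic since $B$ takes values in $\GL_r(\bb C)$) yields the claimed criterion $A_f(m, u) = B(q^m u) B(u)^{-1}$.

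There is essentially no obstacle here; the corollary is a one-line specialization of the previous theorem, and the only thing to verify is that the trivial bundle corresponds to the constant identity factor of automorphy (already built into the construction of $E(f)$ in Theorem~\ref{AutoB}) and that this trivial factor satisfies~(\ref{onlyM}), both of which are immediate.
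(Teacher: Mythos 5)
Your proposal is correct and is exactly the intended derivation: the paper states this as an immediate corollary of the preceding equivalence theorem, obtained by taking $f'$ to be the constant identity factor (which satisfies~(\ref{onlyM}) and has $A_{f'}(m,u)=\id$), so that triviality of $E(f)$, i.e.\ $f\sim f'$, becomes $A_f(m,u)B(u)=B(q^mu)$. Nothing is missing.
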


\begin{tr}\label{tr:A(1)}
Let $A$ be a $\bb C^*$-factor of automorphy.
$A(m,u)$ is uniquely determined by $A(u)\defeq A(1,u)$.
\begin{align}
A(m,u)&=A(q^{m-1}u)\dots A(qu)A(u),\quad m>0\\
A(-m,u)&=A(q^{-m}u)^{-1}\dots A(q^{-1}u)^{-1},\quad m>0.\label{matdef}
\end{align}
$A(m,u)$ is equivalent to $A'(m,u)$ if and only if
\begin{align}
\label{equivA}
A(u)B(u)=B(qu)A'(u)
\end{align}
 for some holomorphic function  $B:\bb C^*\ra
\GL_r(\bb C)$. In particular
$A(m,u)$ is trivial iff $A(u)=B(qu)B(u)^{-1}$.
\end{tr}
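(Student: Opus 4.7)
The plan is to exploit the cocycle identity
\begin{equation*}
A(m+n,u) = A(m,q^n u)\,A(n,u),
\end{equation*}
which is exactly condition $(*)$ specialized to the $\bb Z$-action $m\cdot u = q^m u$ on $\bb C^*$. All three assertions will follow from this relation together with routine induction on $m$.

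For the explicit formulas, I first plug in $m=n=0$ to obtain $A(0,u) = A(0,u)^2$, hence $A(0,u) = I$ since $A$ takes values in $\GL_r(\bb C)$. For $m>0$ I would induct on $m$ using the decomposition $A(m,u) = A(m-1,qu)\,A(1,u) = A(m-1,qu)\,A(u)$, which telescopes into the claimed product $A(q^{m-1}u)\cdots A(qu)\,A(u)$. For negative indices, applying the cocycle relation to $m + (-m) = 0$ yields $A(-m,u) = A(m, q^{-m}u)^{-1}$; substituting the positive-$m$ formula evaluated at $q^{-m}u$ and reversing factor order upon inversion produces $A(q^{-m}u)^{-1}\cdots A(q^{-1}u)^{-1}$.

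For the equivalence criterion, the forward direction is simply the instance $m=1$ of the definition of equivalence for $\bb C^*$-factors of automorphy. For the converse, assume $A(u)B(u) = B(qu)A'(u)$ and induct on $m>0$ to show $A(m,u)B(u) = B(q^m u)A'(m,u)$. The inductive step reads
\begin{align*}
A(m,u)B(u)
&= A(q^{m-1}u)\,A(m-1,u)\,B(u) = A(q^{m-1}u)\,B(q^{m-1}u)\,A'(m-1,u)\\
&= B(q^m u)\,A'(q^{m-1}u)\,A'(m-1,u) = B(q^m u)\,A'(m,u),
\end{align*}
where the first line uses the inductive hypothesis and the second line uses (\ref{equivA}) evaluated at $q^{m-1}u$. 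Negative $m$ then follows by taking inverses of both sides of the already established identity. The triviality statement is the case $A'\equiv I$ of this criterion.

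There is no deep obstacle; the only delicate point is keeping the bookkeeping of the cocycle relation consistent when passing to negative $m$, where inverting a product reverses the order of the factors.
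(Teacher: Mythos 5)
Your proposal is correct and follows essentially the same route as the paper: the cocycle identity $A(m+n,u)=A(m,q^nu)A(n,u)$, induction for $m>0$, the reduction $A(-m,u)=A(m,q^{-m}u)^{-1}$ for negative indices, and the telescoping/inversion argument for the equivalence criterion. The only cosmetic difference is which factor you peel off in the induction; this changes nothing of substance.
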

\begin{proof}
Since $A(1,u)=A(u)$ the first formula holds for $m=1$. Therefore,
\[
A(m+1,u)=A(1,q^mu)A(m,u)=A(q^m)A(m,u)
\] and we prove the first formula by induction.

Now $\id=A(0,u)=A(m-m,u)=A(m,q^{-m}u)A(-m, u)$ and hence
\begin{align*}
A(-m,u)=&A(m,q^{-m}u)^{-1}=(A(q^{m-1}q^{-m}u)\dots
A(qq^{-m}u)A(q^{-m}u))^{-1}=\\&A(-m,u)=A(q^{-m}u)^{-1}\dots A(q^{-1}u)^{-1}
\end{align*}
which proves the second formula.

If $A(m,u)\sim A'(m,u)$ then clearly (\ref{equivA}) holds.

Vice versa, suppose $A(u)B(u)=B(qu)A'(u)$. Then
\begin{align*}
&A(m,u)B(u)=A(q^{m-1}u)\dots A(qu)A(u)B(u)=\\&A(q^{m-1}u)\dots A(qu)B(qu)A'(u)=\dots=B(q^mu)A'(q^{m-1}u)\dots A'(qu)A'(u)=\\&B(q^mu)A'(m,u)
\end{align*} for $m>0$.

Since $A(-m,u)=A(m,q^{-m}u)^{-1}$ we have
\begin{align*}
A(-m,u)B(u)=&A(m,q^{-m}u)^{-1}B(u)=(B(u)^{-1}A(m,q^{-m}u))^{-1}=\\
&(B(u)^{-1}A(m,q^{-m}u)B(q^{-m}u)B(q^{-m}u)^{-1})^{-1}=\\
&(B(u)^{-1}B(q^mq^{-m}u)A'(m,q^{-m}u)B(q^{-m}u)^{-1})^{-1}=\\
&(B(u)^{-1}B(u)A'(m,q^{-m}u)B(q^{-m}u)^{-1})^{-1}=\\
&B(q^{-m}u)A'(m,q^{-m}u)^{-1}=B(q^{-m}u)A'(-m,u),
\end{align*}
which completes the proof.
\end{proof}
\begin{rem}
Theorem~\ref{tr:A(1)} means that all the information about a vector bundle of rank $r$ on a complex torus can be encoded by a holomorphic function $\bb C^*\ra \GL_r(\bb C)$.
\end{rem}

For a holomorphic function $A:\bb C^*\ra \GL_r(\bb C)$, let us denote by $E(A)$ the corresponding vector bundle on $X$.

\begin{tr}\label{tensorPol}
Let $A: \bb C^*\ra \GL_n(\bb C)$, $B:\bb C^*\ra \GL_m(\bb C)$ be two
holomorphic maps. Then $E(A)\ten E(B)\simeq E(A\ten B)$.
\end{tr}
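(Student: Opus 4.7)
The strategy is to lift the statement from the level of $\bb C^*$-factors of automorphy back to the level of ordinary factors of automorphy on $\Gamma\times \bb C$, where the tensor-product compatibility has already been established in Corollary~\ref{tensor}.

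First I would unpack the definitions. By Theorem~\ref{tr:A(1)} the holomorphic map $A:\bb C^*\ra \GL_n(\bb C)$ determines a $\bb C^*$-factor of automorphy $A(m,u)=A(q^{m-1}u)\cdots A(qu)A(u)$ for $m>0$ (and the analogous formula for $m<0$), and similarly $B$ determines $B(m,u)$. By Theorem~\ref{polauto} these in turn determine factors of automorphy $f_A,f_B:\Gamma\times \bb C\ra \GL(\bb C)$ satisfying the periodicity (\ref{onlyM}), and by construction $E(A)=E(f_A)$ and $E(B)=E(f_B)$. Similarly the holomorphic function $A\ten B:\bb C^*\ra \GL_{nm}(\bb C)$, defined pointwise by $(A\ten B)(u)\defeq A(u)\ten B(u)$, gives rise to a $\bb C^*$-factor of automorphy $(A\ten B)(m,u)$ and hence to a factor of automorphy $f_{A\ten B}$, with $E(A\ten B)=E(f_{A\ten B})$.

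Next I would verify, by a direct induction using the recursion in Theorem~\ref{tr:A(1)} and the bifunctoriality of $\ten$, that
\[
(A\ten B)(m,u)=A(m,u)\ten B(m,u)\quad\text{for every }m\in\bb Z.
\]
For $m>0$ this is immediate from
\[
(A\ten B)(q^{m-1}u)\cdots(A\ten B)(u)=\bigl(A(q^{m-1}u)\cdots A(u)\bigr)\ten\bigl(B(q^{m-1}u)\cdots B(u)\bigr),
\]
and the case $m<0$ follows from $A(-m,u)=A(m,q^{-m}u)^{-1}$ together with $(M\ten N)^{-1}=M^{-1}\ten N^{-1}$. Pulling back along $\id_\Gamma\times\exp$ as in the paragraph before Theorem~\ref{polauto}, this identity translates to
\[
f_{A\ten B}(\lambda,x)=f_A(\lambda,x)\ten f_B(\lambda,x),
\]
i.e.\ $f_{A\ten B}=f_A\ten f_B$ as factors of automorphy on $\Gamma\times \bb C$.

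Finally I would apply Corollary~\ref{tensor} (the specialization of Theorem~\ref{funct} to $\cal F=\_\ten\_$) to conclude
\[
E(A\ten B)=E(f_{A\ten B})=E(f_A\ten f_B)\simeq E(f_A)\ten E(f_B)=E(A)\ten E(B).
\]
There is no real obstacle here beyond bookkeeping; the only point requiring a moment of care is the inductive verification that the pointwise tensor product of holomorphic functions $\bb C^*\ra \GL$ produces the tensor product of the associated $\bb C^*$-factors of automorphy, so that the passage through Theorem~\ref{polauto} is compatible with~$\ten$.
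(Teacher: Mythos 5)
Your proposal is correct and follows essentially the same route as the paper: both apply Corollary~\ref{tensor} and then identify the resulting factor of automorphy with the one generated by $A(u)\ten B(u)$ via Theorem~\ref{tr:A(1)}. Your explicit inductive check that $(A\ten B)(m,u)=A(m,u)\ten B(m,u)$ simply spells out what the paper compresses into the remark that $A(1,u)\ten B(1,u)=A(u)\ten B(u)$.
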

\begin{proof}
By theorem \ref{tensor} we have
\[
E(A)\ten E(B)\simeq E(A(n,u))\ten
E(B(n,u))\simeq E(A(n,u)\ten
B(n,u)).
\]
 Since $A(1,u)\ten B(1,u)=A(u)\ten B(u)$, we obtain $E(A)\ten E(B)\simeq E(A\ten
B)$.
\end{proof}

\section{Classification of vector bundles over a complex torus}
\label{section:classification}
Here we work with factors of automorphy depending only on $\tau$, i. e., with holomorphic functions $\bb C^*\ra \GL_r(\bb C)$.

\subsection{Vector bundles of degree zero}
We return  to extensions of the type $0\ra I_1\ra E\ra I_1\ra 0$, where
$I_1$ denotes  the trivial vector bundle of rank $1$.

Theorem~\ref{triv} can be rewritten as follows.

\begin{tr}
\label{trivA}
A function $A(u)=
\begin{pmatrix}
1&a(u)\\
0&1
\end{pmatrix}$
defines  the trivial bundle if and only if $a(u)=b(qu)-b(u)$ for some
holomorphic function $b:\bb C^*\ra \bb C$.
\end{tr}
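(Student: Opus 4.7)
The plan is to deduce this from Theorem~\ref{tr:A(1)}, which already tells us that $A$ defines the trivial bundle if and only if there exists a holomorphic $B:\bb C^*\ra \GL_2(\bb C)$ with $A(u)B(u)=B(qu)$. The argument will mirror that of Theorem~\ref{triv}, but with one key replacement: where the old proof used the fact that doubly periodic entire functions on $\bb C$ are constant (Liouville), here I must use instead that holomorphic functions on $\bb C^*$ invariant under $u\mapsto qu$ descend to the compact complex torus $X=\bb C^*/\langle q\rangle$ and are therefore constant.

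For the easy (``if'') direction, given $a(u)=b(qu)-b(u)$, I simply set $B(u)=\begin{pmatrix}1&b(u)\\0&1\end{pmatrix}$ and verify by a one-line matrix multiplication that $B(qu)B(u)^{-1}=A(u)$, so by Theorem~\ref{tr:A(1)} the bundle $E(A)$ is trivial.

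For the ``only if'' direction, I start from $A(u)B(u)=B(qu)$ for some holomorphic $B:\bb C^*\ra\GL_2(\bb C)$ and write $B=\begin{pmatrix}\alpha&\beta\\\gamma&\delta\end{pmatrix}$. Comparing the four entries of $A(u)B(u)$ with $B(qu)$ yields
\[
\gamma(qu)=\gamma(u),\quad \delta(qu)=\delta(u),\quad \alpha(qu)=\alpha(u)+a(u)\gamma(u),\quad \beta(qu)=\beta(u)+a(u)\delta(u).
\]
The first two identities say $\gamma$ and $\delta$ descend to holomorphic functions on the compact torus $X$, hence are constants $c$ and $d$. Since $\det B(u)=\alpha(u)d-\beta(u)c$ is nowhere zero on $\bb C^*$, at least one of $c,d$ is nonzero. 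If $d\neq 0$, set $b(u):=\beta(u)/d$; the fourth identity gives $b(qu)-b(u)=a(u)$. If instead $d=0$ so $c\neq 0$, the third identity gives the same conclusion for $b(u):=\alpha(u)/c$.

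The only non-routine point is the replacement of the Liouville step by the observation that $\langle q\rangle$-invariance on $\bb C^*$ forces constancy via compactness of the quotient torus; apart from this, the argument is a direct adaptation of Theorem~\ref{triv}.
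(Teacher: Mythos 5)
Your proof is correct and follows essentially the same route as the paper: the paper obtains this statement simply by declaring it a rewriting of Theorem~\ref{triv}, whose proof is exactly your entry-by-entry comparison of $A(u)B(u)$ with $B(qu)$ (there carried out additively on $\bb C$, with Liouville's theorem supplying the constancy of the bottom-row entries). Your substitute for that step --- that a holomorphic function on $\bb C^*$ invariant under $u\mapsto qu$ descends to the compact quotient $\bb C^*/\langle q\rangle$ and is hence constant --- is valid, and is interchangeable with the Laurent-series argument ($b_k=q^kb_k$ with $|q|<1$) that the paper uses for the same purpose in the surrounding results.
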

\begin{cor}
\(
A(u)=
\begin{pmatrix}
1&1\\
0&1
\end{pmatrix}
\) defines a non-trivial vector bundle.
\end{cor}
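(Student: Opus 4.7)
By Theorem~\ref{trivA}, the bundle $E(A)$ is trivial if and only if the $(1,2)$-entry $a(u)=1$ can be written as $b(qu)-b(u)$ for some holomorphic $b\colon\bb C^*\to\bb C$. So the whole task reduces to showing that the functional equation
\[
b(qu)-b(u)=1
\]
has no holomorphic solution $b$ on $\bb C^*$.

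The plan is to use Laurent expansion. Since $b$ is holomorphic on the annulus $\bb C^*$, write $b(u)=\sum_{n\in\bb Z}c_nu^n$, convergent on all of $\bb C^*$. Then $b(qu)=\sum_{n\in\bb Z}c_nq^nu^n$, so
\[
b(qu)-b(u)=\sum_{n\in\bb Z}c_n(q^n-1)\,u^n.
\]
Comparing this with the constant Laurent series $1=\sum_n \delta_{n,0}u^n$ and using uniqueness of Laurent coefficients forces $c_n(q^n-1)=\delta_{n,0}$ for every $n\in\bb Z$. But at $n=0$ the left-hand side is $c_0\cdot 0=0$, while the right-hand side is $1$ — a contradiction. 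Hence no such $b$ exists, and $E(A)$ is nontrivial.

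The only step that requires any care is invoking uniqueness of the Laurent expansion, which is immediate since $b$ is holomorphic on a (degenerate) annulus containing $\bb C^*$; there is no real obstacle beyond this coefficient comparison. Note that for $n\neq 0$ the equation $c_n(q^n-1)=0$ is consistent (forcing $c_n=0$, as $|q|<1$ implies $q^n\neq 1$), so the obstruction is concentrated entirely in the constant term, reflecting the fact that the $\bb Z$-action by $u\mapsto qu$ has no fixed constants up to translation by $1$.
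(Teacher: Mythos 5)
Your proof is correct and follows essentially the same route as the paper: invoke Theorem~\ref{trivA} to reduce to the functional equation $b(qu)-b(u)=1$, expand $b$ in a Laurent series on $\bb C^*$, and observe that the constant coefficient gives $b_0-b_0=0\neq 1$. Your write-up is just a more detailed version of the paper's one-line coefficient comparison.
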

\begin{proof}
Suppose $A$ defines the trivial bundle. Then $1=b(qu)-b(u)$ for some
holomorphic function $b:\bb C^*\ra \bb C$. Considering the Laurent
series expansion $\sum\limits_{-\infty}^{+\infty}b_ku^k$ of $b$ we obtain
$1=b_0-b_0=0$ which shows that our assumption was false.
\end{proof}

Let $a:\bb C^*\ra \bb C$ be a holomorphic function such that
\(
A_2(u)=
\begin{pmatrix}
1&a(u)\\
0&1
\end{pmatrix}
\) defines non-trivial bundle, i. e., by Theorem~\ref{trivA}, there is no holomorphic
function $b:\bb C^*\ra\bb C$ such that
\[
a(u)=b(qu)-b(u).
\]
Let $F_2$  be the bundle defined by $A_2$. Then by Theorem~\ref{extension} there exists an
exact sequence
\[
0\ra I_1\ra F_2\ra I_1\ra 0.
\]

For $n\geqslant 3$ we define $A_n:\bb C^*\ra \GL_n(\bb C)$,
\[
A_n=
\begin{pmatrix}
1&a&&\\
&\ddots&\ddots&\\
&&1&a\\
&&&1
\end{pmatrix},
\] where empty entries stay for zeros.

Let  $F_n$ be the bundle defined by $A_n$. By (\ref{extension}) one sees that $A_n$ defines the extension
\[
0\ra I_1\ra F_n\ra F_{n-1}\ra 0.
\]
\begin{tr}
$F_n$ is not the trivial bundle.
The extension
\[
0\ra I_1\ra F_n\ra F_{n-1}\ra 0.
\]
is  non-trivial for all $n\geqslant 2$.
\end{tr}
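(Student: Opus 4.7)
The plan is to compute $\dim H^0(X, F_n)$ via $A_n$-theta functions and then deduce both statements by counting sections.

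By Theorem~\ref{sections} applied to the $\mathbb{C}^*$-setting (via Theorem~\ref{tr:A(1)}), global sections of $F_n$ correspond to holomorphic maps $s = (s_1, \dots, s_n)^t \colon \mathbb{C}^* \to \mathbb{C}^n$ satisfying $s(qu) = A_n(u) s(u)$. Writing this out component-wise gives
\begin{align*}
s_i(qu) &= s_i(u) + a(u)\, s_{i+1}(u), \qquad 1 \le i \le n-1,\\
s_n(qu) &= s_n(u).
\end{align*}
The last equation says $s_n$ descends to a holomorphic function on the compact manifold $X$, so $s_n$ is a constant $c \in \mathbb{C}$. Then the $(n-1)$-st equation becomes $s_{n-1}(qu) - s_{n-1}(u) = c\, a(u)$; by the standing hypothesis on $a$ (Theorem~\ref{trivA} applied to $A_2$), this forces $c=0$. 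Iterating, $s_i \equiv 0$ for all $i \ge 2$, and $s_1$ is $q$-invariant, hence constant. Therefore $\dim H^0(X, F_n) = 1$ for every $n \ge 1$ (with $F_1 \defeq I_1$).

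From this the two non-triviality statements follow immediately. If $F_n$ were trivial, then $\dim H^0(X, F_n) = n \ge 2$, contradicting the computation above; hence $F_n$ is non-trivial. If the exact sequence $0 \to I_1 \to F_n \to F_{n-1} \to 0$ split, we would have $F_n \simeq I_1 \oplus F_{n-1}$, so $\dim H^0(X, F_n) = 1 + \dim H^0(X, F_{n-1}) = 2$, again a contradiction.

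The only genuine step is the induction on $n$ bootstrapping the section count; this uses nothing beyond the hypothesis that $a(u) \ne b(qu) - b(u)$ for any holomorphic $b \colon \mathbb{C}^* \to \mathbb{C}$, which is exactly the defining assumption on $a$ coming from the non-triviality of $F_2$. No obstacle is expected: the calculation is linear and triangular, so the last coordinate always kills itself and feeds the induction.
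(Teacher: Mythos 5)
Your proof is correct, and it takes a genuinely different (and arguably cleaner) route than the paper. The paper argues directly on the equivalence of factors of automorphy: assuming $F_n$ trivial it writes $A_n(u)B(u)=B(qu)$ for an invertible $B$, shows the last row of $B$ consists of constants via Laurent series, and derives that $a$ would be a coboundary; for the splitting of the extension it repeats a similar column-by-column analysis of $A_n(u)B(u)=B(qu)\,\mathrm{diag}(1,A_{n-1})$ and contradicts invertibility of $B$. You instead compute $\dim H^0(X,F_n)=1$ once, via the theta-function description of sections, and then dispatch both claims by counting: a trivial bundle would have $h^0=n$ and a split extension would have $h^0=1+h^0(F_{n-1})=2$. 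The core calculation is the same in both treatments --- the triangular system forces the bottom entry to be $q$-periodic hence constant, and the hypothesis that $a$ is not of the form $b(qu)-b(u)$ kills that constant, feeding an induction --- but your packaging as a cohomology computation handles the two statements uniformly, avoids the second matrix manipulation entirely, and yields the extra fact $h^0(F_n)=1$, which is exactly Atiyah's characterization of $F_n$ quoted later in Corollary~\ref{Atiyahbundles}. The only step you leave implicit is that sections of $E(A)$ for a $\bb C^*$-factor of automorphy correspond to holomorphic $s:\bb C^*\ra\bb C^n$ with $s(qu)=A(u)s(u)$; this follows from Theorem~\ref{sections} applied to the covering $\bb C^*\ra\bb C^*/\langle q\rangle$ together with Theorem~\ref{tr:A(1)} (the relation for the generator propagates to all of $\bb Z$), and is routine.
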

\begin{proof}
Suppose $F_n$ is trivial. Then $A_n(u)B(u)=B(qu)$ for some
$B=(b_{ij})_{ij^n}$. In particular it means $b_{ni}(u)=b_{ni}(qu)$ for
$i=\overline{1,n}$. Let
$b_{ni}=\sum\limits_{-\infty}^{+\infty}b_{k}^{(ni)}u^k$ be the
expansion of $ b_{ni}$ in Laurent series. Then $b_{ni}(u)=b_{ni}(qu)$ implies
$b_k^{(ni)}=q^kb_k^{(ni)}$ for all $k$.

Note that $|q|<1$ because $\tau=\xi+i\eta$, $\eta>0$ and
\[
|q|=|e^{2\pi i
  \tau}|=|e^{2\pi i (\xi+i\eta)}|=|e^{2\pi i\xi}e^{-2\pi
  \eta}|=e^{-2\pi \eta}<1.
\]
Therefore, $b_k^{(ni)}=0$ for $k\neq 0$ and we conclude that $b_{ni}$
should be constant functions.

We also have
\[
b_{n-1i}(u)+b_{ni}a(n)=b_{n-1i}(qu).
\]
Since at least one of $b_{ni}$ is not equal to zero because of
invertibility of $B$, we obtain
\[
a(u)=\frac{1}{b_{ni}}(b_{n-1i}(qu)-b_{n-1i}(u))
\]
for some $i$, which contradicts the choice of $a$. Therefore, $F_n$ is
not trivial.

Assume now, that for some $n > 2$ the extension
\[
0\ra I_1\ra F_{n}\ra F_{n-1}\ra 0
\]
is trivial(for $n=2$ it is not trivial since $F_2$ is not a trivial
vector bundle). This means
$
A_{n}\sim
\begin{pmatrix}
1&0\\
0&A_{n-1}
\end{pmatrix}
$, i. e., there exists a holomorphic function $B:\bb C^*\ra \GL_n(\bb C)$,
$B=(b_{ij})_{i,j}^n$ such that
\[
A_{n}(u)B(u)=B(qu)
\begin{pmatrix}
1&0\\
0&A_{n-1}
\end{pmatrix}.
\]
Considering the elements of the first and second columns we obtain for
the first column
\begin{align*}
&b_{n1}(u)=b_{n1}(qu),\\
&b_{i1}(u)+b_{i+11}(u)a(u)=b_{i1}(qu), \quad i< n
\end{align*}
and for the second column
\begin{align*}
&b_{n2}(u)=b_{n2}(qu),\\
&b_{i2}(u)+b_{i+12}(u)a(u)=b_{i2}(qu),\quad i<n.
\end{align*}
For the first column as above considering Laurent series  we have that $b_{n1}$ should be a constant
function. If $b_{n1}\neq 0$ it follows
\[
a(u)=\frac{1}{b_{n1}}(b_{n-11}(qu)-b_{n-11}(u)),
\]
which contradicts the choice of $a$. Therefore, $b_{n1}=0$ and
$b_{n-11}(qu)=b_{n-11}(u)$, in other words $b_{n-11}$ is a
constant function. Proceeding by induction one obtains that $b_{11}$ is a
constant function and $b_{i1}=0$ for $i>1$.

For the second column absolutely analogously we obtain a similar
result: $b_{12}$ is constant, $b_{i2}=0$ for $i>1$. This contradicts the invertibility of $B(u)$ and  proves the statement.
\end{proof}

\begin{cor}\label{Atiyahbundles}
The vector bundle $F_n$ is the only indecomposable vector bundle of rank $n$ and
degree $0$ that has non-trivial sections.
\end{cor}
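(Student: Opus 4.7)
The proof naturally divides into two steps: first, confirm that $F_n$ is a rank-$n$, degree-$0$ bundle with a non-zero global section; second, invoke Atiyah's classification to conclude that $F_n$ is the unique indecomposable bundle with these properties.

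For the first step, I would apply Theorem~\ref{sections}: sections of $F_n = E(A_n)$ are in bijection with holomorphic maps $S \colon \bb C^* \to \bb C^n$ satisfying $S(qu) = A_n(u)S(u)$. The constant function $S(u) \equiv e_1$ (the first standard basis vector) qualifies, since the first column of $A_n(u)$ is $e_1$, so $A_n(u)e_1 = e_1$ identically. Hence $F_n$ admits a non-zero section. By construction the rank is $n$, and since $A_n$ is upper unipotent, $\det A_n \equiv 1$, so $\det F_n$ is trivial and $\deg F_n = 0$.

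For the second step, I would use Atiyah's classification \cite{Atiyah}. Specifically, Atiyah establishes existence, indecomposability, and uniqueness (up to isomorphism) of a distinguished bundle $F_n^{At} \in \cal E(n,0)$ characterised by admitting a non-zero section, and shows that $F_n^{At}$ is the unique non-split extension of $F_{n-1}^{At}$ by $\cal O_X$ (using $\dim \operatorname{Ext}^1(F_{n-1}^{At}, \cal O_X) = 1$). Since the preceding theorem of this paper shows our $F_n$ fits into the non-split extension $0 \to I_1 \to F_n \to F_{n-1} \to 0$, induction on $n$ (with base $F_1 = I_1 = \cal O_X$) identifies $F_n \cong F_n^{At}$. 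The corollary then follows from Atiyah's uniqueness.

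The principal obstacle is rigorously bridging the factor-of-automorphy construction of $F_n$ with Atiyah's iterated-extension description, which rests on the one-dimensionality of $\operatorname{Ext}^1(F_{n-1}, \cal O_X)$. As a partly self-contained alternative one could first compute $h^0(F_n) = 1$ by a Laurent-series argument like the one in the preceding theorem — the last coordinate $S_n$ of an $A_n$-theta function is $q$-invariant, hence constant, and the non-solvability of $a(u) = b(qu) - b(u)$ forces it to vanish; descending through the coordinates yields $S_n = \cdots = S_2 = 0$ with $S_1$ an arbitrary constant — but indecomposability and the full uniqueness still require Atiyah's results as input.
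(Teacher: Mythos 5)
Your proposal is correct and follows essentially the same route as the paper: the paper's entire proof is the citation of Atiyah's Theorem~5, which is also the crux of your uniqueness step. The extra verifications you supply (the constant section $e_1$ since $A_n(u)e_1=e_1$, $\det A_n\equiv 1$ giving degree zero, and the inductive identification of $F_n$ with Atiyah's bundle via the non-split extensions) are exactly the details the paper leaves implicit.
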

\begin{proof}
This follows from \cite[Theorem~5]{Atiyah}.
\end{proof}
So we have  that the vector bundles $F_n=E(A_n)$ are exactly $F_n$'s
defined by Atiyah in~\cite{Atiyah}.

\begin{rem}
Note that constant matrices $A$ and $B$ having the same Jordan normal
form are equivalent. This is clear because $A=SBS^{-1}$ for some
constant invertible matrix $S$, which means that $A$ and $B$ are equivalent.
\end{rem}

Consider an upper triangular matrix $B=(b_{ij})_1^n$ of the following
type:
\begin{align}
b_{ii}=1,\quad b_{ii+1}\neq 0.\label{oneequiv}
\end{align}
It is easy to see that this
matrix is equivalent to the upper triangular matrix $A$,
\begin{align}
a_{ii}=a_{ii+1}=1,\quad a_{ij}=0,\quad j\neq i+1, \quad j\neq i.\label{one}
\end{align}
In fact, these
matrices have the same characteristic polynomial $(t-1)^n$ and the
dimension of the eigenspace corresponding to the eigenvalue $1$ is equal to $1$ for both
matrices. Therefore, $A$ and $B$ have the same Jordan form. By Remark
above we obtain that $A$ and $B$ are equivalent.
We proved the following:
\begin{lemma}\label{samegord}
A matrix satisfying (\ref{oneequiv}) is equivalent to the matrix defined
by (\ref{one}). Moreover, two matrices of the type (\ref{oneequiv})
are equivalent, i. e., they define two isomorphic vector bundles.
\end{lemma}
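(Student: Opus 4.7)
The plan is to reduce the statement to the remark immediately preceding it, which asserts that two constant matrices with the same Jordan normal form define equivalent factors of automorphy: if $A = SBS^{-1}$ for a constant $S \in \GL_n(\bb C)$, the gauge transformation $h(u) \equiv S$ shows $A \sim B$. So the entire task is to verify that any matrix $B$ of type (\ref{oneequiv}) has the same Jordan normal form as the matrix $A$ defined by (\ref{one}), namely a single Jordan block of size $n$ with eigenvalue $1$.

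For this I would compute two invariants. First, the characteristic polynomial of either matrix is $(t-1)^n$, since both are upper triangular with $1$'s along the diagonal. Second, I would check that the eigenspace $\ker(B - I)$ is one-dimensional. The matrix $B - I$ is strictly upper triangular, and its superdiagonal entries $b_{ii+1}$ are non-zero by hypothesis; hence the first $n-1$ rows of $B - I$ are linearly independent (each contains a non-zero pivot on the superdiagonal), giving $\rank(B - I) = n-1$ and therefore $\dim \ker(B - I) = 1$. The same computation applied to $A$ yields dimension $1$. Since a unipotent matrix with a one-dimensional $1$-eigenspace must consist of a single Jordan block, $A$ and $B$ share the same Jordan form.

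Applying the remark then produces a constant $S$ with $A = SBS^{-1}$, so $A \sim B$ and consequently $E(A) \simeq E(B)$. The second sentence of the lemma follows by transitivity of $\sim$: any two matrices of type (\ref{oneequiv}) are each equivalent to $A$, and hence to one another. I do not foresee any substantive obstacle; the only step that requires care is the rank computation for $B - I$, but this is immediate from the non-vanishing of the superdiagonal entries.
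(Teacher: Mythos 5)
Your proof is correct and follows essentially the same route as the paper: both arguments reduce the lemma to the preceding remark (constant matrices with the same Jordan normal form give equivalent factors of automorphy via the constant gauge $h(u)\equiv S$) and then verify that any matrix of type (\ref{oneequiv}) has characteristic polynomial $(t-1)^n$ and a one-dimensional eigenspace for the eigenvalue $1$, hence a single Jordan block. Your explicit rank computation for $B-I$ merely fills in a step the paper dismisses as ``easy to see,'' and the transitivity argument for the second assertion matches the paper's implicit reasoning.
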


\begin{tr}\label{Th9}
$F_n\simeq S^{n-1}(F_2)$.
\end{tr}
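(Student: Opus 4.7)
The plan is to apply the functoriality of factors of automorphy (Theorem~\ref{funct}) to the covariant holomorphic symmetric-power functor $S^{n-1}\colon\Vect\to\Vect$, which immediately gives
\[
S^{n-1}(F_2)=S^{n-1}(E(A_2))\simeq E\bigl(S^{n-1}(A_2)\bigr),
\]
and then to match the resulting factor of automorphy $S^{n-1}(A_2)$ with $A_n$ via Lemma~\ref{samegord}.

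The key computation is to write $S^{n-1}(A_2)$ in the standard monomial basis $\{e_1^{n-1-k}e_2^{k}\}_{k=0}^{n-1}$ of $S^{n-1}\bb C^2$. Since $A_2$ fixes $e_1$ and sends $e_2$ to $a(u)e_1+e_2$, the binomial theorem yields
\[
A_2\cdot e_1^{n-1-k}e_2^{k}=\sum_{j=0}^{k}\binom{k}{j}a(u)^{k-j}\,e_1^{n-1-j}e_2^{j},
\]
so that $S^{n-1}(A_2)(u)$ is upper-triangular with $1$'s on the diagonal and super-diagonal entries $a(u),2a(u),\ldots,(n-1)a(u)$, none of which is identically zero (because $a$ is not). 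Hence $S^{n-1}(A_2)$ is of shape~(\ref{oneequiv}); so is $A_n$ by construction. Lemma~\ref{samegord} then yields $E(S^{n-1}(A_2))\simeq E(A_n)=F_n$, completing the argument.

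The only step that requires extra care is the final invocation of Lemma~\ref{samegord}. The lemma is stated for matrices of shape~(\ref{oneequiv}), but its proof goes through the pointwise Jordan normal form (via the Remark for constant matrices), so strictly speaking one must check that the triangular change of basis used to ``clean out'' the higher entries $\binom{k}{j}a(u)^{k-j}$ above the super-diagonal can be realized by a \emph{holomorphic} map $h\colon \bb C^{\ast}\to \GL_n(\bb C)$ satisfying the equivalence relation $h(qu)A_n(u)=S^{n-1}(A_2)(u)h(u)$. This reduces to iteratively solving equations of the form $\xi(qu)-\xi(u)=c(u)$; by Laurent expansion in $u$ (and using $|q|<1$) the only obstruction is the vanishing of the constant coefficient of $c$, and this can always be arranged by a suitable choice of constants at the preceding levels of the triangular reduction, as can be seen already in the baby case $n=3$ by a direct calculation. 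That is the only genuinely computational ingredient in the proof.
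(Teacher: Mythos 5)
Your overall strategy is the paper's: apply Theorem~\ref{funct} to the functor $S^{n-1}$, compute $S^{n-1}(A_2)$ in the monomial basis (your binomial computation agrees with the paper's), and then identify the resulting unitriangular matrix with $A_n$ via Lemma~\ref{samegord}. The difference is that you keep $a$ an arbitrary non-coboundary holomorphic function, and that is where the gap sits. Lemma~\ref{samegord} is a statement about \emph{constant} matrices: its proof is ``same Jordan normal form, hence conjugate by a constant $S$, hence equivalent.'' For non-constant $a$ the matrix $S^{n-1}(A_2)(u)$ is not constant, so the lemma does not apply, as you correctly notice. But your substitute argument is only a sketch. The triangular reduction forces, at each level $k$ above the superdiagonal, difference equations $\xi(qu)-\xi(u)=c(u)$ whose obstruction is the constant Laurent coefficient of $c$; these coefficients involve $[a^j]_0$ for $j\ge 2$, which need not vanish (e.g.\ $a(u)=1+u$ already gives $[a^2]_0\neq 0$ at $n=3$), and the claim that the free constants from the preceding levels can always be tuned to cancel \emph{all} of them simultaneously for general $n$ is precisely the hard content of the proof as you have set it up. Checking $n=3$ does not establish it, and no induction is given. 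As written, the proof is incomplete at its most delicate point.

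The gap is avoidable rather than fatal. Since $F_n$ is independent of the admissible choice of $a$ (Corollary~\ref{Atiyahbundles}), you may take $a\equiv 1$: the corollary to Theorem~\ref{trivA} shows the constant matrix $\left(\begin{smallmatrix}1&1\\0&1\end{smallmatrix}\right)$ defines a non-trivial extension, hence $F_2$. Then $A_2$, $S^{n-1}(A_2)$ (the matrix of binomial coefficients) and $A_n$ are all constant unitriangular matrices with one-dimensional eigenspace for the eigenvalue $1$, so they share the Jordan form $J_n(1)$ and Lemma~\ref{samegord} applies verbatim with a constant $h$. This is exactly what the paper does; if you prefer to keep $a$ general, you must actually carry out the inductive solvability argument you only assert.
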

\begin{proof}
We know that $F_2$ is defined by the constant matrix
$A_2=
\begin{pmatrix}
1&1\\
0&1
\end{pmatrix}
$. We know by Theorem~\ref{funct} that $S^n(F_2)$ is
defined by $S^n(A_2)$. We calculate $S^n(f_2)$ for $n\in \bb
N_0$. Since $f_2$ is a constant matrix, $S^n(f_2)$ is also a constant
matrix defining a map $S^n(\bb C^2)\ra S^n(\bb C^2)$. Let $e_1$, $e_2$
be the standard basis of $\bb C^2$, then $S^n(\bb C)$ has a basis
\[
\{e_1^ke_2^{n-k}|\  k=n, n-1, \dots, 0\}.
\]
Since $A_2(e_1)=e_1$ and $A_2(e_2)=e_1+e_2$, we conclude that
$e_1^ke_2^{n-k}$ is mapped to
\begin{align*}
A_2(e_1)^kA_2(e_1)^{n-k}=e_1^k(e_1+e_2)^{n-k}=e_1^k\sum\limits_{i=0}^{n-k}\binom{n-k}{i}e_1^{n-k-i}e_2^i=\sum\limits_{i=0}^{n-k}\binom{n-k}{i}e_1^{n-i}e_2^i.
\end{align*}
Therefore,
\[
S^n(A_2)=
\begin{pmatrix}
1&1&1&\dots&\binom{n}{0}\\
 &1&2&\dots&\binom{n}{1}\\
 & &1&\dots&\binom{n}{2}\\
 & & &\ddots&\vdots\\
& & &&\binom{n}{n}\\
\end{pmatrix},
\]
where empty entries stay for zero. In other words, the columns of
$S^n(A_2)$ are columns of binomial coefficients.
By Lemma~\ref{samegord} we conclude that $S^n(A_2)$ is equivalent to
$A_{n+1}$. This proves the statement of the theorem.
\end{proof}

Let $E$ be a  $2$-dimensional vector bundle over a topological space
$X$. Then there exists an isomorphism
\begin{align*}
S^p(E)\ten S^q(E)\simeq S^{p+q}(E)\oplus(\det E\ten S^{p-1}(E)\ten S^{q-1}(E)).
\end{align*}
This is the Clebsch-Gordan formula. If $\det E$ is the trivial line
bundle, then we have
\(
S^p(E)\ten S^q(E)\simeq S^{p+q}(E)\oplus S^{p-1}(E)\ten S^{q-1}(E)
\), and by iterating one gets
\begin{align}\label{G-Cl}
S^p(E)\ten S^q(E)\simeq S^{p+q}(E)\oplus
S^{p+q-2}(E)\oplus\dots \oplus S^{p-q}(E), \quad p\geqslant q.
\end{align}

\begin{tr}
$F_p\ten F_q\simeq F_{p+q-1}\oplus  F_{p+q-3}\oplus \dots\oplus
  F_{p-q+1}$ for $p\geqslant q$.
\end{tr}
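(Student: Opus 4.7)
The plan is to reduce the statement directly to the Clebsch-Gordan formula~(\ref{G-Cl}) together with Theorem~\ref{Th9}, which identifies $F_n$ with $S^{n-1}(F_2)$.

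First I would verify that $\det F_2$ is the trivial line bundle. Since $F_2$ corresponds to the constant $\bb C^*$-factor of automorphy $A_2=\begin{pmatrix}1&1\\0&1\end{pmatrix}$, Theorem~\ref{funct} (applied to the functor $\bigwedge^2:\Vect\ra\Vect$) gives $\det F_2=E(\det A_2)=E(1)$, which is trivial.

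Next, using Theorem~\ref{Th9} I would write $F_p\simeq S^{p-1}(F_2)$ and $F_q\simeq S^{q-1}(F_2)$. Then Theorem~\ref{tensorPol} (or more generally Corollary~\ref{tensor}) identifies $F_p\ten F_q$ with $S^{p-1}(F_2)\ten S^{q-1}(F_2)$. Since $\det F_2$ is trivial and $p-1\geqslant q-1$, the iterated Clebsch-Gordan formula~(\ref{G-Cl}) yields
\[
S^{p-1}(F_2)\ten S^{q-1}(F_2)\simeq S^{p+q-2}(F_2)\oplus S^{p+q-4}(F_2)\oplus\dots\oplus S^{p-q}(F_2).
\]
Applying Theorem~\ref{Th9} once more in the reverse direction (i.e., $S^k(F_2)\simeq F_{k+1}$) translates the summands on the right-hand side into $F_{p+q-1}\oplus F_{p+q-3}\oplus\dots\oplus F_{p-q+1}$, which is exactly the claim.

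There is no real obstacle here once Theorem~\ref{Th9} is in hand; the only thing to double-check is the triviality of $\det F_2$, which allows one to drop the $\det E$ factor in the general Clebsch-Gordan identity, and the bookkeeping of the indices (the outermost summand $S^{p-q}(F_2)$ corresponds to $F_{p-q+1}$, and the top summand $S^{p+q-2}(F_2)$ corresponds to $F_{p+q-1}$).
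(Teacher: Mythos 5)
Your proposal is correct and follows essentially the same route as the paper: identify $F_n$ with $S^{n-1}(F_2)$ via Theorem~\ref{Th9}, apply the iterated Clebsch--Gordan formula~(\ref{G-Cl}), and translate back. Your explicit check that $\det F_2$ is trivial (from $\det A_2=1$) is a detail the paper leaves implicit but which is indeed required to use the simplified form of~(\ref{G-Cl}).
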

\begin{proof}
Using Theorem~\ref{Th9} and (\ref{G-Cl}) we obtain
\begin{align*}
F_p\ten F_q\simeq &S^{p-1}(F_2)\ten S^{q-1}(F_2)\simeq
S^{p+q-2}(F_2)\oplus S^{p+q-4}(F_2)\oplus\dots \oplus
S^{p-q}(F_2)\simeq\\
&F_{p+q-1}\oplus  F_{p+q-3}\oplus \dots\oplus
  F_{p-q+1}.
\end{align*}
This completes the proof.
\end{proof}
\begin{rem}
The possibility of proving the last theorem using Theorem~\ref{Th9} is exactly what
Atiyah states in remark (1) after Theorem 9 (see~\cite[p. 439]{Atiyah}).
\end{rem}
We have already given (Corollary~\ref{Atiyahbundles}) a description of vector bundles of degree zero
with non-trivial sections. We give now a description of all vector bundles of degree zero.

Consider the function  $\varphi_0(z)=\exp(-\pi i
\tau -2\pi i z)=q^{-1/2}u^{-1}=\varphi(u)$, where $u=e^{2 \pi i
  z}$. It defines the factor of automorphy
\[
e_0(p\tau+q, z)=\exp(-\pi i p^2 \tau- 2\pi i z p)=q^{-\frac{p^2}{2}}u^{-p}
\]
 corresponding to the theta-characteristic $\xi=0$.

\begin{tr}
$\deg E(\varphi_0)=1$, where as above $\varphi_0(z)=\exp(-\pi i
\tau -2\pi i z)=q^{-1/2}u^{-1}=\varphi(u)$.
\end{tr}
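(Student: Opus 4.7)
The plan is to reduce the claim to Theorem~\ref{degreetheta}, which already shows that the line bundle $E(e_\xi)$ associated to any theta-characteristic $\xi$ has degree $1$. It will therefore suffice to identify $E(\varphi_0)$ with $E(e_0)$, the line bundle associated to the zero theta-characteristic.

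First I would observe that the factor of automorphy
\[
e_0(p\tau+q, z) = \exp(-\pi i p^2 \tau - 2\pi i p z)
\]
depends only on the $\tau$-coefficient $p$ and not on $q \in \bb Z$, so $e_0$ satisfies condition~(\ref{onlyM}). By Theorem~\ref{polauto} it therefore corresponds to a $\bb C^*$-factor of automorphy $A_{e_0}:\Gamma\times\bb C^*\ra \bb C^*$ with
\[
A_{e_0}(p, u) := A_{e_0}(p\tau, u) = q^{-p^2/2} u^{-p}.
\]
Theorem~\ref{tr:A(1)} then guarantees that $A_{e_0}$ is completely determined by its value at $p=1$, namely $A_{e_0}(1, u) = q^{-1/2} u^{-1}$, which is exactly $\varphi_0(u)$.

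This yields $E(\varphi_0) \simeq E(e_0)$ as line bundles on $X$, and Theorem~\ref{degreetheta} immediately gives $\deg E(\varphi_0) = 1$. The entire argument is bookkeeping through the correspondence established in Section~\ref{subsection:only TAU factors of automorphy}, so I do not anticipate any substantive obstacle.
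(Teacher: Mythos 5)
Your proposal is correct and follows essentially the same route as the paper, which simply notes that $\varphi_0(u)=q^{-1/2}u^{-1}$ generates (via Theorem~\ref{tr:A(1)}) the factor of automorphy $e_0(p\tau+q,z)=q^{-p^2/2}u^{-p}$ attached to the theta-characteristic $\xi=0$ and then invokes Theorem~\ref{degreetheta}. You merely make the bookkeeping through Theorem~\ref{polauto} and Theorem~\ref{tr:A(1)} explicit, which the paper leaves implicit in the paragraph preceding the statement.
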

\begin{proof}
Follows from Theorem~\ref{degreetheta} for $\xi=0$.
\end{proof}

\begin{tr}
Let $L'\in \cal E(1,d)$. Then there exists $x\in X$ such that
\(
L'\simeq t_x^*E(\varphi_0)\ten E(\varphi_0)^{d-1}.
\)
\end{tr}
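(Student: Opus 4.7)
The plan is to reduce the statement to the degree-one case by tensoring with a power of $E(\varphi_0)$, and then to show that every degree-one line bundle on $X$ is of the form $t_x^*E(\varphi_0)$ for some $x\in X$. Note first that both sides have degree $d$: by the preceding theorem $\deg E(\varphi_0)=1$, and since $t_x$ is a biholomorphism of $X$, the pullback $t_x^*E(\varphi_0)$ again has degree one, so $\deg\bigl(t_x^*E(\varphi_0)\otimes E(\varphi_0)^{d-1}\bigr)=1+(d-1)=d=\deg L'$.

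Set $L\defeq L'\otimes E(\varphi_0)^{-(d-1)}$, a line bundle of degree one on $X$. It then suffices to exhibit a point $x\in X$ with $L\simeq t_x^*E(\varphi_0)$, for then tensoring with $E(\varphi_0)^{d-1}$ yields the desired isomorphism $L'\simeq t_x^*E(\varphi_0)\otimes E(\varphi_0)^{d-1}$. From the proof of Theorem~\ref{degreetheta} applied to the theta-characteristic $\xi=0$, the section provided by $\theta_0$ has a unique simple zero at $p=[\frac12+\frac{\tau}{2}]\in X$, whence $E(\varphi_0)\simeq [p]$. Since $t_x:y\mapsto y+x$ is a biholomorphism with $t_x^{-1}(p)=\{p-x\}$, we have $t_x^*[p]\simeq [p-x]$, and as $x$ ranges over $X$ the point $p-x$ exhausts $X$. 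Therefore, it is enough to produce a point $q\in X$ with $L\simeq [q]$; setting $x\defeq p-q$ will then give $L\simeq t_x^*E(\varphi_0)$.

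To produce such a $q$, I would invoke Riemann-Roch on the elliptic curve $X$: since $\deg L=1$ and $g(X)=1$, we obtain $h^0(L)\ge \deg L+1-g=1$, so $L$ admits a nonzero global section. Its zero locus is an effective divisor of degree one on $X$, hence a single point $q\in X$, and consequently $L\simeq[q]$. Taking $x=p-q$ then concludes the argument.

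The main obstacle is the step identifying every degree-one line bundle on $X$ as $[q]$ for some $q$, i.e.\ the fact that the Abel--Jacobi map $X\to \Pic^1(X)$, $q\mapsto [q]$, is surjective; this is where the genus-one hypothesis enters via Riemann-Roch. Everything else is bookkeeping with tensor powers, pullbacks, and the previously established identification $E(\varphi_0)\simeq [p]$ from Theorem~\ref{degreetheta}.
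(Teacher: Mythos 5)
Your proof is correct and follows essentially the same route as the paper: reduce by tensoring with a power of $E(\varphi_0)$, then use the identification $E(\varphi_0)\simeq[p]$ together with the translation formula $t_x^*[p]\simeq[p-x]$. The only difference is that the paper normalizes to degree zero and simply cites that every $\tilde L\in\cal E(1,0)$ is of the form $t_x^*E(\varphi_0)\otimes E(\varphi_0)^{-1}$, whereas you normalize to degree one and justify the corresponding surjectivity of $q\mapsto[q]$ explicitly via Riemann--Roch, which makes explicit a step the paper leaves implicit.
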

\begin{proof}
Since $E(\varphi_0)^d$ has degree $d$, we obtain that there exists $\tilde L\in \cal E(1,0)$ such that
$L'\simeq E(\varphi_0)^d\ten \tilde L$.
We also know that $\tilde
L\simeq t_x^* E(\varphi_0)\ten E(\varphi_0)^{-1}$ (cf. proof of Theorem~\ref{degreetheta} and Theorem~\ref{tr:shift}) for some $x\in
X$. Combining these one obtains
\[
L'\simeq E(\varphi_0)^d\ten t_x^*
E(\varphi_0)\ten E(\varphi_0)^{-1}\simeq  t_x^*E(\varphi_0)\ten
E(\varphi_0)^{d-1}.
\]
This proves the required statement.
\end{proof}

\begin{tr}\label{trivbundleisconst}
The map
\begin{align*}
\bb C^*/<q>\ra \Pic^0(X),\quad \bar a\mto E(a).
\end{align*}
is well-defined and is an isomorphism of groups.
\end{tr}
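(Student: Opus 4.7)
The plan is to verify multiplicativity, well-definedness on the quotient, landing in $\Pic^0(X)$, injectivity, and surjectivity in turn. The two main tools are Theorem~\ref{tr:A(1)} (the equivalence criterion $A\sim A'\iff A(u)B(u)=B(qu)A'(u)$) and Theorem~\ref{tensorPol} (tensor products of factors correspond to tensor products of line bundles).

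Multiplicativity is immediate from Theorem~\ref{tensorPol} since for constant factors $A(u)\equiv a$, $A'(u)\equiv a'$ the tensor product is just the constant $aa'$. Well-definedness on $\bb C^*/\langle q\rangle$ reduces, via Theorem~\ref{tr:A(1)}, to producing a nowhere-vanishing holomorphic $B$ satisfying $B(qu)=q^{-n}B(u)$ whenever $a'=q^na$; the monomial $B(u)=u^{-n}$ does the job. The image lies in $\Pic^0(X)$ by a soft argument: $\bb C^*$ is divisible and $\Pic(X)/\Pic^0(X)\cong\bb Z$ is torsion-free, so the composite $\bb C^*\to\bb Z$ is zero.

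Injectivity uses a Laurent-coefficient analysis. An isomorphism $E(a)\simeq E(a')$ supplies, via Theorem~\ref{tr:A(1)}, a nowhere-vanishing holomorphic $B:\bb C^*\to\bb C^*$ with $B(qu)=(a/a')B(u)$. Expanding $B(u)=\sum_{k\in\bb Z}b_ku^k$ and matching coefficients yields $b_k(q^k-a/a')=0$ for every $k$. Since $|q|<1$ the values $q^k$ are pairwise distinct, and $B\not\equiv 0$ forces some $b_{k_0}\neq 0$, whence $a/a'=q^{k_0}$ and $\bar a=\bar{a'}$ in $\bb C^*/\langle q\rangle$.

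Surjectivity carries the main content. Given $L'\in\Pic^0(X)$, present $L'=E(A)$ for a holomorphic $A:\bb C^*\to\bb C^*$. Using Theorem~\ref{tensorPol}, $\deg E(\varphi_0)=1$ (Theorem~\ref{degreetheta}), and the divisibility of the additive group of holomorphic functions $\bb C^*\to\bb C$, one checks that $\deg E(A)$ equals the negative of the winding number of $A$ around $0$, so $\deg L'=0$ forces $A=\exp f$ for some holomorphic $f:\bb C^*\to\bb C$. Laurent-expanding $f(u)=\sum_{k\in\bb Z}f_ku^k$, set $a=\exp(f_0)$ and
\[
g(u)=\sum_{k\neq 0}\frac{f_k}{q^k-1}\,u^k,\qquad B(u)=\exp g(u);
\]
a direct computation then yields $A(u)B(u)=aB(qu)$, so $A\sim a$ by Theorem~\ref{tr:A(1)}. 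The main obstacle is the convergence of $g$ on all of $\bb C^*$: for $k\to+\infty$ one has $|q^k-1|\geq 1-|q|>0$, while for $k\to-\infty$ the denominators grow like $|q|^k$, so both the regular and principal parts of $g$ inherit their domain of convergence from $f$.
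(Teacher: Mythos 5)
Your proof is correct, and its key step --- surjectivity --- takes a genuinely different route from the paper's. The paper gets surjectivity geometrically: it invokes the earlier results that $E(e_\xi)\simeq[p]$ (via Griffiths--Harris) and that every $L\in\cal E(1,0)$ is a translate difference $t_x^*E(\varphi_0)\ten E(\varphi_0)^{-1}$, and then computes explicitly that this bundle is given by the constant $\exp(-2\pi i\xi)$; the same computation read backwards shows that constants land in $\Pic^0(X)$. You instead normalize an arbitrary degree-zero factor of automorphy directly: writing $A=u^n\exp f$, identifying $-n$ with the degree, and solving the twisted coboundary equation $g(qu)-g(u)=f(u)-f_0$ coefficientwise, with the bounds $|q^k-1|\ge 1-|q|$ for $k>0$ and the growth of $|q^k-1|$ for $k<0$ guaranteeing convergence of $g$ on all of $\bb C^*$ (and $B=\exp g$ is automatically nowhere vanishing, as required). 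This is self-contained and purely analytic, avoiding any appeal to the structure of the Jacobian, at the cost of the small preliminary that $\deg E(A)$ equals minus the winding number of $A$ --- which your two divisibility arguments (for $\bb C^*$ mapping to $\bb Z$, and for the additive group of holomorphic functions handling the $\exp f$ part) settle cleanly, together with $\deg E(\varphi_0)=1$. Your injectivity step is the same Laurent-coefficient kernel computation as the paper's, and your argument in fact yields the slightly stronger statement that every nowhere-vanishing holomorphic $A:\bb C^*\ra\bb C^*$ of winding number zero is equivalent, as a factor of automorphy, to the constant $\exp(f_0)$.
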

\begin{proof}
Let $\varphi_0(z)=\exp(-\pi i \tau - 2\pi i z)$ as above. For $x\in X$ consider
$t_x^*E(\varphi_0)$, where the map
\begin{align*}
t_x:X\ra X, \quad y\mto y+x
\end{align*}
is the translation by $x$.
Let $\xi\in \bb C$ be a representative of $x$.  Clearly, $t_x^*E(\varphi_0)$ is defined by
\[{\varphi_0}_\xi(z)=t_{\xi}{\varphi_0}(z)={\varphi_0}(z+{\xi})=\exp(-\pi i \tau -2\pi i z-2\pi i
{\xi})={\varphi_0}(z)\exp(-2\pi i {\xi}).
\](Note that if $\eta$ is another representative of $x$, then
${\varphi_0}_\xi$ and ${\varphi_0}_\eta$ are equivalent.)
Therefore, the bundle $t_x^*E({\varphi_0})\ten
E({\varphi_0})^{-1}$ is defined by
\[
({\varphi_0} _{\xi}
{\varphi_0}^{-1})(z)={\varphi_0}(z)\exp(- 2\pi i{\xi}){\varphi_0}^{-1}(z)=\exp(- 2\pi
i{\xi}).
\]
 Since for any $L\in \cal E(1,0)$ there exists $x\in X$ such that
$L\simeq t_x^* E({\varphi_0})\ten E({\varphi_0})^{-1}$, we obtain $L\simeq
E(a)$ for $a=\exp(- 2\pi i{\xi})\in \bb C^*$, where ${\xi}\in \bb C$ is a
representative of $x$. We proved that any line bundle of degree zero is defined by a constant
function $a\in \bb C^*$.

Vice versa, let $L=E(a)$ for $a\in \bb C^*$. Clearly, there exists
${\xi}\in \bb C$ such that $a=\exp(-2 \pi i {\xi})$. Therefore,
\[
L\simeq E(a)\simeq
L({\varphi_0}_{\xi}{\varphi_0}^{-1})\simeq t_x^* E({\varphi_0})\ten E({\varphi_0})^{-1},
\]
where $x$ is the class of ${\xi}$ in $X$,
which implies that $E(a)$ has degree zero. So we obtained that the line
bundles of degree zero are exactly the line bundles defined by
constant functions.

We have the map
\begin{align*}
\phi: \bb C^*\ra \Pic^0(X), \quad a\mto E(a),
\end{align*}
which is surjective. By Theorem~\ref{tensorPol} it is moreover a
homomorphism of groups. We are looking now for the kernel of this map.

Suppose $E(a)$ is a trivial bundle. Then there exists a holomorphic
function $f:\bb C^*\ra\bb C^*$ such that $f(qu)=af(u)$. Let $f=\sum
f_\nu a^\nu$ be the Laurent series expansion of $f$. Then from
$f(qu)=af(u)$ one obtains
\[
af_\nu=f_\nu q^\nu\text{ for all $\nu \in
  \bb Z$}.
\]
Therefore, $f_\nu(a-q^\nu)=0$ for all $\nu \in \bb Z$.

Since $f\not\equiv 0$, we obtain that there exists $\nu \in \bb Z$ with
$f_\nu\neq 0$. Hence $a=q^\nu$ for some $\nu \in \bb Z$.

Vice versa, if $a=q^\nu$, for $f(u)=u^\nu$ we get
\[
f(qu)=q^\nu
u^\nu=af(u).
\] This means that $E(a)$ is the trivial bundle, which  proves
$\Ker\phi=<q>$. We obtain the required  isomorphism
\begin{align*}
\bb C^*/<q>\ra \Pic^0(X),\quad \bar a\mto E(a).
\end{align*}
This completes the proof.
\end{proof}
\begin{tr}\label{degzerobundle}
For any $F\in \cal E(r,0)$ there exists a unique $\bar a\in \bb C^*/<q>$
such that $F\simeq E(A_r(a))$, where
\[
A_r(a)=
\begin{pmatrix}
a&1&\\
&\ddots&\ddots\\
&&a&1\\
&&&a
\end{pmatrix}
.
\]
\end{tr}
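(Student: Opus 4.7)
My plan is to reduce the statement to three inputs: Atiyah's classification of $\cal E(r,0)$, the parametrization $\bb C^*/\langle q\rangle\simeq \Pic^0(X)$ from Theorem~\ref{trivbundleisconst}, and the compatibility of tensor products with factors of automorphy from Theorem~\ref{tensorPol}. Recall from Atiyah (the theorem immediately following Corollary~\ref{Atiyahbundles}) that every $F\in\cal E(r,0)$ is isomorphic to $L\otimes F_r$ for a unique $L\in\Pic^0(X)$, where $F_r$ is the indecomposable bundle of rank $r$ and degree $0$ with non-trivial sections that we have already identified with $E(A_r)$.

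The first step is to apply this decomposition: write $F\simeq L\otimes F_r \simeq L\otimes E(A_r)$. By Theorem~\ref{trivbundleisconst} there exists a unique $\bar a\in \bb C^*/\langle q\rangle$ with $L\simeq E(a)$. By Theorem~\ref{tensorPol} we obtain
\[
F\simeq E(a)\otimes E(A_r)\simeq E(a\cdot A_r),
\]
where $a\cdot A_r$ is the constant $r\times r$ matrix with $a$'s on the diagonal and $a$'s on the superdiagonal.

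The second step is to show $E(a\cdot A_r)\simeq E(A_r(a))$. Both $a\cdot A_r$ and $A_r(a)$ are constant matrices with characteristic polynomial $(t-a)^r$ and a one-dimensional eigenspace at the eigenvalue $a$ (since in each case the superdiagonal entries are nonzero); hence both have a single Jordan block of size $r$ and are conjugate via some constant invertible matrix $S$. Taking the holomorphic function $h:\bb C^*\to\GL_r(\bb C)$, $u\mapsto S$, to be constant yields $h(qu)(a\cdot A_r)=A_r(a)\,h(u)$, which is exactly the equivalence condition of Theorem~\ref{tr:A(1)}. This is a small variant of Lemma~\ref{samegord}, the only change being that the diagonal entries are $a$ instead of $1$; the argument is identical.

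Finally, uniqueness of $\bar a$ follows by reversing the chain: if $E(A_r(a))\simeq E(A_r(a'))$, then (running the two steps above backwards) we get $E(a)\otimes F_r\simeq E(a')\otimes F_r$ in $\cal E(r,0)$, so by the uniqueness clause in Atiyah's classification $E(a)\simeq E(a')$ in $\Pic^0(X)$, and then by the injectivity part of Theorem~\ref{trivbundleisconst} we have $\bar a = \bar a'$ in $\bb C^*/\langle q\rangle$. The only real work is the conjugation step, and it is genuinely easy because the matrices are constant and share a Jordan normal form; the potential pitfall is simply to remember that it is the constant conjugator — not a nontrivial holomorphic $h$ — that provides the equivalence of factors of automorphy.
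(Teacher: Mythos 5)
Your proof is correct and follows essentially the same route as the paper: decompose $F\simeq F_r\otimes L$ via Atiyah's Theorem~5, identify $L\simeq E(a)$ for a unique $\bar a\in\bb C^*/\langle q\rangle$ using Theorem~\ref{trivbundleisconst}, pass to the constant matrix $a\cdot A_r$ via Theorem~\ref{tensorPol}, and conclude by the Jordan normal form argument for constant factors of automorphy. The only difference is that you spell out the uniqueness step explicitly, which the paper leaves implicit in the uniqueness clauses of its two ingredients.
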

\begin{proof}
By~\cite[Theorem~5]{Atiyah} $F\simeq F_r\ten L$
for a unique $L\in \cal E(1,0)$. Since $F_r\simeq E(A_r)$ and $L\simeq
E(a)$ for  a unique $\bar a\in \bb C^*/<q>$ we get $F\simeq E(A_r\ten
a)$. So $F$ is defined by the matrix
\[
\begin{pmatrix}
a&a&\\
&\ddots&\ddots\\
&&a&a\\
&&&a
\end{pmatrix},
\] where empty entries stay for zeros. It is easy to see that the
Jordan normal form of this matrix is
\[
\begin{pmatrix}
a&1&\\
&\ddots&\ddots\\
&&a&1\\
&&&a
\end{pmatrix}.
\]
This  proves the statement of the theorem.
\end{proof}

\subsection{Vector bundles of arbitrary degree}
Denote by $E_\tau=\bb C/\Gamma_\tau$, where $\Gamma_\tau=\bb Z\tau+\bb
Z$. Consider the $r$-covering
\[
\pi_r:E_{r\tau}\ra E_\tau, \quad [x]\mto [x].
\]
\begin{tr}\label{pullback}
Let $F$ be a vector bundle of rank $n$ on $E_\tau$ defined by
$A(u)=A(1,u)=A(\tau, u)$. Then $\pi_r^*(F)$ is defined by
\[
\tilde A(r\tau, u)=\tilde
A(u)=\tilde A(1,u)\defeq A(r\tau, u)=A(q^{r-1}u)\dots A(qu)A(u).
\]
\end{tr}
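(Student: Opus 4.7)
The plan is to work in the language of factors of automorphy using the universal cover $\bb C$ of both $E_\tau$ and $E_{r\tau}$. The key observation is that $\Gamma_{r\tau} = \bb Z(r\tau) + \bb Z$ is a subgroup of $\Gamma_\tau = \bb Z\tau + \bb Z$, and the covering $\pi_r: E_{r\tau} \to E_\tau$ is induced by the identity $\bb C \to \bb C$, so that the universal cover $\bb C \to E_\tau$ factors as $\bb C \to E_{r\tau} \xrightarrow{\pi_r} E_\tau$.

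First I would establish the following general principle: if $F$ on $E_\tau$ corresponds via Theorem~\ref{automain} to a factor of automorphy $f:\Gamma_\tau\times\bb C\to\GL_n(\bb C)$, then $\pi_r^*F$ on $E_{r\tau}$ corresponds to the restriction $\tilde f\defeq f|_{\Gamma_{r\tau}\times\bb C}$. This can be checked directly from the construction $E(f) = \bb C\times\bb C^n/\Gamma_\tau$ of Theorem~\ref{AutoB}: the quotient $\bb C\times\bb C^n/\Gamma_{r\tau}$ with respect to the action via $\tilde f$ is a vector bundle on $E_{r\tau}$ that maps naturally to $E(f)$ over $\pi_r$, and the resulting map to the fiber product $E_{r\tau}\times_{E_\tau}E(f)=\pi_r^*E(f)$ is a fibrewise isomorphism. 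Alternatively, one compares cocycles obtained from compatible trivializing covers, observing that the transition matrices of $\pi_r^*F$ are those of $F$ pulled back, and the deck transformation elements $\lambda_{UV}$ relevant for $E_{r\tau}$ are those which happen to lie in $\Gamma_{r\tau}$.

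Next I translate to the $\bb C^*$-picture of Section~\ref{subsection:only TAU factors of automorphy}. Under the isomorphism $E_\tau\simeq\bb C^*/\langle q\rangle$, the factor $f$ satisfies (\ref{onlyM}) and corresponds to $A$ via $f(m\tau+n,x)=A(m,e^{2\pi i x})$. Under $E_{r\tau}\simeq\bb C^*/\langle q^r\rangle$, the restricted factor $\tilde f$ satisfies the analogous condition with respect to $\Gamma_{r\tau}$ and corresponds to a $\bb C^*$-factor $\tilde A$ via $\tilde f\bigl(m(r\tau)+n,x\bigr)=\tilde A(m,e^{2\pi i x})$. Since $\tilde f=f|_{\Gamma_{r\tau}\times\bb C}$, one immediately obtains
\[
\tilde A(m,u)=\tilde f\bigl(m(r\tau),x\bigr)=f\bigl((mr)\tau,x\bigr)=A(mr,u).
\]
Taking $m=1$ and applying the explicit formula from Theorem~\ref{tr:A(1)} gives
\[
\tilde A(u)=\tilde A(1,u)=A(r,u)=A(q^{r-1}u)\cdots A(qu)A(u),
\]
which is the desired expression.

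The main technical obstacle is the first step, verifying that pullback corresponds to restriction of the factor of automorphy along the inclusion $\Gamma_{r\tau}\hookrightarrow\Gamma_\tau$. Once this functoriality is in place, the remainder is a bookkeeping exercise with the identifications of Section~\ref{subsection:only TAU factors of automorphy} together with the explicit formula from Theorem~\ref{tr:A(1)}.
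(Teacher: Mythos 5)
Your proposal is correct and follows essentially the same route as the paper: the paper also identifies $\pi_r^*E(A)$ with the quotient of $\bb C\times\bb C^n$ by the sublattice $\Gamma_{r\tau}$ acting through the restricted factor of automorphy, via the explicit map $[z,v]_{r\tau}\mapsto([z]_{r\tau},[z,v]_\tau)$ to the fiber product, checked to be a fibrewise biholomorphism in local charts. The concluding product formula $A(r\tau,u)=A(q^{r-1}u)\cdots A(qu)A(u)$ is then, as you say, just Theorem~\ref{tr:A(1)} applied to the generator $r\tau$ of the sublattice.
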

\begin{proof}
Consider the following commutative diagram.
\[
\xymatrix
{
&\bb C\ar[ld]_{p_{r\tau}}\ar[rd]^{p_\tau}&\\
E_{r\tau}\ar[rr]^{\pi_r}&&E_\tau
}
\]

Consider the map
\begin{align*}
\bb C\times \bb C^n/\tilde A=E(\tilde A)&\ra
\pi_{r}^*(E(A))=E_{r\tau}\times_{E_\tau}E(A)=\{([z]_{r\tau},\
   [z,v]_\tau)\in E_{r\tau}\times E( A)\}\\
[z,v]_{r\tau}&\mto ([z]_{r\tau},\ [z,v]_\tau).
\end{align*}
It is clearly bijective. It remains to prove that it is
biholomorphic. From the construction of $E(A)$ and
$E(\tilde A)$ it follows that the diagram
\[
\xymatrix
{
\ar[rd] E(\tilde A)&&\\
&{\pi_r^*(E(A))}\ar[r]\ar[d]&E(A)\ar[d]\\
&E_{r\tau}\ar[r]&E_{\tau}
}
\]
locally looks as follows:
\[
\xymatrix
{
U\times\bb C^n\ar[rd]&&\\
&\Delta(U\times U)\times \bb C^n\ar[d]\ar[r]&U\times \bb C^n,\ar[d]\\
&U\ar@{=}[r]&U
}
\xymatrix
{
(z,v)\ar@{|->}[rd]&&\\
&((z,z),v)\ar@{|->}[r]\ar@{|->}[d]&(z,v).\ar@{|->}[d]\\
&z\ar@{|->}[r]&z
}
\]
This proves the required statement.
\end{proof}

\begin{tr}\label{pushforward}
Let $F$ be a vector bundle of rank $n$ on $E_{r\tau}$ defined by
$\tilde A(u)=\tilde A(r\tau, u)$. Then $\pi_{r*}(F)$ is defined by
\(
A(u)=
\begin{pmatrix}
0& I_{(r-1)n}\\
\tilde A(u)&0
\end{pmatrix}
\).

\end{tr}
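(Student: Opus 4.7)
The plan is to exploit the section--theta function correspondence from Theorem~\ref{sections} in order to reduce the claim to a direct block-matrix verification. Concretely, I will construct an explicit natural bijection between sections of $\pi_{r*}F$ and sections of $E(A)$, from which the isomorphism of bundles follows.

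Let $\tilde f: \Gamma_{r\tau} \times \bb C \to \GL_n(\bb C)$ denote the factor of automorphy (satisfying the analogue of~(\ref{onlyM}) for $\Gamma_{r\tau}$) corresponding to $\tilde A$, so that $\tilde f(r\tau, x) = \tilde A(e^{2\pi i x})$ and $\tilde f(n, x) = I_n$ for all $n \in \bb Z$. Since $\pi_r \circ p_{r\tau} = p_\tau$, sections of $\pi_{r*}F$ over an open $V \subset E_\tau$ coincide by definition with sections of $F$ over $\pi_r^{-1}(V)$, which by (a local version of) Theorem~\ref{sections} correspond to holomorphic maps $s: p_\tau^{-1}(V) \to \bb C^n$ satisfying $s(\gamma + x) = \tilde f(\gamma, x) s(x)$ for all $\gamma \in \Gamma_{r\tau}$. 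The key construction will be the \emph{stacking map}
\[
s \longmapsto \tilde s, \qquad \tilde s(x) \defeq \bigl( s(x),\ s(\tau + x),\ \dots,\ s((r-1)\tau + x) \bigr)^{T} \in \bb C^{rn}.
\]

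Then I will verify that $\tilde s$ is an $f$-theta function for the factor of automorphy $f$ on $\Gamma_\tau$ corresponding to $A$. The identity $\tilde s(1 + x) = \tilde s(x)$ follows componentwise from $s(1 + y) = s(y)$. The nontrivial relation $\tilde s(\tau + x) = A(u)\, \tilde s(x)$ (with $u = e^{2\pi i x}$) is where the specific block shape appears: the upper identity block $I_{(r-1)n}$ shifts the components $s(i\tau + x) \mapsto s((i+1)\tau + x)$ for $0 \le i \le r - 2$, while the lower-left block $\tilde A(u)$ absorbs the wrap-around $s(r\tau + x) = \tilde A(u) s(x)$, which is exactly the $\tilde f$-theta condition at $\gamma = r\tau$. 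The inverse stacking map extracts the first $\bb C^n$-component; consistency of the remaining components is forced by iterating $\tilde s(\tau + x) = A(u)\, \tilde s(x)$.

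This bijection is manifestly natural in $V$, and so yields an isomorphism of sheaves and thus of vector bundles. The main obstacle I foresee is formulating the sheaf-level analogue of Theorem~\ref{sections} carefully, since that theorem is stated for global sections, but its proof is local in nature and goes through verbatim over open subsets obtained as preimages under $p_\tau$; apart from this, everything reduces to a direct block-matrix calculation.
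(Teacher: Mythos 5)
Your proposal is correct, but it proves the theorem by a genuinely different mechanism than the paper does. The paper works directly with cocycles: it decomposes $\pi_r^{-1}(U_b)$ into the $r$ sheets $W_b,\dots,W_{b+(r-1)\tau}$, identifies $\pi_{r*}\cal E(\tilde A)(U_b)$ with the direct sum of the sections over these sheets, and then tracks how the sheets are permuted when one passes from the chart at $b$ to the chart at $a$ with $\varphi_a(x)=\varphi_b(x)+\tau$; the permutation contributes the $I_{(r-1)n}$ blocks and the single wrap-around sheet contributes the $\tilde A$ block. Your argument instead stays entirely on the universal cover: you use that $p_{r\tau}^{-1}(\pi_r^{-1}(V))=p_\tau^{-1}(V)$, describe sections of $F$ over $\pi_r^{-1}(V)$ as $\tilde f$-theta functions, and exhibit the stacking map $s\mapsto(s(x),s(\tau+x),\dots,s((r-1)\tau+x))^{T}$ as an $\cal O$-linear bijection onto $f$-theta functions for the claimed block matrix $A$ — your block bookkeeping (identity blocks shift, $\tilde A$ absorbs $s(r\tau+x)=\tilde A(u)s(x)$) is exactly right, as is the inverse via projection to the first block. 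What your approach buys is that it avoids the somewhat delicate sheet-indexing of the paper's proof and makes the appearance of the companion-type matrix transparent; what it costs is the need to upgrade Theorem~\ref{sections} from global sections to a statement about the sheaf of sections over arbitrary open $V$ (using that $p_\tau^{-1}(V)$ is $\Gamma_\tau$-stable), and to note explicitly that the stacking bijection is $\cal O_{E_\tau}$-linear and natural in $V$, so that it really is an isomorphism of locally free sheaves and hence of vector bundles. Both of these points are routine, and you flag the first one yourself, so I see no genuine gap.
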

\begin{proof}
Consider the following commutative diagram.
\[
\xymatrix
{
&\bb C\ar[ld]_{p_{r\tau}}\ar[rd]^{p_\tau}&\\
E_{r\tau}\ar[rr]^{\pi_r}&&E_\tau
}
\]

Let $z\in \bb C$. Consider $y=p_{r\tau}(z)\in E_{r\tau}$ and
$x=p_\tau(z)=\pi_rp_{r\tau}(z)\in E_\tau$.

Choose a point $b\in \bb C$ such that $z\in V_b$, where $V_b$ is the
standard parallelogram at point $b$. Clearly $x\in U_b=p_r(V_b)$ and
we have the isomorphism ${\varphi}_b: U_b\ra V_b$ with
${\varphi}_b(x)=z$.

Consider $\pi_r^{-1}(U_b)=W_b\bigsqcup\dots\bigsqcup W_{b+(r-1)\tau}$, where
$y\in W_b$ and $\pi_r|_{W_{b+i\tau}}:W_{b+i\tau}\ra U_b$ is an
isomorphism for each $0\leqslant i<r$.

We have
\begin{align*}
\pi_{r*}(\cal E(\tilde A))(U_b)=&\cal E(\tilde
A)(\pi^{-1}_r(U_b))=\cal E(\tilde A)( W_b\bigsqcup\dots\bigsqcup
W_{b+(r-1)\tau})=\\&\cal E(\tilde A)(W_b)\oplus\dots\oplus\cal
E(\tilde A)(W_{b+(r-1)\tau}),
\end{align*}
where $\cal E(\tilde A)$ is the sheaf of sections of $E(\tilde A)$.

Choose $a\in \bb C$ such that $z\not\in V_a$, $z\in V_{a+\tau}$. We
have ${\varphi}_a(x)=z+\tau$. As
above, $\pi_r^{-1}(U_a)=W_a\bigsqcup\dots\bigsqcup W_{a+(r-1)\tau}$
and
\begin{align*}
\pi_{r*}(\cal E(\tilde A))(U_a)=&\cal E(\tilde
A)(\pi^{-1}_r(U_a))=\cal E(\tilde A)( W_a\bigsqcup\dots\bigsqcup
W_{a+(r-1)\tau})=\\&\cal E(\tilde A)(W_a)\oplus\dots\oplus\cal
E(\tilde A)(W_{a+(r-1)\tau}).
\end{align*}

Since $g_{ab}(x)=A(\varphi_a(x)-{\varphi}_b(x), \varphi_b(x))$, we obtain
\[
g_{ab}(x)=A(\varphi_a(x)-\varphi_b(x), \varphi_b(x))=A(z+\tau -
z,z)=A(\tau, z).
\] Therefore, to obtain $A(\tau, z)$ it is enough to
compute $g_{ab}(x)$.

Note that $\pi_{r*}(\cal E(\tilde A))_x=\cal E(\tilde
A)_y\oplus\dots\oplus \cal E(\tilde A)_{y+(r-1)\tau}$. Note also that $g_{ab}$ is a map from
\(
\pi_{r*}(\cal E(\tilde A))(U_b)=\cal E(\tilde A)(W_b)\oplus\dots\oplus\cal
E(\tilde A)(W_{b+(r-1)\tau})
\)
 to
\(
\pi_{r*}(\cal E(\tilde A))(U_a)=\cal E(\tilde A)(W_a)\oplus\dots\oplus\cal
E(\tilde A)(W_{a+(r-1)\tau})
\).

One easily sees that $y\in W_b$, $y\in W_{a+(r-1)\tau}$ and
$y+i\tau\in W_{b+i\tau}$, $y+i\tau\in W_{a+(i-1)\tau}$ for
$0<i<r$. Therefore,
\[
g_{ab}(x)=
\begin{pmatrix}
0&\tilde g_{a\ b+\tau}(y+\tau)&\\
\vdots&&\ddots\\
0&&&\tilde g_{a+(r-2)\tau\ b+(r-a)\tau}(y+(r-1)\tau)\\
\tilde g_{a+(r-1)\tau\ b}(y)&0&\hdotsfor{1} &0
\end{pmatrix}.
\]

It remains to compute the entries of this matrix.
Since
\begin{align*}
&\tilde g_{a+(r-1)\tau\ b}(y)=\tilde
A(\tilde\varphi_{a+(r-1)\tau}(y)-\tilde\varphi_b(y),\tilde\varphi_b(y))=\tilde
A(z+r\tau -z,z)=\tilde A(r\tau,z)\text{ and}\\
&\tilde g_{a+(i-1)\tau\ b+i\tau}(y+i\tau)=\tilde
A(\tilde\varphi_{a+(i-1)\tau}(y+i\tau)-\tilde\varphi_{b+i\tau}(y+i\tau),\tilde\varphi_{b+i\tau}(y+i\tau))=\\&\tilde
A(z+i\tau-(z+i\tau)=\tilde A(0,z+i\tau)=I_n,
\end{align*}
one obtains
\[
g_{ab}(x)=
\begin{pmatrix}
0&I_n&\\
\vdots&&\ddots\\
0&&& I_n\\
\tilde A(z)&0&\hdotsfor{1} &0
\end{pmatrix}.
\] Therefore,
\(
A(z)=
\begin{pmatrix}
0&I_n&\\
\vdots&&\ddots\\
0&&& I_n\\
\tilde A(z)&0&\hdotsfor{1} &0
\end{pmatrix}=
\begin{pmatrix}
0& I_{(r-1)n}\\
\tilde A(u)&0
\end{pmatrix}
\). This proves the required statement.
\end{proof}

\begin{lemma}\label{simplelemma}
Let $A_i\in \GL_n(\bb R)$, $i=1,\dots, n$. Then
\[
\prod_{i=1}^r
\begin{pmatrix}
0&I_{(r-1)n}\\
A_i&0
\end{pmatrix}
=\diag(A_r, \dots, A_1)
\]
\end{lemma}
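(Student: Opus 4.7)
\medskip
\noindent
\textbf{Proof plan.} The plan is to read off the action of each factor geometrically and iterate it. Write $M_i \defeq \begin{pmatrix} 0 & I_{(r-1)n} \\ A_i & 0 \end{pmatrix}$ and decompose $\bb C^{rn} = V_1\oplus\dots\oplus V_r$ with each $V_j\cong \bb C^n$. Reading off the blocks, $M_i$ acts as the ``twisted cyclic shift''
\[
M_i:(v_1,v_2,\dots,v_r)\lmto (v_2,v_3,\dots,v_r,\,A_iv_1),
\]
that is, a left shift that pushes the first block to the end after hitting it with $A_i$.

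\smallskip
\noindent
Next, I would apply $M_r$, then $M_{r-1}$, and so on, keeping track of which original block $v_j$ sits where. By a one-line induction on $k$, for $1\le k\le r$,
\[
M_{r-k+1}M_{r-k+2}\cdots M_r\,(v_1,\dots,v_r)=(v_{k+1},\dots,v_r,\,A_rv_1,\,A_{r-1}v_2,\,\dots,\,A_{r-k+1}v_k).
\]
The inductive step is immediate from the twisted-shift rule: applying $M_{r-k}$ to the right-hand side moves $v_{k+1}$ from the first slot to the last slot, multiplied by $A_{r-k}$, and shifts everything else one place to the left.

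\smallskip
\noindent
Finally, setting $k=r$ gives
\[
M_1M_2\cdots M_r\,(v_1,\dots,v_r)=(A_rv_1,\,A_{r-1}v_2,\,\dots,\,A_1v_r)=\diag(A_r,\dots,A_1)(v_1,\dots,v_r),
\]
which is the required identity. There is no real obstacle here: the only thing that could go wrong is a book-keeping error in indexing, so I would be careful to set conventions (product order $M_1M_2\cdots M_r$, block labels $V_1,\dots,V_r$) at the outset.
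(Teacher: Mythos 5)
Your proof is correct and is exactly the "straightforward calculation" that the paper leaves to the reader: interpreting each factor as a twisted cyclic shift and iterating is a clean way to organize the block bookkeeping, and your induction checks out with the convention $\prod_{i=1}^r M_i = M_1M_2\cdots M_r$, which is the one that yields $\diag(A_r,\dots,A_1)$ as stated.
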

\begin{proof}
Straightforward calculation.
\end{proof}
From Theorem~\ref{pullback} and Theorem~\ref{pushforward} one obtains
the following:
\begin{cor}
Let $E(A)$ be a vector bundle of rank $n$ on $E_{r\tau}$, where $A:\bb
C^*\ra \GL_n(\bb CV)$ is a holomorphic function. Then $\pi^*_r\pi
_{r*}E(A)$ is defined by
\[\diag(A(q^{r-1}u), \dots, A(qu), A(u)).
\] In
other words $\pi^*_r\pi
_{r*}E(A)$ is isomorphic to the direct sum
\[
\bigoplus_{i=0}^{r-1} E(A(q^{i}u)).
\]
\end{cor}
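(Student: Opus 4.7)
The plan is to simply concatenate the two previous theorems together with the elementary Lemma~\ref{simplelemma}, keeping careful track of indices.

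First I would apply Theorem~\ref{pushforward} to $E(A)$, viewed as a rank-$n$ bundle on $E_{r\tau}$ defined by the $r\tau$-factor of automorphy $\tilde A(u) \defeq A(u)$. This produces a rank-$rn$ bundle $\pi_{r*}E(A)$ on $E_\tau$ whose factor of automorphy (with respect to the step $\tau$) is
\[
B(u) \;\defeq\; \begin{pmatrix} 0 & I_{(r-1)n} \\ A(u) & 0 \end{pmatrix}.
\]
Next I would apply Theorem~\ref{pullback} to pull $\pi_{r*}E(A)$ back along $\pi_r$. This says that the $r\tau$-factor of automorphy describing $\pi_r^*\pi_{r*}E(A)$ is the telescoped product
\[
\widetilde{B}(u) \;=\; B(q^{r-1}u)\,B(q^{r-2}u)\cdots B(qu)\,B(u).
\]

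At this point the only remaining task is to evaluate this product, which is precisely what Lemma~\ref{simplelemma} accomplishes: setting $A_i \defeq A(q^{r-i}u)$ for $i=1,\dots,r$, the lemma gives
\[
\widetilde{B}(u) \;=\; \prod_{i=1}^r \begin{pmatrix} 0 & I_{(r-1)n} \\ A_i & 0 \end{pmatrix} \;=\; \diag(A_r,\dots,A_1) \;=\; \diag\bigl(A(u), A(qu), \dots, A(q^{r-1}u)\bigr),
\]
which after reordering the blocks of the direct sum (permuting diagonal blocks only changes the factor of automorphy up to an equivalence coming from a constant permutation matrix) yields $\diag(A(q^{r-1}u),\dots,A(qu),A(u))$, as claimed.

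Finally, since a block-diagonal factor of automorphy defines the direct sum of the bundles associated to its diagonal blocks (this is an immediate consequence of the constructions in Section~\ref{section:basic correspondence}, or alternatively of Theorem~\ref{funct} applied to $\oplus$), we conclude
\[
\pi_r^*\pi_{r*}E(A) \;\simeq\; \bigoplus_{i=0}^{r-1} E\bigl(A(q^i u)\bigr).
\]
There is no real obstacle here; the only thing to be careful about is the order in which the factors $B(q^i u)$ are multiplied in Theorem~\ref{pullback} versus the convention used in Lemma~\ref{simplelemma}, but any ambiguity only permutes the summands, which does not affect the isomorphism class of the direct sum.
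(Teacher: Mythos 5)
Your proof is correct and follows essentially the same route as the paper: apply Theorem~\ref{pushforward} to get the block factor of automorphy for $\pi_{r*}E(A)$, then Theorem~\ref{pullback} to express the pullback as the telescoped product, and finally evaluate that product with Lemma~\ref{simplelemma}. Your explicit remark that the ordering of the diagonal blocks only changes the factor of automorphy by a constant permutation matrix (hence does not affect the isomorphism class of the direct sum) is a small point the paper glosses over, but nothing of substance differs.
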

\begin{proof}
We know that $\pi^*_r\pi
_{r*}E(A)$ is defined by $B(r,u)$, where
\[
B(1,u)=
\begin{pmatrix}
0&I_{(r-1)n}\\
A&0
\end{pmatrix}.
\]
Therefore, using Lemma~\ref{simplelemma}, one obtains
\begin{align*}
B(r,u)=&
\begin{pmatrix}
0&I_{(r-1)n}\\
A(q^{r-1}u)&0
\end{pmatrix}
\dots
\begin{pmatrix}
0&I_{(r-1)n}\\
A(qu)&0
\end{pmatrix}
\begin{pmatrix}
0&I_{(r-1)n}\\
A(u)&0
\end{pmatrix}=\\
&\diag(A(q^{r-1}u),\dots, A(qu), A(u)),
\end{align*}
which completes the proof.
\end{proof}
\begin{cor}
Let $L\in \cal E(r,0)$, then $\pi^*_r\pi
_{r*}L=\bigoplus\limits_1^r L$.
\end{cor}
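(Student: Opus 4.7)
The plan is to combine the immediately preceding corollary with Theorem~\ref{degzerobundle} applied to the torus $E_{r\tau}$. The key observation is that degree-zero indecomposable bundles of rank $r$ admit representatives by \emph{constant} factors of automorphy, and for such representatives the direct sum appearing in the preceding corollary collapses.

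First, I would apply Theorem~\ref{degzerobundle} on the torus $E_{r\tau}$ (with $q$ replaced by $q^r$) to the given $L \in \cal E_{E_{r\tau}}(r,0)$. This produces a unique class $\bar a \in \bb C^*/\langle q^r\rangle$ such that $L \simeq E(A_r(a))$, where
\[
A_r(a)=\begin{pmatrix} a&1&&\\ &\ddots&\ddots&\\ &&a&1\\ &&&a \end{pmatrix}
\]
is the Jordan block of size $r$ with eigenvalue $a$. The essential point is that $A_r(a)$ is constant as a function of $u \in \bb C^*$.

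Next, I would feed $A = A_r(a)$ into the preceding corollary, which gives
\[
\pi_r^*\pi_{r*}L \;\simeq\; \bigoplus_{i=0}^{r-1} E\bigl(A_r(a)(q^i u)\bigr).
\]
Because $A_r(a)$ does not depend on $u$, we have $A_r(a)(q^i u) = A_r(a)$ for every $i$, so each summand equals $E(A_r(a)) \simeq L$. This yields $\pi_r^*\pi_{r*}L \simeq L^{\oplus r}$, as required.

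There is no real technical obstacle here: the substantial computations are already carried out in Theorems~\ref{pullback} and~\ref{pushforward} and their corollary, and the classification in Theorem~\ref{degzerobundle} supplies exactly the constant representative that makes the twisted summands $E(A(q^i u))$ all coincide. The only conceptual step is to notice that the hypothesis ``rank $r$'' matches the degree of the covering $\pi_r$ so that Theorem~\ref{degzerobundle} is applicable with the same $r$.
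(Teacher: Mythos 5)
Your proof is correct and follows exactly the paper's (very terse) argument: by Theorem~\ref{degzerobundle} the bundle $L$ is represented by a constant matrix, so the twisted summands $E(A(q^i u))$ in the preceding corollary all coincide with $L$. The only superfluous remark is the last one --- Theorem~\ref{degzerobundle} works for any rank, so nothing hinges on the rank matching the covering degree; that coincidence of notation is irrelevant to the argument.
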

\begin{proof}
Clear, since $L=E(A)$ for a constant matrix $A$ by
Theorem~\ref{degzerobundle}.
\end{proof}

Note that for a covering $\pi_r:E_{r\tau} \ra E_\tau$ the group of deck
transformations $\Deck(E_{r\tau}/E_\tau)$ can be
identified with the kernel $\Ker(\pi_r)$. But $\Ker \pi_r$ is cyclic
and equals $\{1,[q], \dots  [q]^{r-1} \}$, where $[q]$ is a
class of $q=e^{2\pi i\tau}$ in $E_{r\tau}$. Clearly
\[
[q]^*(E(A(u)))=E(A(qu)).
\] Therefore, we get one more corollary.
\begin{cor}
Let $\epsilon$ be a generator of $\Deck(E_{r\tau}/E_\tau)$. Then for a
vector bundle $E$ on $E_{r\tau}$ we have
\[
\pi^*_r\pi_{r*}E=E\oplus \epsilon^* E\oplus\dots\oplus
(\epsilon^{r-1})^* E.
\]
\end{cor}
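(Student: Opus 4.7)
The plan is to reduce this corollary to the previous one by expressing the action of deck transformations on the level of factors of automorphy. Since $E_{r\tau}$ is a complex torus, Theorem~\ref{tr:A(1)} tells us that any vector bundle $E$ on $E_{r\tau}$ can be written as $E(A)$ for some holomorphic $A:\bb C^*\ra \GL_n(\bb C)$. Then the previous corollary provides
\[
\pi_r^*\pi_{r*}E \simeq \bigoplus_{i=0}^{r-1} E(A(q^i u)).
\]
So the task reduces to identifying, for each $i$, the summand $E(A(q^i u))$ with $(\epsilon^i)^*E$, where $\epsilon$ is a generator of $\Deck(E_{r\tau}/E_\tau)$.

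First I would identify the deck transformation group explicitly. As remarked in the paper, $\Deck(E_{r\tau}/E_\tau)$ is identified with $\Ker(\pi_r)$, which is cyclic of order $r$ generated by $[q]$ (the class of $q=e^{2\pi i\tau}$ in $E_{r\tau}$). Hence I can take $\epsilon$ to be translation by $[q]$ (any other generator would work after reindexing the direct summands, since cyclic permutation of summands does not change the isomorphism class of the direct sum). Next, the key computation is that the pull-back of $E(A(u))$ along translation by $[q]$ is $E(A(qu))$, which is already stated in the paper just before the corollary. By iterating, $(\epsilon^i)^*E(A(u)) = E(A(q^i u))$ for $0\le i\le r-1$.

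Putting this together, the previous corollary rewrites as
\[
\pi_r^*\pi_{r*}E \simeq \bigoplus_{i=0}^{r-1}(\epsilon^i)^*E = E\oplus \epsilon^*E\oplus \cdots\oplus (\epsilon^{r-1})^*E,
\]
as required. The only non-routine point is the identification $[q]^*E(A(u)) = E(A(qu))$, which has already been justified in the text; everything else is bookkeeping. Thus no serious obstacle remains, and the corollary follows immediately from the preceding one together with the explicit description of the deck group action.
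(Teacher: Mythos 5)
Your argument is correct and is exactly the one the paper intends: the corollary is stated immediately after the identification of $\Deck(E_{r\tau}/E_\tau)$ with $\Ker(\pi_r)=\{1,[q],\dots,[q]^{r-1}\}$ and the observation that $[q]^*(E(A(u)))=E(A(qu))$, and it follows from the preceding corollary by iterating that pull-back, just as you do. The remark that a different choice of generator only permutes the summands is a sensible extra precaution, but otherwise there is nothing to add.
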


To proceed we need the following result from~\cite{Oda}(Theorem 1.2, (i)):
\begin{tr*}
Let $\varphi: Y\ra X$ be an isogeny of $g$-dimensional abelian
varieties over a field $k$, and let $L$ be a line bundle on $Y$ such
that the restriction of the map
\[
\Lambda(L): Y\ra \Pic^0(Y),\quad y\mto t_y^*L\ten L^{-1},
\]
to the kernel of
$\varphi$ is an isomorphism. Then $\End(\varphi_* L)=k$ and $\varphi_*L$ is an indecomposable
vector bundle on $X$.
\end{tr*}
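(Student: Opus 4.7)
The plan is to reduce $\End(\varphi_*L)$ to a sum of cohomology groups on $Y$ via adjunction plus the push--pull decomposition already used in the paper, and then to kill all but one summand using the hypothesis on $\Lambda(L)$ together with the vanishing of $H^0$ for nontrivial degree--zero line bundles on an abelian variety.

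Concretely, I would start with the adjunction
\[
\End(\varphi_*L)=\Hom_X(\varphi_*L,\varphi_*L)\cong \Hom_Y(\varphi^*\varphi_*L,\,L).
\]
Because $\varphi$ is an isogeny, it is a finite \'etale Galois covering with Galois group $K\defeq \Ker\varphi$ acting on $Y$ by translations. The same push--pull computation that yielded $\pi_r^*\pi_{r*}E\cong \bigoplus_{i=0}^{r-1}(\epsilon^{i})^*E$ in the corollary just above applies here and gives
\[
\varphi^*\varphi_*L\cong \bigoplus_{y\in K} t_y^*L.
\]
Substituting and using that $\Hom_Y(t_y^*L,L)=H^0(Y,L\ten (t_y^*L)^{-1})$, one obtains
\[
\End(\varphi_*L)\cong \bigoplus_{y\in K} H^0\bigl(Y,\,L\ten (t_y^*L)^{-1}\bigr).
\]

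Next, observe that $L\ten (t_y^*L)^{-1}=\Lambda(L)(y)^{-1}\in \Pic^0(Y)$. For $y=0$ this is $\cal O_Y$ and contributes $H^0(Y,\cal O_Y)=k$. For any nonzero $y\in K$, the hypothesis that $\Lambda(L)|_K$ is an isomorphism (in particular injective) implies $\Lambda(L)(y)\ne 0$, so $M=L\ten(t_y^*L)^{-1}$ is a nontrivial element of $\Pic^0(Y)$. Invoking the standard vanishing $H^0(Y,M)=0$ for any $0\ne M\in\Pic^0(Y)$ on an abelian variety (e.g.\ via the Mumford theta group / seesaw argument, or Serre duality combined with the fact that $M$ has no global sections iff $M^{-1}$ has none), all these summands die. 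Hence $\End(\varphi_*L)=k$.

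Indecomposability is then automatic: if $\varphi_*L=E_1\oplus E_2$ with both summands nonzero, the two projectors would be $k$-linearly independent endomorphisms, contradicting $\dim_k\End(\varphi_*L)=1$. The main obstacle I expect is giving a clean derivation of $\varphi^*\varphi_*L\cong \bigoplus_{y\in K} t_y^*L$ in the generality of an arbitrary isogeny of abelian varieties over a field $k$: on the elliptic curve side this was established via factors of automorphy, but in general one must invoke flat base change along the Cartesian square $Y\times_X Y\cong K\times Y$ (which uses that $\varphi$ is a $K$-torsor) combined with the projection formula $\varphi_*L\ten \varphi_*\cal O_X\cong \varphi_*(L\ten \varphi^*\varphi_*\cal O_X)$. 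The only other point requiring care is the citation of the $H^0$-vanishing, which is classical but deserves an explicit reference.
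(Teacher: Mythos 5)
The paper does not actually prove this statement: it is quoted verbatim from Oda (Theorem 1.2(i)) and used as a black box, so there is no in-paper proof to compare against. Your argument is the standard one (essentially Oda's own), and it is correct in the only setting where the paper invokes it, namely separable isogenies of complex tori: adjunction gives $\End(\varphi_*L)\cong\Hom_Y(\varphi^*\varphi_*L,L)$, the torsor/flat-base-change decomposition gives $\varphi^*\varphi_*L\cong\bigoplus_{y\in K}t_y^*L$, and each summand $\Hom_Y(t_y^*L,L)=H^0\bigl(Y,\Lambda(L)(y)^{-1}\bigr)$ vanishes for $y\neq 0$ because injectivity of $\Lambda(L)|_K$ makes $\Lambda(L)(y)$ a nontrivial element of $\Pic^0(Y)$, which on an abelian variety admits no nonzero global sections. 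The passage from $\End(\varphi_*L)=k$ to indecomposability via idempotents is also fine.

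The one genuine gap concerns the generality in which the statement is phrased (an arbitrary field $k$). Your key decomposition $\varphi^*\varphi_*L\cong\bigoplus_{y\in K}t_y^*L$ presupposes that $\Ker\varphi$ is a constant \'etale group scheme, i.e.\ that $\varphi$ is separable and its kernel consists of $k$-rational points. For an inseparable isogeny in characteristic $p$ the kernel is a non-reduced group scheme, $\varphi$ is not \'etale, and a sum over ``points of $K$'' does not compute $\varphi^*\varphi_*L$; one must instead argue with the group scheme $K$ and its Cartier dual (which is what Oda does, via the decomposition of $\varphi_*\cal O_Y$ into line bundles indexed by the dual of $K$), and the hypothesis that $\Lambda(L)|_{\Ker\varphi}$ is an isomorphism must then be read scheme-theoretically. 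Likewise, the $H^0$-vanishing for nontrivial elements of $\Pic^0$ should be cited explicitly (it is Mumford, \emph{Abelian Varieties}, \S 8, or follows from the seesaw argument you sketch). Since the present paper only ever applies the theorem to the coverings $\pi_r:E_{r\tau}\ra E_\tau$ over $\bb C$, where separability and rationality of the kernel are automatic, your proof fully suffices for the paper's purposes; it just does not establish the theorem in the stated generality.
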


\begin{tr}\label{Oda}
Let $L\in\cal E(1,d)$ and let $(r,d)=1$. Then $\pi_{r*}(L)\in \cal E(r,d)$.
\end{tr}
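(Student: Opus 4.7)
The plan is to invoke the theorem of Oda quoted immediately above the statement, applied to the isogeny $\pi_r : E_{r\tau} \to E_\tau$ of one-dimensional abelian varieties. I first dispose of the numerical invariants of $\pi_{r*}L$. Since $\pi_r$ is a finite étale map of degree $r$ and $L$ has rank one, $\pi_{r*}L$ is locally free of rank $r$ on $E_\tau$. For the degree, the étaleness of $\pi_r$ together with the projection formula gives $\chi(E_\tau,\pi_{r*}L)=\chi(E_{r\tau},L)$, and since both curves have genus one, Riemann--Roch reduces each Euler characteristic to the corresponding degree, so $\deg \pi_{r*}L = \deg L = d$.

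It remains to establish indecomposability, which is exactly what Oda's theorem delivers once one checks its hypothesis that $\Lambda(L)|_{\Ker \pi_r}$ is an isomorphism. The kernel $\Ker \pi_r$ is the cyclic subgroup $\langle [q]\rangle \simeq \bb Z/r\bb Z$ already exhibited in the discussion preceding the statement. Under the canonical identification $E_{r\tau}\simeq \Pic^0(E_{r\tau})$ arising from any choice of base point, the map $\Lambda(L):y\mapsto t_y^*L\otimes L^{-1}$ becomes multiplication by $\deg L = d$ on the group $E_{r\tau}$: this is a standard fact about line bundles on elliptic curves, verifiable directly for $L=[p_0]^d$ and extended to arbitrary $L$ of degree $d$ by noting that $\Lambda$ vanishes on degree-zero line bundles and is additive in $L$. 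Restricted to $\Ker\pi_r\simeq \bb Z/r\bb Z$, this is multiplication by $d$ on $\bb Z/r\bb Z$, which is an automorphism precisely because $(r,d)=1$.

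Applying Oda's theorem yields $\End(\pi_{r*}L)=\bb C$. A vector bundle whose endomorphism algebra is a field admits no nontrivial idempotent, hence is indecomposable. Combining this with the rank and degree computation gives $\pi_{r*}L\in \cal E(r,d)$, as required.

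The only genuinely nontrivial step is the verification of Oda's hypothesis: identifying $\Lambda(L)$ with multiplication by $\deg L$ on $E_{r\tau}$ and then observing that this multiplication restricts to an automorphism of the finite cyclic group $\Ker\pi_r$ when $(r,d)=1$. Everything else is routine degree bookkeeping via the projection formula and Riemann--Roch, together with a direct quotation of the Oda statement.
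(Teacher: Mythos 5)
Your proof is correct and follows essentially the same route as the paper: both reduce the statement to Oda's theorem and verify its hypothesis by identifying $\Lambda(L)$ with multiplication by $d$ on $E_{r\tau}$, which restricts to an automorphism of $\Ker\pi_r\simeq\bb Z/r\bb Z$ exactly when $(r,d)=1$. The only (immaterial) difference is that the paper establishes $\Lambda(L)=d_Y$ by an explicit computation with the factors of automorphy $\varphi_0$ and constants, whereas you quote it as a standard fact and sketch the additivity argument; both are fine.
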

\begin{proof}
It is clear that $\pi_{r*}L$ has rank $r$ and degree $d$. It remains to prove that $\pi_{r*}L$ is
indecomposable.

We have the isogeny $\pi_r:E_{r\tau}\ra E_r$. Since $Y=E_{r\tau}$ is a
complex torus (elliptic curve), $Y\simeq\Pic^0(Y)$ with the identification
$y\leftrightarrow t_y^*E(\varphi_0)\ten E(\varphi_0)^{-1}$. We know that $L=E(\varphi_0)^d\ten \tilde L$ for some $\tilde
L=E(a)\in \cal E(1,0)$, $a\in \bb C^*$. Since $t_y^*(\tilde
L)=t_y^*(E(a))=E(a)=\tilde L$, as in the proof of
Theorem~\ref{trivbundleisconst} one gets
\begin{align*}
\Lambda(L)(y)=&t_y^*(L)\ten L^{-1}=t_y^*(E(\varphi_0)^d\ten \tilde L)\ten
(E(\varphi_0)^d\ten \tilde L)^{-1}=\\
&t_y^*(E(\varphi_0)^d)\ten t_y^*(\tilde L)\ten E(\varphi_0)^{-d}\ten \tilde
L^{-1}=t_y^*(E(\varphi_0)^d)\ten E(\varphi_0)^{-d}=\\
&t_y^*(E(\varphi_0^d)(z))\ten
E(\varphi_0^{-d})=E(\varphi_0^d(z+\eta))\ten
E(\varphi_0^{-d})=\\
&E(\varphi_0^d(z+\eta)\varphi_0^{-d}(z))=E(\exp(-2\pi i d
\eta))=t_{dy}^*(E(\varphi_0))\ten E(\varphi_0)^{-1},
\end{align*}
where $\eta\in \bb C$ is a representative of $y$. This means that the
map $\Lambda(L)$ corresponds to the map
\[
d_Y:E_{r\tau}\ra E_{r\tau}, \quad y\mto dy.
\]
Since  $\Ker\pi_r$ is isomorphic to $\bb Z/r\bb Z$, we
conclude that the restriction of $d_Y$ to $\Ker \pi_r$ is an
isomorphism if and only if $(r,d)=1$. Therefore, using Theorem
mentioned above, we prove the required statement.
\end{proof}
Now we are able to prove the following main theorem:
\begin{tr}\label{main}
(i) Every indecomposable vector bundle $F\in \cal E_{E_\tau}(r,d)$ is of the
form $\pi_{r'*}(L'\ten F_h)$, where $(r,d)=h$, $r=r'h$, $d=d'h$,
$L'\in \cal E_{E_{r'\tau}}(1,d')$.

(ii) Every vector bundle of the form $\pi_{r'*}(L'\ten F_h)$, where
$L'$ and $r'$ are as above, is an
element of $\cal E_{E_\tau}(r,d)$.
\end{tr}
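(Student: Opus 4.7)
My plan is to prove part (ii) by combining the projection formula with Theorem~\ref{Oda}, and deduce part (i) from Atiyah's classification~\cite{Atiyah} together with a surjectivity statement for the push-forward of line bundles.

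For part (ii), rank and degree are straightforward: $L'\ten F_h$ has rank $h$ and degree $hd'$, and since $\pi_{r'}$ is a finite unramified cover of degree $r'$, push-forward multiplies rank by $r'$ and preserves degree (by Riemann--Roch on elliptic curves), producing rank $r'h=r$ and degree $hd'=d$. The nontrivial point is indecomposability. I first establish $\pi_{r'}^*(F_h^{E_\tau})\simeq F_h^{E_{r'\tau}}$: by Theorem~\ref{pullback}, the left-hand side is defined on $E_{r'\tau}$ by the constant factor $A_h^{r'}$; writing $A_h=I+N$ with $N$ a single nilpotent block of size $h$, the binomial expansion shows that $A_h^{r'}$ is upper triangular with $1$s on the diagonal and entries $r'\neq 0$ on the superdiagonal, so by Lemma~\ref{samegord} it is equivalent to $A_h$. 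The projection formula then gives
\[
\pi_{r'*}(L'\ten F_h^{E_{r'\tau}})\simeq \pi_{r'*}\bigl(L'\ten \pi_{r'}^* F_h^{E_\tau}\bigr)\simeq \pi_{r'*}(L')\ten F_h^{E_\tau}.
\]
Since $\gcd(r',d')=1$, Theorem~\ref{Oda} gives $\pi_{r'*}(L')\in\cal E_{E_\tau}(r',d')$, and by Atiyah's result that tensoring with $F_h$ induces a bijection $\cal E(r',d')\to\cal E(r,d)$ (preserving indecomposability), the bundle $\pi_{r'*}(L')\ten F_h^{E_\tau}$ is indecomposable.

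For part (i), given an indecomposable $F\in\cal E_{E_\tau}(r,d)$, Atiyah's classification writes $F\simeq G\ten F_h$ for a unique indecomposable $G\in\cal E(r',d')$. The remaining task is to show the map $\Pi:\cal E_{E_{r'\tau}}(1,d')\to\cal E_{E_\tau}(r',d')$, $L'\mapsto\pi_{r'*}(L')$, is surjective; then $G\simeq\Pi(L')$ for some $L'$, and running the projection-formula computation from (ii) in reverse yields $F\simeq\pi_{r'*}(L'\ten F_h)$. For surjectivity, the corollary to Theorem~\ref{pushforward} gives $\pi_{r'}^*\pi_{r'*}(L')\simeq\bigoplus_{k=0}^{r'-1}(\epsilon^k)^*L'$, so by Krull--Schmidt the fibers of $\Pi$ are precisely the orbits of $\Deck(E_{r'\tau}/E_\tau)$ acting on $\Pic^{d'}(E_{r'\tau})$ by pull-back. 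The induced map $\Pic^{d'}(E_{r'\tau})/\Deck\to\cal E_{E_\tau}(r',d')$ is therefore injective; since the source is isomorphic to $E_\tau$ (via $\pi_{r'}$) and the target is also naturally parametrized by $E_\tau$ (by Atiyah's classification), we obtain an injective holomorphic map between elliptic curves, which must be surjective.

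\textbf{Main obstacle.} The principal difficulty is the surjectivity of $\Pi$ in part (i): it requires a clean identification of Atiyah's parametrization of $\cal E(r',d')$ as a moduli space isomorphic to $E_\tau$ and a check that the map induced by $\Pi$ on these moduli is holomorphic. The auxiliary identification $\pi_{r'}^*(F_h^{E_\tau})\simeq F_h^{E_{r'\tau}}$ used in part (ii) is an elementary Jordan-form computation, but is the other ingredient without which the projection-formula reduction would not close.
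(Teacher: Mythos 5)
Your part (ii) is correct and follows essentially the same route as the paper: the identification $\pi_{r'}^*(F_h)\simeq F_h$ via the Jordan form of the constant matrix $A_h^{r'}$ (Theorem~\ref{pullback} plus Lemma~\ref{samegord}), the projection formula to extract $\pi_{r'*}(L')\ten F_h$, Theorem~\ref{Oda} for indecomposability of $\pi_{r'*}(L')$, and Atiyah's lemmas to pass from $\cal E(r',d')$ to $\cal E(r,d)$ by tensoring with $F_h$.

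In part (i), however, your surjectivity argument for $\Pi: \cal E_{E_{r'\tau}}(1,d')\to \cal E_{E_\tau}(r',d')$ has a genuine gap, which you yourself flag: the step ``an injective holomorphic map between elliptic curves must be surjective'' requires knowing that the induced map $\Pic^{d'}(E_{r'\tau})/\Deck \to \cal E_{E_\tau}(r',d')$ is a \emph{holomorphic} (or algebraic) map with respect to a variety structure on the target, and neither that structure nor the holomorphy is established. Atiyah's bijections $\cal E(r',d')\leftrightarrow \cal E(1,0)$ are a priori set-theoretic, so the dimension-count conclusion does not yet follow. The gap is fillable, but there is a much shorter route that you already have all the ingredients for, and it is the one the paper takes: fix \emph{any} $\tilde L\in\cal E_{E_{r'\tau}}(1,d')$; by Theorem~\ref{Oda}, $\pi_{r'*}(\tilde L)\in\cal E(r',d')$, and by Atiyah's Lemma~26 the action of $\Pic^0(E_\tau)$ by tensoring is transitive on $\cal E(r',d')$, so any given $G\in\cal E(r',d')$ satisfies $G\simeq \pi_{r'*}(\tilde L)\ten L''$ for some degree-zero $L''$; the projection formula then absorbs $L''$ as $G\simeq\pi_{r'*}(\tilde L\ten\pi_{r'}^*(L''))$, with $\tilde L\ten\pi_{r'}^*(L'')$ of degree $d'$. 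This replaces your moduli-theoretic surjectivity argument by a purely bundle-theoretic one and closes part (i) immediately. Your observation that the fibers of $\Pi$ are exactly the $\Deck$-orbits (via $\pi_{r'}^*\pi_{r'*}$ and Krull--Schmidt) is correct but is not needed for the theorem as stated.
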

\begin{proof}\footnote{The proof of this theorem uses the ideas from
    lectures presented by Bernd Kreu\ss ler at the University of Kaiserslautern.}
(i) By \cite[Lemma~26]{Atiyah} we obtain $F\simeq E_A(r,d)\ten L$ for some
line bundle $L\in \cal E(1,0)$. By \cite[Lemma~24]{Atiyah} we have
$E_A(r,d)\simeq E_A(r',d')\ten F_h$, hence $F\simeq
E_A(r',d')\ten F_h\ten L$.

Consider any line bundle $\tilde L\in \cal E_{E_{r'\tau}}(1,d')$.
Since by Theorem~\ref{Oda} $\pi_{r'*}(\tilde L)\in \cal E(r',
d')$, it follows from \cite[Lemma~26]{Atiyah}  that there exists a line
bundle $L''$ such that $E_A(r',d')\ten L\simeq \pi_{r'*}(\tilde L)\ten
L''$.

Using the projection formula, we get
\begin{align*}
F\simeq \pi_{r'*}(\tilde L)\ten
L''\ten F_h\simeq \pi_{r'*}(\tilde L\ten \pi^*_{r'}(L'')\ten
\pi^*_{r'}(F_h))\simeq \pi_{r'*}(L' \ten \pi^*_{r'}(F_h))
\end{align*}
 for $L'=\tilde
L\ten \pi^*_{r'}(L'')$.

 Since $F_h$ is defined by a constant matrix we
obtain by Theorem~\ref{pullback} that  $\pi^*_{r'}(F_h)$ is defined
by $f_h^{r'}$, which is has the same Jordan normal form as
$f_h$. Therefore, $\pi^*_{r'}(F_h)\simeq F_h$ and  finally
one gets $F \simeq \pi_{r'*}(L'\ten F_h)$.

(ii) Consider $F=\pi_{r'*}(L'\ten F_h)$. As above
$F_h=\pi_{r'}^*(F_h)$. Using the projection formula  we get
\[
F=\pi_{r'*}(L'\ten F_h) = \pi_{r'*}(L'\ten \pi_{r'}^*(F_h)) =
\pi_{r'*}(L')\ten F_h.
\]
By Theorem~\ref{Oda} $\pi_{r'*}(L')$ is an element from $\cal
E_{E_\tau}(r', d')$. Therefore, $\pi_{r'*}(L')=E_{A}(r',d')\ten L$ for
some line bundle $L\in \cal E_{E_\tau}(1,0)$. Finally we obtain
\[
F=\pi_{r'*}(L')\ten F_h=E_{A}(r',d')\ten L\ten F_h=E_A(r'h,d'h)\ten
L=E_A(r,d)\ten L,
\]
 which means that $F$ is an element of $\cal E_{E_\tau}(r,d)$.
\end{proof}

\begin{rem}
Since any line bundle of degree $d'$ is of the form
$t_x^*E(\varphi_0)\ten E(\varphi_0)^{d'-1}$, Theorem~\ref{main}(i) takes
exactly the form of Proposition 1 from~\cite{Pol}.
\end{rem}

Any line bundle of degree $d'$ over $E_{r\tau}$ is of the form
$E(a)\ten E(\varphi^{d'})$, where $a\in \bb C^*$. Therefore,
$L'\ten F_h=E(a)\ten E(\varphi_0^{d'})\ten
E(A_h)=E(\varphi_0^{d'}A_h(a))$. Using Theorem~\ref{pushforward} we
obtain the following:
\begin{tr}\label{classmatr}
Indecomposable vector bundles of rank $r$ and degree $d$
on $E_{\tau}$ are exactly those defined by the matrices
\[
\begin{pmatrix}
0&I_{(r'-1)h}\\
\varphi_0^{d'}A_h(a)&0
\end{pmatrix},
\]
where $(r,d)=h$, $r'=r/h$, $d'=d/h$,
$\varphi_0(u)=q^{-\frac{r}{2}}u^{-1}$, $q=e^{2\pi i \tau}$, $a\in \bb
C^*$, and
\[
A_h(a)=
\begin{pmatrix}
a&1&\\
&\ddots&\ddots\\
&&a&1\\
&&&a
\end{pmatrix}\in \GL_h(\bb C).
\]
\end{tr}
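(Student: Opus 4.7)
The plan is to combine Theorem~\ref{main} with the explicit description of line bundles on $E_{r'\tau}$ and the pushforward formula from Theorem~\ref{pushforward}. First I would invoke Theorem~\ref{main}(i) to write any indecomposable $F\in \cal E_{E_\tau}(r,d)$ in the form $F\simeq \pi_{r'*}(L'\ten F_h)$, where $h=(r,d)$, $r=r'h$, $d=d'h$, and $L'\in \cal E_{E_{r'\tau}}(1,d')$. The reverse direction of the theorem is then automatic from Theorem~\ref{main}(ii).

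Next I would compute a factor of automorphy representing $L'\ten F_h$ on $E_{r'\tau}$. By the discussion preceding the theorem statement, every line bundle of degree $d'$ on $E_{r'\tau}$ is isomorphic to $E(\varphi_0^{d'})\ten E(a)=E(a\varphi_0^{d'})$ for some $a\in \bb C^*$ (with $a$ unique modulo $\langle q^{r'}\rangle$ by Theorem~\ref{trivbundleisconst}). Since $F_h=E(A_h)$ where $A_h$ is the Jordan block with eigenvalue $1$ and $1$'s on the super-diagonal, Corollary~\ref{tensor} gives
\[
L'\ten F_h \simeq E(a\varphi_0^{d'}\cdot A_h) \simeq E(\varphi_0^{d'}\cdot A_h(a)),
\]
where the last equivalence uses that the constant matrix $a\cdot A_h$ is similar to the Jordan block $A_h(a)$ via a constant change of basis (and conjugation by a constant matrix is an equivalence of factors of automorphy).

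Finally, I would apply Theorem~\ref{pushforward} to this factor of automorphy: the pushforward $\pi_{r'*}E(\tilde A)$ of a rank-$h$ bundle on $E_{r'\tau}$ defined by $\tilde A$ is the rank-$r'h=r$ bundle on $E_\tau$ defined by
\[
\begin{pmatrix} 0 & I_{(r'-1)h} \\ \tilde A(u) & 0 \end{pmatrix}.
\]
Substituting $\tilde A(u)=\varphi_0^{d'}(u)A_h(a)$ yields precisely the matrix in the statement. Combined with the converse direction from Theorem~\ref{main}(ii), this matches indecomposable bundles in $\cal E_{E_\tau}(r,d)$ with matrices of the prescribed form.

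The bookkeeping I expect to have to watch carefully is which version of $\varphi_0$ is intended: on $E_{r'\tau}$ the natural theta-type factor is $\varphi_0(u)=(q^{r'})^{-1/2}u^{-1}$ rather than $q^{-r/2}u^{-1}$, and one must check that the ambiguity between these is absorbed into the free parameter $a\in \bb C^*$ (indeed, multiplying $\varphi_0^{d'}$ by a constant amounts to rescaling $a$ by that same constant). The main conceptual step is recognising that all the earlier machinery — factors of automorphy for line bundles of prescribed degree, tensor products corresponding to products of factors, and the explicit pushforward matrix — fit together with no further input beyond Theorem~\ref{main}.
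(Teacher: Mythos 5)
Your proposal is correct and follows essentially the same route as the paper: invoke Theorem~\ref{main}, write $L'\ten F_h$ as $E(\varphi_0^{d'}A_h(a))$ using the classification of line bundles and the tensor-product correspondence for factors of automorphy, and then apply Theorem~\ref{pushforward} to produce the block matrix. Your remark about the normalisation of $\varphi_0$ on $E_{r'\tau}$ being absorbed into the parameter $a$ is a sensible point of care that the paper glosses over.
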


Note that if $d=0$, we get $h=r$, $r'=1$, and $d'=0$. In this case the
statement of Theorem~\ref{classmatr} is exactly Theorem~\ref{degzerobundle}.

\def\cprime{$'$} \def\cprime{$'$}

\end{document}